\begin{document}

\title[Structures and lower bounds for binary covering arrays]
{Structures and lower bounds for  binary covering arrays}
\author[Soohak Choi]
{Soohak Choi}
\author[Hyun Kwang Kim]
{Hyun Kwang Kim}
\address{(Choi, Kim) Department of Mathematics, Pohang University of Science and Technology,
Pohang 790-784, Republic of Korea.} \email{misb@postech.ac.kr, hkkim@postech.ac.kr}
\author[Dong Yeol Oh]
{Dong Yeol Oh}
\address{(Oh) Division of Liberal Arts, College of Humanities and Sciences, Hanbat National University, Daejeon 305-719, Republic of Korea.} \email{dyoh@postech.ac.kr}

%
\thanks{The second author's work was supported by Basic Science Research Program through the National Research Foundation of Korea(NRF) funded by the Ministry of Education, Science and Technology (KRF 2008-314-C00007).}
\thanks{Key Words : Covering arrays, Erd\"{o}s-Ko-Rado theorem, Roux's bound.}


\newtheorem{theorem}{Theorem}[section]
\newtheorem{lemma}[theorem]{Lemma}
\newtheorem{definition}[theorem]{Definition}
\newtheorem{cor}[theorem]{Corollary}
\newtheorem{proposition}[theorem]{Proposition}
\newtheorem{example}[theorem]{Example}
\newtheorem{remark}[theorem]{Remark}

\def\LI{\langle}
\def\RI{\rangle}
\def\supp{\mbox{supp}}
\def\Aut{\mbox{Aut}}
\def\Iso{\mbox{Iso}}
\newtheorem{pppp}{Proof}
\renewcommand{\thepppp}{\hspace{-5pt}}
\newenvironment{pf}{\begin{pppp} \em}{\mbox{}\hfill\qed\end{pppp}}



\begin{abstract}
A $q$-ary $t$-covering array is an $m \times n$ matrix with entries from $\{0, 1, \ldots, q-1\}$ with the property that for any $t$ column positions, all $q^t$ possible vectors of length $t$ occur at least once. One wishes to minimize $m$ for given $t$ and $n$, or maximize $n$ for given $t$ and $m$. For $t = 2$ and $q = 2$, it is completely solved by R\'{e}nyi, Katona, and Kleitman and Spencer. They also show that maximal binary $2$-covering arrays are uniquely determined. Roux found the lower bound of $m$ for a general $t, n$, and $q$. In this article, we show that $m \times n$ binary $2$-covering arrays under some constraints on $m$ and $n$ come from the maximal covering arrays. We also improve the lower bound of Roux for $t = 3$ and $q = 2$, and show that some binary $3$ or $4$-covering arrays are uniquely determined.

\end{abstract}

\maketitle

\section{Introduction.}
Let $B_q=\{0,1,\ldots,q-1\}$ be a set with $q$ elements. An $m\times n$
matrix $C$ over $B_q$ is called a $t$-covering array (or a covering
array of size $m$, strength $t$, degree $n$, and order $q$) if for any $t$
columns of $C$, all $q^t$ possible $q$-ary vectors of length $t$ occur at least
once. Such an array will be denoted by $CA(m;t,n,q)$.

The problem
is to minimize $m$ for which a $CA(m;t,n,q)$ exists for given values of $q, n$, and $t$, or equivalently to maximize $n$ for which a $CA(m;t,n,q)$ exists for given values of $q, m$, and $t$. Such a minimal size $m$ and a maximal degree $n$ are denoted by $CAN(t,n,q)$ and  $\overline{CAN}(t,m,q)$, respectively.
%
%
For fixed $t$ and $n$, a $t$-covering array of degree $n$ with minimal size
$m=CAN(t,n,q)$ is called optimal.

The problem was completely solved only for the case $t = q = 2$ by R\`{e}nyi \cite{Re} (for $m$ even), and independently Katona \cite{Kat}, and Kleitman and Spencer \cite{Kle} (for all $m$): the answer is that for any $m$, the maximal degree of a binary $2$-covering array is
$$\overline{CAN(2,m,2)} = {m - 1\choose \lfloor \frac{m}{2} \rfloor - 1 }.$$
Such an array with maximal degree is called a maximal covering array. Moreover, Katona \cite{Kat} proved that maximal binary covering arrays of strength $2$ are uniquely determined up to equivalence.

For a higher strength $t \geq 3$ or a higher order $q \geq 3$, the problem becomes more difficult. For examples, when $t = 2$ and $q = 3$, $CAN(2,n,3)$ is known only for $n \leq 5$. (See \cite{Slo}.) For a general $t, n$, and $q$, Roux \cite{Rou} introduced two useful bounds of $CAN(t,n,q)$.

Covering arrays have wide applications in combinatorial sciences such as circuit testing, intersecting codes, data compression, and so on. See \cite{Bie, Boy, Bra, Car, Coh, Coh2, Coh3}, \cite{Gal}, \cite{Nao}, \cite{Slo, Ste, Ste2, Ton}.


%

We are interested in the structures of binary optimal $2$ or $3$-covering arrays and the lower bound of $CAN(t,n,q)$.

Let $C$ be an $m \times n$ $q$-ary $t$-covering array. It is easy to see that if we permute
the rows and columns of $C$ or invert the values of any column of $C$ by a permutation of $B_{q}$, then the resulting matrix is also a $t$-covering array. This motivates two covering arrays $C$ and $C'$ are equivalent if one can be
transformed into the other by a series of operations of the
following types:\\
(a) permutation of the rows;\\
(b) permutation of the columns;\\
(c) inversion of the values of any column by a permutation of $B_{q}$.

Johnson and Entringer \cite{Joh} showed that $CAN(n-2,n,2)= \lfloor \frac{2^{n}}{3} \rfloor$ and that the corresponding covering array is unique. Colbourn et al. \cite{Col} classified the number of inequivalent covering arrays for up to degree $10$, order $8$, and all possible strengths by a computer search.

The purpose of this article is to classify the structures of some optimal binary $2$-covering arrays, and to improve the lower bound of Roux on $CAN(t,n,q)$  when $t = 3, q = 2$.

In Section 3, we will show that when $n > {{{m-1}\choose{\lfloor \frac{m}{2}\rfloor-1}}+m-3{\lfloor \frac{m}{2}\rfloor}}$, binary optimal $2$-covering arrays of size $m$ and degree $n$ is obtained from the maximal $2$-covering of size $m$ by deleting some columns by using a combinatorial approach.

In Section 4, we will improve the lower bound of Roux on $CAN(3,n,2)$  when $n > {{{m-1}\choose{\lfloor \frac{m}{2}\rfloor-1}}+m-3{\lfloor \frac{m}{2}\rfloor}}$ and $m \geq 7$.

In Section 5,  we will show that $10 \times 5, 12 \times 11$ binary optimal $3$-covering and $24 \times 12$ binary optimal $4$-covering arrays are unique by using the results in Section 3. The results in Section 5, except Theorem \ref{Thm:48by12}, are already known in Colbourn et al. \cite{Col}; they found these result by a computer search.

\section{Preliminaries}

In this section, we will introduce some definitions and basic concepts which are needed in the sequel.

For $u=(u_1,u_2,\ldots,u_n) \in B_q^n$, the support supp$(u)$ and the weight $wt(u)$ of $u$ are defined to be
$$
\mbox{supp}(u)=\{i \bigm | u_i \neq 0\} \mbox{ and } wt(u)=|\mbox{supp}(u)|.
$$
Let $C$ be an $m \times n$ matrix over $B_q$. We denote $c^i$ and $r^j$ by $i$-th column and $j$-th row of $C$, respectively.

When $q=2$, we sometimes consider $u \in B_2^n$ as a subset of $[n]=\{1,\ldots,n\}$ by identifying a binary vector with its support.
The complement $\overline{u}$ of $u \in B_2^n$ is defined by
$$
\overline{u}_i=\left\{
            \begin{array}{ll}
              1, & \ \mbox{if} \ u_i=0; \\
              0, & \ \mbox{if} \ u_i=1.
            \end{array}
          \right.
$$
For a matrix $C$ over $B_2$, the complement $\overline{C}$ of a matrix $C$ is defined by
$$
\overline{c}_{ij}=\left\{
            \begin{array}{ll}
              1, & \ \mbox{if} \ c_{ij}=0; \\
              0, & \ \mbox{if} \ c_{ij}=1.
            \end{array}
          \right.
$$
An $m \times n$ matrix $C$ over $B_2$ is a $t$-covering array if for any $t$
columns $c^{i_1},c^{i_2},\ldots,c^{i_t}$ of $C$, $\bigcap_{j=1}^{t} X_j \neq \emptyset$, where $X_j$ is either $\mbox{supp}(c^{i_j})$ or $\mbox{supp}(\overline{c^{i_j}})$.

Kapralov \cite{Kap} introduced a residual matrix which is useful to study the structures of covering arrays.
\begin{definition}
Let $C$ be a matrix over $B_q$. Let $c^{i_{1}},c^{i_{2}},\ldots,c^{i_{k}}$ be different columns of a matrix $C$. The residual matrix
$Res(C;c^{i_{1}}=v_1,c^{i_{2}}=v_2,\ldots,c^{i_{k}}=v_k)$ is the submatrix of $C$ obtained by the following way$:$ take all the rows in which $C$ has
value $v_j$ in the column $c^{i_{j}}$ for $j=1,2,\ldots,k$ and delete the
columns $c^{i_{1}},c^{i_{2}},\ldots,c^{i_{k}}$ in the selected rows.
\end{definition}

From the definition of the residual matrix, the followings can be easily obtained.
\begin{proposition}{\label{Prop:residual}}
Let $C$ be a $t$-covering array over $B_q$. For $k < t$, the residual matrix
$Res(C;c^{i_{1}}=v_1,c^{i_{2}}=v_2,\ldots,c^{i_{k}}=v_k)$ of $C$ is a $(t-k)$-covering array over $B_q$.
\end{proposition}

\begin{proposition}{\label{Prop:weight}}
Let $C$ be an $m \times n$ $t$-covering array over $B_q$. Then for any $1 \leq i \leq n$, the weight $wt(c^i)$ of $i$-th column of $C$
satisfies
$$
(q-1)CAN(t-1,n-1,q) \leq wt(c) \leq m - CAN(t-1,n-1,q).
$$
\end{proposition}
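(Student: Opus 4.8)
The plan is to reduce everything to Proposition \ref{Prop:residual} applied to single-column restrictions of $C$. Fix a column index $i$ with $1 \leq i \leq n$. For each value $v \in B_q$, write $m_v = |\{\, j : c^i_j = v \,\}|$ for the number of rows of $C$ in which the $i$-th column takes the value $v$. Then $\sum_{v \in B_q} m_v = m$ and, by definition of the weight, $wt(c^i) = \sum_{v \neq 0} m_v = m - m_0$.

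The key observation is that the residual matrix $Res(C; c^i = v)$ has exactly $m_v$ rows and exactly $n-1$ columns (one column is deleted), and by Proposition \ref{Prop:residual}, applied with $k = 1 < t$, it is a $(t-1)$-covering array over $B_q$ of degree $n-1$. Since $CAN(t-1,n-1,q)$ is by definition the minimum number of rows of any such array, this forces $m_v \geq CAN(t-1,n-1,q)$ for every $v \in B_q$. Summing this over the $q-1$ nonzero values $v$ gives the lower bound $wt(c^i) = \sum_{v \neq 0} m_v \geq (q-1)\,CAN(t-1,n-1,q)$, and taking $v = 0$ gives $m - wt(c^i) = m_0 \geq CAN(t-1,n-1,q)$, i.e. $wt(c^i) \leq m - CAN(t-1,n-1,q)$, which is the upper bound.

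There is no genuine obstacle; the only point that needs a line of care is the bookkeeping that deleting the $i$-th column leaves degree $n-1$ and that Proposition \ref{Prop:residual} supplies the full strength $t-1$, so that the minimality built into the definition of $CAN(t-1,n-1,q)$ is applicable to every $Res(C; c^i = v)$ simultaneously. I would also note in passing that combining the two estimates yields $m \geq q\cdot CAN(t-1,n-1,q)$, which is consistent with the Roux-type lower bounds and is implicitly required for the statement to be non-vacuous.
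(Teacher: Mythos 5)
Your proof is correct; the paper states this proposition without proof (as an immediate consequence of the residual-matrix construction), and your argument via the row counts $m_v$ of the residual matrices $Res(C;c^i=v)$ together with Proposition~\ref{Prop:residual} is exactly the intended one. The only implicit hypothesis worth flagging is $t\geq 2$, so that $k=1<t$ makes Proposition~\ref{Prop:residual} applicable.
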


For $u, v \in B_q^n$, the distance $d(u,v)$ of $u$ and $v$ is defined to be
$$
d(u,v)=|\{i \bigm | u_i \neq v_i\}|.
$$
For an $m \times n$ matrix $C$ over $B_{q}$, the set $R(C)$  is defined to be
$$
R(C)=\{v_{i} \bigm | 1 \leq i \leq m \},
$$
where
$$
v_{i}=(e_{i}^{1},e_{i}^{2},\ldots,e_{i}^{n}) \mbox{ and } e_{i}^{j} = |\{k \bigm | d(r^{i},r^{k}) = j\}|.
$$
We call $R(C)$ row distance structure.

By using the definition of equivalence of covering arrays and row distance structure,
we can get a necessary condition for two covering arrays to be equivalent, which is useful to determine whether two covering arrays are equivalent
.

\begin{proposition}{\label{Prop:rowdistance}}
If $C$ and $C'$ are equivalent $t$-covering arrays over $B_q$, then $R(C) = R(C')$.
\end{proposition}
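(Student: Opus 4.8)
The plan is to show that the row distance structure $R(C)$ is left unchanged by each of the three elementary operations (a), (b), (c) that generate the equivalence relation on covering arrays; since $C'$ is obtained from $C$ by a finite sequence of such operations, this immediately gives $R(C)=R(C')$. The common observation underlying all three cases is that none of these operations alters the Hamming distance $d(r^i,r^k)$ between any pair of rows of the array, while the vector $v_i=(e_i^1,\ldots,e_i^n)$ attached to the $i$-th row depends only on the collection of distances from that row to all rows of the array. (In fact the argument never uses that $C$ is a covering array; it works for an arbitrary matrix over $B_q$.)

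First I would treat operation (b), a permutation $\tau$ of the columns. This replaces each row $r^i$ by $r^i\circ\tau$, and since $r^i_j\neq r^k_j$ if and only if $(r^i\circ\tau)_j\neq(r^k\circ\tau)_j$ for the corresponding coordinate, we get $d(r^i\circ\tau,r^k\circ\tau)=d(r^i,r^k)$ for every pair $i,k$. Hence every $e_i^j$, and therefore every $v_i$, is unchanged, so $R(C)$ is literally the same set. Operation (c), applying a permutation $\pi$ of $B_q$ to a single column $\ell$, is handled the same way: it replaces $r^i_\ell$ by $\pi(r^i_\ell)$ and fixes all other entries, and because $\pi$ is a bijection we have $\pi(r^i_\ell)\neq\pi(r^k_\ell)$ exactly when $r^i_\ell\neq r^k_\ell$, so again $d$ is preserved on every pair of rows, hence so is each $v_i$ and hence $R(C)$. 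Applying (possibly different) permutations of $B_q$ to several columns is just a composition of such single-column steps.

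Finally, operation (a), a permutation $\sigma$ of the rows. Here the individual vectors do get relabelled: the new $i$-th row is $r^{\sigma^{-1}(i)}$, and a change of summation index shows that the vector computed from the reordered array for its $i$-th row equals the old $v_{\sigma^{-1}(i)}$. Since $R(C)$ is defined as the \emph{set} $\{v_i\mid 1\le i\le m\}$, this relabelling does not change it. Composing the three cases completes the proof.

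I do not expect any genuine obstacle. The only points needing a little care are the bookkeeping in case (a) — tracking how the index of the row attached to a given $v_i$ moves under $\sigma$ — and the fact that $R(C)$ is a \emph{set}, so the multiplicities with which the $v_i$ occur are irrelevant; one could equally well record the multiset of the $v_i$ and obtain a (formally stronger) invariant by the same argument, but this is not needed here.
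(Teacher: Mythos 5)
Your proof is correct, and it is exactly the routine verification the paper has in mind: the paper states Proposition~\ref{Prop:rowdistance} without proof as an immediate consequence of the definitions, and your case-by-case check that each of the three elementary operations preserves all pairwise row distances (and that row permutation merely relabels the $v_i$ within the set $R(C)$) fills in precisely that omitted argument. Your side remark that the multiset of the $v_i$ is likewise invariant is a valid, slightly stronger observation, but nothing in the paper requires it.
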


Now we will introduce a typical example of binary $2$-covering arrays.

\begin{definition}
The standard maximal binary $2$-covering array $C$ of size $m$ is an $m \times {{m-1}\choose{{\lfloor \frac{m}{2} \rfloor}-1}}$ matrix such that
\begin{enumerate}
\item the first row of $C$ is all $1$ row;
\item the columns of the remaining matrix is the family of all vectors of $({\lfloor \frac{m}{2} \rfloor -1})$ $1$'s and ${\lceil \frac{m}{2} \rceil}$ $0$'s.
\end{enumerate}
\end{definition}

From the definition of the standard maximal binary $2$-covering array, we can get the trivial lower bound of the degree of binary $2$-covering arrays of size $m$.
\begin{proposition}{\label{Prop:standard}}
For $m \geq 4$,
$$
\overline{CAN}(2,m,2) \geq {{m-1}\choose{{\lfloor \frac{m}{2}
\rfloor}-1}}.
$$
\end{proposition}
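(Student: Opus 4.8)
The plan is to take the statement at face value: it asserts the existence of a binary $2$-covering array of size $m$ and degree $\binom{m-1}{\lfloor m/2\rfloor-1}$, so it suffices to verify that the \emph{standard maximal binary $2$-covering array} $C$ of size $m$, as just defined, really is a $2$-covering array. Once that is checked, the inequality is immediate from the definition of $\overline{CAN}(2,m,2)$ as the maximal degree of such an array. Thus the whole proof is a direct verification of the covering condition for the explicit matrix in the definition, and there is essentially no ``obstacle'' — the only point where anything is actually used is a parity inequality, noted below.

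First I would reinterpret the columns set-theoretically. Each column $c^i$ of $C$ has a $1$ in the first row (which is all $1$'s), while its restriction to the last $m-1$ rows runs over all binary vectors with exactly $\lfloor m/2\rfloor-1$ ones. Hence, identifying $c^i$ with $A_i:=\supp(c^i)\subseteq[m]$, we have $1\in A_i$ and $|A_i|=\lfloor m/2\rfloor$ for every $i$, and distinct columns give distinct subsets (they already differ somewhere among the last $m-1$ coordinates). Next I would invoke the intersection description of binary $t$-covering arrays recorded just before the definition: it suffices to show that for any two columns with $A:=A_i\neq A_j=:B$, the four sets $A\cap B$, $A\cap\overline{B}$, $\overline{A}\cap B$, $\overline{A}\cap\overline{B}$ are all nonempty (these witness the patterns $(1,1),(1,0),(0,1),(0,0)$ respectively). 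Here $A\cap B\ni 1$; since $|A|=|B|$ and $A\neq B$, neither set contains the other, so $A\cap\overline{B}=A\setminus B$ and $\overline{A}\cap B=B\setminus A$ are nonempty; and $|A\cup B|=|A|+|B|-|A\cap B|\le 2\lfloor m/2\rfloor-1\le m-1<m$, so $\overline{A}\cap\overline{B}=\overline{A\cup B}\neq\emptyset$. Therefore every pair of columns of $C$ covers all four binary vectors of length $2$, i.e.\ $C$ is a $CA(m;2,\binom{m-1}{\lfloor m/2\rfloor-1},2)$, and the proposition follows.

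The one step that carries genuine content is the coverage of the all-zero pattern, $\overline{A}\cap\overline{B}\neq\emptyset$, which rests on the parity bound $2\lfloor m/2\rfloor\le m$ together with $|A\cap B|\ge 1$ (because $1\in A\cap B$). The hypothesis $m\ge 4$ is only needed to make the construction nondegenerate, ensuring $\lfloor m/2\rfloor-1\ge 1$ so that the described family of columns is nonempty and the matrix genuinely has the stated number of columns.
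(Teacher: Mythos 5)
Your proof is correct and follows exactly the route the paper intends: the paper simply asserts that the bound is ``trivial'' from the definition of the standard maximal binary $2$-covering array, and your argument supplies the verification that this explicit matrix really covers all four patterns $(1,1),(1,0),(0,1),(0,0)$ for every pair of columns. The key observations (the common element $1$ in all supports, incomparability of distinct equal-size sets, and the parity bound $2\lfloor m/2\rfloor-1\le m-1$) are precisely what is needed, so there is nothing to add.
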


We close this section by introducing a famous Hall's theorem. Let $G=(V,I)$ be a graph with a vertex set $V$ and an edge set $I$.
For a subset $S$ of $V$, let $\Gamma(S)$ be the set of neighborhoods of $S$ in $G$, i.e. the set of vertices adjacent to any element of $S$.

\begin{theorem}$($E. W. Hall 1935$)$\label{Thm:Hall}\\
Suppose we have a bipartite graph $G$ with two vertex sets $V_1$ and $V_2$. Suppose that
$$
|\Gamma(S)| \geq |S| \qquad \textrm{for every }S \subset V_{1}.
$$
Then $G$ contains a complete matching.
\end{theorem}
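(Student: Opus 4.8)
The plan is to prove the theorem by induction on $n=|V_1|$, with the inductive step resting on a dichotomy according to whether or not $G$ has a \emph{critical} set, meaning a nonempty proper subset $S\subsetneq V_1$ for which Hall's inequality is tight, $|\Gamma(S)|=|S|$. The base case $n=1$ is immediate: if $V_1=\{v\}$ then $|\Gamma(\{v\})|\geq 1$, so $v$ has a neighbor $w$, and $\{vw\}$ is a complete matching. For the inductive step, assume the theorem holds for every bipartite graph whose first vertex class has fewer than $n$ vertices, and let $G$ with $|V_1|=n\geq 2$ satisfy Hall's condition.

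\textbf{Case 1: $G$ has no critical set}, i.e.\ every nonempty proper $S\subsetneq V_1$ satisfies $|\Gamma(S)|\geq |S|+1$. Choose any $v\in V_1$ and any neighbor $w$ of $v$ (one exists since $|\Gamma(\{v\})|\geq 1$), and let $G'$ be obtained by deleting $v$ and $w$. For any nonempty $S\subseteq V_1\setminus\{v\}$ we have $|\Gamma_{G'}(S)|\geq |\Gamma_G(S)|-1\geq |S|$, since deleting $w$ can shrink a neighborhood by at most one; thus $G'$ satisfies Hall's condition and its first class has $n-1$ vertices, so by induction $G'$ has a complete matching, and adjoining $vw$ yields one for $G$.

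\textbf{Case 2: $G$ has a critical set $S_0$}, $\emptyset\neq S_0\subsetneq V_1$ with $|\Gamma(S_0)|=|S_0|$. Let $G_1$ be the subgraph induced on $S_0\cup\Gamma(S_0)$; for $S\subseteq S_0$ we have $\Gamma_{G_1}(S)=\Gamma_G(S)$, so Hall's condition holds in $G_1$, and since $|S_0|<n$ induction gives a matching $M_1$ saturating $S_0$. Let $G_2$ be the subgraph induced on $(V_1\setminus S_0)\cup(V_2\setminus\Gamma(S_0))$. For $S\subseteq V_1\setminus S_0$ one checks that $\Gamma_{G_2}(S)=\Gamma_G(S\cup S_0)\setminus\Gamma(S_0)$, so
$$|\Gamma_{G_2}(S)|=|\Gamma_G(S\cup S_0)|-|\Gamma(S_0)|\geq |S\cup S_0|-|S_0|=|S|,$$
using Hall's condition on $S\cup S_0$ in $G$ and the criticality of $S_0$. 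Since $S_0\neq\emptyset$, the first class of $G_2$ has fewer than $n$ vertices, so induction gives a matching $M_2$ saturating $V_1\setminus S_0$. Because $M_1$ uses only $V_2$-vertices in $\Gamma(S_0)$ and $M_2$ only $V_2$-vertices outside $\Gamma(S_0)$, the union $M_1\cup M_2$ is a complete matching of $G$.

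The main obstacle is the bookkeeping in Case 2: one must verify that passing to $G_2$ does not destroy Hall's condition, and the identity $\Gamma_{G_2}(S)=\Gamma_G(S\cup S_0)\setminus\Gamma(S_0)$ together with the tightness $|\Gamma(S_0)|=|S_0|$ is exactly what makes the inequality balance — this is the one genuinely delicate point, everything else (the base case, Case 1, and gluing $M_1\cup M_2$) being routine. One could instead deduce the theorem from K\"onig's theorem or via an augmenting-path argument, but the inductive proof above is the most self-contained.
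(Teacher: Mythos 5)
Your proof is correct: it is the standard Halmos--Vaughan inductive argument for Hall's theorem, splitting on whether some nonempty proper subset of $V_1$ meets the bound with equality, and both cases are handled accurately (in particular the key identity $\Gamma_{G_2}(S)=\Gamma_G(S\cup S_0)\setminus\Gamma(S_0)$ and the resulting cancellation $|S\cup S_0|-|S_0|=|S|$ are right). There is nothing in the paper to compare against: the authors state Hall's theorem as a classical result at the end of Section 2 and give no proof, using it only as a black box in Lemma \ref{Lem:Hall}. One pedantic remark: as printed, the hypothesis reads ``for every $S\subset V_1$,'' and your argument (correctly) applies the condition to $S\cup S_0$, which may equal $V_1$; the condition must indeed be read as holding for all subsets $S\subseteq V_1$ including $V_1$ itself, since otherwise the theorem is false (two vertices in $V_1$ both joined only to a single vertex of $V_2$ satisfy the condition on proper subsets but admit no complete matching). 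You might also note explicitly that the induction presupposes $V_1$ finite, which is the setting in which the paper uses the result.
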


\section{Structures of some optimal binary $2$-covering arrays}
In this section, we investigate structures of binary $2$-covering arrays of size $m$ and degree $n$ when $n>{{{m-1}\choose{\lfloor \frac{m}{2}\rfloor-1}}+m-3{\lfloor \frac{m}{2}\rfloor}}$.
Throughout this section, a $2$-covering array means a binary $2$-covering array.\\
Let $C$ be a $2$-covering array of size $m$ and degree $n$ and $c^i$ be the $i$-th column of $C$.
By interchanging $c^i$ with its complement, we may assume that $wt(c^{i}) \leq \lfloor \frac{m}{2} \rfloor$ for all $1 \leq i \leq n$. So every $2$-covering array $C$ is equivalent to a $2$-covering array $C'$
of the same size and degree with $wt(c'^i) \leq \lfloor \frac{m}{2} \rfloor$ for all $i \in [n]$.

\begin{lemma}\label{Lem:Hall}
Let $C$ be a $2$-covering array of size $m$ and degree $n$ with $wt(c^i) \leq \lfloor \frac{m}{2} \rfloor$ for all $1 \leq i \leq n$. Put $s=min_{1 \leq i \leq n}{wt(c^{i})}$. For any integer $s'$ satisfying $s < s' \leq {\lfloor \frac{m}{2} \rfloor}$, there is a $2$-covering
array $C'$ of size $m$ and degree $n$ with $s' \leq {wt(c'^{i})} \leq \lfloor \frac{m}{2} \rfloor$ such that $supp(c^{i}) \subseteq supp(c'^{i})$ for all $i \in [n]$.
\end{lemma}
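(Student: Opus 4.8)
The plan is to increase the weights of the light columns one at a time, each time flipping a single $0$-entry to a $1$ in every column of weight exactly $s$, in such a way that the covering-array property is preserved; iterating this from $s$ up to $s'$ then gives the desired $C'$. So it suffices to treat the case $s' = s+1$: given a $2$-covering array $C$ with $s \le wt(c^i) \le \lfloor m/2 \rfloor$ for all $i$, produce a $2$-covering array $C''$ with $\supp(c^i) \subseteq \supp(c''^i)$, with $wt(c''^i) = wt(c^i)$ for all columns of weight $> s$, and with $wt(c''^i) = s+1$ for all columns of weight exactly $s$.

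The key observation is when adding a single $1$ to a weight-$s$ column $c^i$ can destroy the covering property. If we change some entry $c_{ji} = 0$ to $1$, the only pairs of columns that could lose a covered vector are those involving $c^i$, and the only vector that could be lost is the one that had a $0$ in position $i$ from row $j$. Concretely, for another column $c^k$, the pair $(c^i,c^k)$ fails after the flip only if row $j$ was the \emph{unique} row realizing the pattern $(0, c_{jk})$ in columns $(i,k)$. Since $C$ is a $2$-covering array, this forces $wt(c^i) = $ (number of rows with a $0$ in column $i$) to be controlled — in fact the rows with $c_{\cdot i} = 0$ number $m - s$, and these $m-s$ rows must, restricted to any other column $c^k$, hit both values; the pattern $(0,\ast)$ in columns $(i,k)$ is realized uniquely in row $j$ for at most one choice of $\ast$, and this happens for column $k$ only if the zero-set of $c^i$ meets the zero-set (or support) of $c^k$ in exactly one row, namely $j$. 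The point is that for fixed $j$, the set of "bad" rows $j$ — those whose flipping in column $i$ would break some pair — is small, so there is always a safe choice of row to flip. This is exactly where Hall's theorem (Theorem \ref{Thm:Hall}) enters: I would set up a bipartite graph with $V_1$ the set of weight-$s$ columns and $V_2$ the set of row indices $[m]$, joining column $c^i$ to row $j$ when $c_{ji} = 0$ \emph{and} flipping $c_{ji}$ to $1$ keeps $C$ a $2$-covering array; a complete matching then tells us which $0$ to flip in each light column simultaneously, and simultaneity is fine because flips in distinct columns in distinct rows cannot interfere with a common pair.

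To apply Hall's theorem I must verify $|\Gamma(S)| \ge |S|$ for every $S \subseteq V_1$. For a single column $c^i$ of weight $s \le \lfloor m/2\rfloor - 1 < m/2$, there are $m - s > s \ge$ many zero-rows, and at most a bounded number of them are "bad" (blocked by some pair $(c^i, c^k)$ being realized uniquely there); a counting argument using $wt(c^i) \le \lfloor m/2\rfloor$ and $wt(c^k) \le \lfloor m/2 \rfloor$ should show the number of good zero-rows for $c^i$ is at least, say, $m - 2s \ge 1$, and more generally that the neighborhoods are large enough. The main obstacle I anticipate is precisely this Hall-condition estimate: bounding, for an arbitrary subset $S$ of light columns, the total number of available (column, good-zero-row) incidences from below by $|S|$, uniformly. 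I expect this reduces to showing each light column individually has at least $|S|$-independent "room," which in turn follows from the weight constraint $wt(c^i) \le \lfloor m/2\rfloor$ forcing many shared zero-rows among the light columns; once the single-column count is good enough, a double-counting or averaging argument over $S$ closes it. After the matching is obtained, checking that the resulting $C''$ is still a $2$-covering array is routine from the construction (every flipped entry was certified safe, and cross-interference is ruled out by the matching being injective into the rows).
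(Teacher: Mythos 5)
Your overall strategy---raise the minimum weight one step at a time by adding a single $1$ to each weight-$s$ column, with Hall's theorem selecting where to add it---shares the paper's outer shell, but your choice of bipartite graph makes the argument unworkable. You match the weight-$s$ columns into the set of \emph{rows} $[m]$, so a complete matching can exist only if the number of weight-$s$ columns is at most $m$. That bound fails badly in general: for $m=10$, the $\binom{9}{3}=84$ characteristic vectors of the $4$-subsets of $[10]$ containing the element $1$ form a valid $10\times 84$ binary $2$-covering array in which every column has weight $4=\lfloor m/2\rfloor-1$, so $|C_s|=84>10$ and your graph admits no complete matching regardless of which edges you declare ``safe.'' This is not merely a difficulty in estimating the Hall condition (which you explicitly leave open, writing ``I expect this reduces to\ldots''); the condition is simply false. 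The paper avoids this by matching $C_s$ into the set $W_{s+1}$ of \emph{all} weight-$(s+1)$ binary vectors of length $m$, where a two-line degree count ($m-s$ neighbours per column, at most $s+1$ columns per vector, and $\frac{m-s}{s+1}>1$ since $s\le\lfloor m/2\rfloor-1$) gives Hall's condition immediately; the covering property of the modified array is then verified a posteriori by a case analysis on weights, not by certifying individual entry flips as safe.

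A second, independent problem is your claim that ``flips in distinct columns in distinct rows cannot interfere with a common pair.'' They can: if two weight-$s$ columns $c^i,c^k$ realize the pattern $(0,0)$ in exactly two rows $j$ and $j'$, then flipping $c^i$ in row $j$ alone is safe and flipping $c^k$ in row $j'$ alone is safe, yet performing both eliminates $(0,0)$ from the pair $(i,k)$. In the present setting this particular failure happens to be excluded, because two intersecting supports of size at most $\lfloor m/2\rfloor-1$ leave at least three common zeros, so losing two still leaves one; but that rescue must be argued (it is essentially the final paragraph of the paper's proof), and your stated reason for non-interference is not it. Together with the cardinality obstruction above, the proposal as written does not establish the lemma.
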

\begin{proof}
Let $C_i$ be the set of columns of $C$ whose weight is $i$. Let $W_j$ be the set of binary vectors of length $m$ whose weight is $j$. We consider the bipartite graph $G$ with vertex sets $C_s$ and $W_{s+1}$ and edge set $E=\{cc'|c \in C_s, c' \in W_{s+1}\mbox{ and supp}(c)\subseteq \mbox{supp}(c')\}$. For $c \in C_s$, there are $(m-s)$ vectors in $W_{s+1}$ whose support contains $\mbox{supp}(c)$ and for $u\in W_{s+1}$, there are at most $(s+1)$ columns whose support is contained in $\mbox{supp}(u)$. For every $S \subseteq C_s$,
$$
|\Gamma(S)| \geq \frac{m-s}{s+1} |S| \geq \frac{\lceil \frac{m}{2} \rceil + 1}{\lfloor \frac{m}{2} \rfloor} |S| > |S|.
$$
By applying Theorem \ref{Thm:Hall} to $G$, $G$ contains a complete matching $f$ from $C_s$ to $W_{s+1}$. Let $C'$ be the $m \times n$ matrix obtained from $C$ by following way: if a column $c$ of $C$ does not belong to $C_{s}$, we keep it; otherwise replace $c$ with $f(c)$. Then, $C'$ is a matrix such that for each $i$, $s+1 \leq {wt(c'^{i})} \leq \lfloor \frac{m}{2} \rfloor$ and $\mbox{supp}(c^i) \subseteq \mbox{supp}(c'^i)$.

We claim that $C'$ is also a $2$-covering array. Let $X_i$ be either $\mbox{supp}(c'^i)$ or $\mbox{supp}(\overline{c'^i})$. It is enough to show that $X_i \cap X_j \neq \emptyset$ for $i \neq j$.

Since $C$ is a $2$-covering array and $\mbox{supp}(c^i) \subseteq \mbox{supp}(c'^i)$ for any $i$, $\emptyset \neq \mbox{supp}(c^i) \cap \mbox{supp}(c^j) \subseteq \mbox{supp}(c'^i) \cap \mbox{supp}(c'^j)$.

If $|\mbox{supp}(c^j)| > s$, then $\mbox{supp}(c'^j) = \mbox{supp}(c^j)$. Since $C$ is a $2$-covering array, $\emptyset \neq \mbox{supp}(c^i) \cap \mbox{supp}(\overline{c^j}) \subseteq \mbox{supp}(c'^i) \cap \mbox{supp}(\overline{c'^j})$. 
If $|\mbox{supp}(c^i)| > s$ and $|\mbox{supp}(c^j)| = s$, then $|\mbox{supp}(c^i) \cap \mbox{supp}(\overline{c^j})| \geq 2$ since $\mbox{supp}(c^i) \nsupseteq \mbox{supp}(c^j)$.
Since $\mbox{supp}(c^j) \subset \mbox{supp}(c'^j)$ and $1 + |\mbox{supp}(c^j)| = |\mbox{supp}(c'^j)|$, $|\mbox{supp}(c^i) \cap \mbox{supp}(\overline{c'^j})| = |\mbox{supp}(c^i) \cap \mbox{supp}(\overline{c^j})| - 1 \geq 1$. Since $c'^i = c^i$, $\mbox{supp}(c'^i) \cap \mbox{supp}(\overline{c'^j}) \neq \emptyset$.
If $|\mbox{supp}(c^i)| = |\mbox{supp}(c^j)| = s$, then $|\mbox{supp}(c'^i)| = |\mbox{supp}(c'^j)| = s+1$. Since $f$ is a complete matching, $\mbox{supp}(c'^i) \neq \mbox{supp}(c'^j)$. Since $|\mbox{supp}(c'^i) \cap \mbox{supp}(\overline{c'^j})| = |\mbox{supp}(c'^i)| - |\mbox{supp}(c'^i) \cap \mbox{supp}(c'^j)| \geq (s+1) - s = 1$, $\mbox{supp}(c'^i) \cap \mbox{supp}(\overline{c'^j}) \neq \emptyset$. Thus regardless of the weights of $c^i$ and $c^j$, we have $\mbox{supp}(c'^i) \cap \mbox{supp}(\overline{c'^j}) \neq \emptyset$.
By symmetry, we also have $\mbox{supp}(\overline{c'^i}) \cap \mbox{supp}(c'^j) \neq \emptyset$.

Since $|\mbox{supp}(c'^i)| \leq \lfloor \frac{m}{2} \rfloor$ and $|\mbox{supp}(c'^j)| \leq \lfloor \frac{m}{2} \rfloor$, $|\mbox{supp}(c'^i) \cup \mbox{supp}(c'^j)| = |\mbox{supp}(c'^i)| + |\mbox{supp}(c'^j)| - |\mbox{supp}(c'^i) \cap \mbox{supp}(c'^j)| = \lfloor \frac{m}{2} \rfloor + \lfloor \frac{m}{2} \rfloor - 1 \leq m-1$. Hence $|\mbox{supp}(\overline{c'^i}) \cap \mbox{supp}(\overline{c'^j})| = m - |\mbox{supp}(c'^i) \cup \mbox{supp}(c'^j)| \geq 1$,  thus $\mbox{supp}(\overline{c'^i}) \cap \mbox{supp}(\overline{c'^j}) \neq \emptyset$.
\end{proof}

Similarly, we can obtain the followings.

\begin{cor}\label{Cor:Hall01}
Let $C$ be a $2$-covering array of size $m$ and degree $n$ with $wt(c^i) \leq \lfloor \frac{m}{2} \rfloor$ for all $i \in [n]$ and $wt(c^j)< \lfloor \frac{m}{2} \rfloor$. Then there is a $2$-covering array $C'$ of size $m$ and degree $n$ with $wt(c'^{j})=\lfloor \frac{m}{2} \rfloor - 1$ and $wt(c'^{i})=\lfloor \frac{m}{2} \rfloor$ for all $i \in [n]$ and $i\neq j$ such that $supp(c^{i}) \subseteq supp(c'^{i})$ for all $i \in [n]$.
\end{cor}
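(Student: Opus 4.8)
The plan is to derive Corollary~\ref{Cor:Hall01} by iterating Lemma~\ref{Lem:Hall} and handling the one distinguished column $c^j$ with a small variant of the same Hall-type argument. First I would apply Lemma~\ref{Lem:Hall} repeatedly: starting from $C$ with $s=\min_i wt(c^i)$, raise the minimum weight one step at a time, using at each stage $s'=s+1$, until we reach a $2$-covering array $\widetilde{C}$ of size $m$ and degree $n$ in which every column has weight exactly $\lfloor \frac{m}{2}\rfloor$ or $\lfloor \frac{m}{2}\rfloor-1$, and with $\mathrm{supp}(c^i)\subseteq\mathrm{supp}(\widetilde{c}^{\,i})$ for all $i$ (support inclusion is transitive, so it is preserved through the iteration). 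Here I should be slightly careful that the column $c^j$ we care about ends up with weight at most $\lfloor \frac{m}{2}\rfloor-1$, which is automatic as long as in the last step its weight has not yet been pushed to the maximum; since the hypothesis is $wt(c^j)<\lfloor\frac m2\rfloor$ this is arranged by stopping the iteration at the right moment, or equivalently by re-running the argument to note that at least one column of weight $<\lfloor\frac m2\rfloor$ can always be left one short.

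Next I would fix all columns of weight $\lfloor\frac m2\rfloor-1$ \emph{except} $c^j$ (i.e.\ push them up to weight $\lfloor\frac m2\rfloor$). This is again exactly the matching construction in Lemma~\ref{Lem:Hall}: build the bipartite graph between the set of weight-$(\lfloor\frac m2\rfloor-1)$ columns other than $c^j$ and the weight-$\lfloor\frac m2\rfloor$ vectors of $B_2^m$, joined by support inclusion; for a weight-$(\lfloor\frac m2\rfloor-1)$ vector there are $m-\lfloor\frac m2\rfloor+1=\lceil\frac m2\rceil+1$ extensions, and each weight-$\lfloor\frac m2\rfloor$ vector contains at most $\lfloor\frac m2\rfloor$ subsets of weight $\lfloor\frac m2\rfloor-1$, so Hall's condition $|\Gamma(S)|\ge \frac{\lceil m/2\rceil+1}{\lfloor m/2\rfloor}|S|>|S|$ holds, giving a complete matching. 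Replacing each such column by its matched extension (and leaving $c^j$ and the already-maximal columns alone) yields $C'$ with $wt(c'^{\,i})=\lfloor\frac m2\rfloor$ for $i\ne j$ and $wt(c'^{\,j})=\lfloor\frac m2\rfloor-1$, and with $\mathrm{supp}(c^i)\subseteq\mathrm{supp}(c'^{\,i})$ for every $i$. The verification that $C'$ remains a $2$-covering array is the same case analysis as in the proof of Lemma~\ref{Lem:Hall}, comparing two columns according to whether their weights were already maximal, were bumped up, or (for $c^j$) stayed put; the only pairwise intersections one must check are $X_i\cap X_j\ne\emptyset$ for the four choices of $X_i\in\{\mathrm{supp}(c'^{\,i}),\mathrm{supp}(\overline{c'^{\,i}})\}$, and each follows either from support inclusion together with the covering property of $C$, or from the weight bound $wt(c'^{\,i})+wt(c'^{\,j})\le 2\lfloor\frac m2\rfloor\le m$ (which still holds since $c'^{\,j}$ has weight $\lfloor\frac m2\rfloor-1$).

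The main obstacle, such as it is, is bookkeeping rather than a genuine new idea: one must make sure that the distinguished column $c^j$ is never swept up by the iterated applications of Lemma~\ref{Lem:Hall} before the final step, and that the $\supseteq$-relations compose correctly through all the intermediate matrices. A clean way to organize this is to prove the intermediate claim that, given a $2$-covering array all of whose columns have weight $\lfloor\frac m2\rfloor$ or $\lfloor\frac m2\rfloor-1$ and at least one column of weight $\lfloor\frac m2\rfloor-1$, one can make \emph{exactly one} prescribed weight-$(\lfloor\frac m2\rfloor-1)$ column stay while bumping all others up; combined with the first reduction step this gives the corollary. I would present the argument as ``apply Lemma~\ref{Lem:Hall} successively, then repeat its proof with $W_{\lfloor m/2\rfloor-1}\setminus\{c^j\}$ in place of $C_s$,'' since the covering-array verification is verbatim the one already written out.
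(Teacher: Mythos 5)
Your overall strategy (iterate Lemma \ref{Lem:Hall} until the minimum weight is $\lfloor \frac{m}{2}\rfloor-1$, then run the Hall matching once more on the weight-$(\lfloor \frac{m}{2}\rfloor-1)$ columns other than $c^j$) is the natural reading of the paper's ``similarly,'' and the first stage is fine: each application of the matching raises only the columns currently of minimum weight, and by exactly one, so $c^j$ indeed arrives at weight exactly $\lfloor \frac{m}{2}\rfloor-1$. The gap is in your claim that the covering-array verification for the last stage is ``verbatim'' that of Lemma \ref{Lem:Hall}. Excluding $c^j$ from the matching creates a case that does not occur in the lemma: a column $c^i$ ($i\neq j$) of weight $s=\lfloor \frac{m}{2}\rfloor-1$ that is bumped to weight $s+1$, paired against $c^j$, which keeps weight $s$. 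For this pair the intersection $\mbox{supp}(\overline{c'^i})\cap\mbox{supp}(c'^j)$ is covered by neither of your two mechanisms: support inclusion goes the wrong way for $\overline{c'^i}$, and the weight bound only handles $\mbox{supp}(\overline{c'^i})\cap\mbox{supp}(\overline{c'^j})$. Concretely, the covering property of $C$ only gives $|\mbox{supp}(c^j)\setminus\mbox{supp}(c^i)|\geq 1$ (both supports have size $s$), so if $|\mbox{supp}(c^i)\cap\mbox{supp}(c^j)|=s-1$ the unrestricted Hall matching may send $c^i$ to the vector with support $\mbox{supp}(c^i)\cup\mbox{supp}(c^j)$; then $\mbox{supp}(c^j)\subseteq\mbox{supp}(c'^i)$, i.e.\ $\mbox{supp}(\overline{c'^i})\cap\mbox{supp}(c'^j)=\emptyset$, and $C'$ is not a $2$-covering array. (For $m=6$: $\mbox{supp}(c^i)=\{1,2\}$, $\mbox{supp}(c^j)=\{1,3\}$, matched image $\{1,2,3\}$.) In Lemma \ref{Lem:Hall} the analogous pairs are safe because either both columns are bumped to the common weight $s+1$ and have distinct supports, or the unbumped column already has weight at least $s+1$, which is exactly what powers the ``$\geq 2$'' step there; neither applies when the unbumped column is $c^j$ of weight $s$.

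The repair is small but has to be stated: in the final bipartite graph, delete from the neighborhood of each $c^i$ the (at most one) weight-$\lfloor\frac{m}{2}\rfloor$ vector whose support contains $\mbox{supp}(c^j)$, namely the one with support $\mbox{supp}(c^i)\cup\mbox{supp}(c^j)$ when that union has size $\lfloor\frac{m}{2}\rfloor$. Each $c^i$ then still has degree at least $m-s-1=\lceil\frac{m}{2}\rceil$, while each weight-$\lfloor\frac{m}{2}\rfloor$ vector has degree at most $s+1=\lfloor\frac{m}{2}\rfloor$, so $|\Gamma(S)|\geq\frac{\lceil m/2\rceil}{\lfloor m/2\rfloor}|S|\geq|S|$ and Theorem \ref{Thm:Hall} still yields a complete matching. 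With this restriction $\mbox{supp}(c^j)\not\subseteq\mbox{supp}(c'^i)$ for every bumped $i$, and the rest of your case analysis closes.
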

\begin{cor}\label{Cor:Hall02}
Let $C$ be a $2$-covering array of size $m$ and degree $n$ with $wt(c^i) \leq \lfloor \frac{m}{2} \rfloor$ for all $1 \leq i \leq n$. Then there is a $2$-covering array $C'$ of size $m$ and degree $n$ with $wt(c'^{i})=\lfloor \frac{m}{2} \rfloor$ for all $1 \leq i \leq n$ such that $supp(c^{i}) \subseteq supp(c'^{i})$ for all $i \in [n]$.
\end{cor}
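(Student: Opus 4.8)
The plan is to obtain Corollary~\ref{Cor:Hall02} by iterating Lemma~\ref{Lem:Hall}. Write $s_0=\min_{1\le i\le n} wt(c^i)$. If $s_0=\lfloor \frac{m}{2}\rfloor$ then, since every column of $C$ has weight at most $\lfloor \frac{m}{2}\rfloor$, in fact every column has weight exactly $\lfloor \frac{m}{2}\rfloor$ and we may take $C'=C$. So assume $s_0<\lfloor \frac{m}{2}\rfloor$. Applying Lemma~\ref{Lem:Hall} with $s'=s_0+1$ produces a $2$-covering array $C_1$ of size $m$ and degree $n$, with columns $d^i$ satisfying $s_0+1\le wt(d^i)\le \lfloor \frac{m}{2}\rfloor$ and $\mbox{supp}(c^i)\subseteq \mbox{supp}(d^i)$ for every $i$. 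Since the minimum column weight has strictly increased while the hypothesis $wt(d^i)\le\lfloor \frac{m}{2}\rfloor$ is preserved, we may repeat; after at most $\lfloor \frac{m}{2}\rfloor-s_0$ such steps we reach a $2$-covering array all of whose columns have weight $\lfloor \frac{m}{2}\rfloor$. Taking $C'$ to be this final array and chaining the inclusions $\mbox{supp}(c^i)\subseteq\mbox{supp}(d^i)\subseteq\cdots$ by transitivity gives the conclusion.

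A shorter route, using Lemma~\ref{Lem:Hall} in the generality in which it is stated, is simply to invoke it once with $s'=\lfloor \frac{m}{2}\rfloor$ (legitimate precisely when $s_0<\lfloor \frac{m}{2}\rfloor$): this forces $\lfloor \frac{m}{2}\rfloor\le wt(c'^i)\le \lfloor \frac{m}{2}\rfloor$, i.e. $wt(c'^i)=\lfloor \frac{m}{2}\rfloor$ for all $i$, while retaining $\mbox{supp}(c^i)\subseteq\mbox{supp}(c'^i)$. I would keep the iterated version in the text, since that is also how the core of Lemma~\ref{Lem:Hall} is established, so the two arguments are really one. Corollary~\ref{Cor:Hall01} follows by the same scheme with one adjustment: run the Hall-type matching on all columns except the distinguished one $c^j$, raising their weights to $\lfloor \frac{m}{2}\rfloor$, and separately apply the one-step move to $c^j$ only while its weight stays below $\lfloor \frac{m}{2}\rfloor-1$, so that it terminates at weight exactly $\lfloor \frac{m}{2}\rfloor-1$; the check that the enlarged array still covers every pair of columns is the same case analysis as in the proof of Lemma~\ref{Lem:Hall}, now using $wt(c'^i)+wt(c'^j)\le \lfloor \frac{m}{2}\rfloor+(\lfloor \frac{m}{2}\rfloor-1)\le m-1$.

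There is essentially no obstacle here: the substantive content---the counting estimate $|\Gamma(S)|\ge \frac{m-s}{s+1}|S|>|S|$ that feeds Hall's theorem, and the verification that the enlarged array remains a $2$-covering array---is already carried out in Lemma~\ref{Lem:Hall}, and the corollary only packages repeated applications of it. The one point needing a word of care is the degenerate case $s_0=\lfloor \frac{m}{2}\rfloor$, for which Lemma~\ref{Lem:Hall} has no admissible parameter $s'$ but the statement is trivially true; and, for Corollary~\ref{Cor:Hall01}, the small amount of bookkeeping needed to stop the distinguished column at weight $\lfloor \frac{m}{2}\rfloor-1$ instead of $\lfloor \frac{m}{2}\rfloor$.
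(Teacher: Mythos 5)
Your proposal is correct and matches the paper's intent: the paper gives no separate proof of this corollary, merely noting that it follows ``similarly'' from Lemma~\ref{Lem:Hall}, and your derivation (iterate the one-step weight increase, or equivalently invoke the lemma once with $s'=\lfloor \frac{m}{2}\rfloor$, handling the degenerate case $s_0=\lfloor \frac{m}{2}\rfloor$ trivially) is exactly the intended argument. The support inclusions chain by transitivity as you say, so nothing further is needed.
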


We introduce well known theorem called the Erd\"{o}s-Ko-Rado theorem without proof. See \cite{Lin} for a proof.

\begin{theorem}$($Erd\"{o}s-Ko-Rado 1938$)$\label{Thm:Erd}\\
If $m \geq 2r$, and $\mathcal{F}$ is a family of distinct subsets of $[m]$ such that each subset is of size $r$ 
and each pair of subsets intersects, then the maximum number of sets that can be in $\mathcal{F}$ is given by the binomial coefficient
$$
{m-1}\choose{r-1}.
$$
\end{theorem}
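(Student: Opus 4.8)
The plan is to give Katona's cyclic-permutation (the ``circle'') proof, which is completely elementary and fits the combinatorial flavour of the paper. Fix $m \geq 2r$ and an intersecting family $\mathcal{F}$ of $r$-subsets of $[m]$. For a cyclic ordering $\sigma$ of the $m$ points of $[m]$ around a circle, call a subset $A \subseteq [m]$ a \emph{$\sigma$-arc} if $|A| = r$ and $A$ consists of $r$ cyclically consecutive points of $\sigma$. There are exactly $(m-1)!$ cyclic orderings, each having exactly $m$ arcs.

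The combinatorial heart of the argument is the following claim: for every fixed cyclic ordering $\sigma$, at most $r$ members of $\mathcal{F}$ are $\sigma$-arcs. To see this we may assume that some $A \in \mathcal{F}$ is a $\sigma$-arc, say $A$ occupies positions $0, 1, \ldots, r-1$ of $\sigma$, writing $A_j$ for the arc starting at position $j$. A $\sigma$-arc $A_j \neq A$ meets $A$ precisely when $j \in \{-(r-1), \ldots, -1, 1, \ldots, r-1\}$ (mod $m$), which is $2(r-1)$ distinct starting positions since $m \geq 2r$. These split into the $r-1$ pairs $\{A_{-i}, A_{r-i}\}$, $1 \leq i \leq r-1$, and in each pair the union of the two arcs consists of $2r$ consecutive positions, hence (as $2r \leq m$) the two arcs are disjoint. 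Since $\mathcal{F}$ is intersecting, every member of $\mathcal{F}$ that is a $\sigma$-arc must meet $A$, so it is $A$ itself or one of those $2(r-1)$ arcs, and $\mathcal{F}$ can contain at most one arc from each of the $r-1$ disjoint pairs; thus at most $1 + (r-1) = r$ in total.

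To finish I would double count the set of pairs $\{(\sigma, F) : \sigma \text{ a cyclic ordering}, \ F \in \mathcal{F}, \ F \text{ a }\sigma\text{-arc}\}$. By the claim its cardinality is at most $r \cdot (m-1)!$. On the other hand, a fixed $r$-set $F$ is a $\sigma$-arc for exactly $r!\,(m-r)!$ cyclic orderings $\sigma$: glue the points of $F$ into a single block, giving $(m-r)!$ circular arrangements of this block together with the $m-r$ remaining points, times $r!$ internal orderings of $F$. Hence $|\mathcal{F}| \cdot r!\,(m-r)! \leq r \cdot (m-1)!$, which rearranges to $|\mathcal{F}| \leq \dfrac{(m-1)!}{(r-1)!\,(m-r)!} = \binom{m-1}{r-1}$, as claimed.

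The step I expect to be the main obstacle is the arc claim itself — in particular, checking that the $2(r-1)$ arcs other than $A$ that meet $A$ really do partition into $r-1$ pairs of disjoint arcs, with all starting positions distinct, and that this remains valid down to the boundary case $m = 2r$ (where each pair becomes a pair of complementary arcs, but the argument still goes through unchanged). Once the claim is established, the double count and the arithmetic identity $\frac{r\,(m-1)!}{r!\,(m-r)!} = \binom{m-1}{r-1}$ are routine. (Sharpness, witnessed by the star of all $r$-sets containing a fixed point, is not needed for the statement, though it could be remarked on.)
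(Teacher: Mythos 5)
Your proof is correct and complete. Note, however, that the paper does not prove this statement at all: it explicitly introduces the Erd\"{o}s--Ko--Rado theorem ``without proof'' and refers the reader to van Lint and Wilson, so there is no internal argument to compare yours against. What you have supplied is the standard Katona cycle proof, and every step checks out: the $2(r-1)$ starting positions of arcs meeting $A$ are indeed distinct modulo $m$ once $m\geq 2r-1$; the pairing $\{A_{-i},A_{r-i}\}$ does produce $r-1$ pairs of disjoint arcs because each pair covers $2r$ consecutive positions and $2r\leq m$ prevents wrap-around overlap (so the boundary case $m=2r$, where the pairs are complementary arcs, causes no trouble); the count of $r!\,(m-r)!$ cyclic orderings in which a fixed $r$-set is an arc is right ($m-r+1$ objects on a circle give $(m-r)!$ arrangements); and the final arithmetic is correct. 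Your closing remark that the star of all $r$-sets through a fixed point attains the bound is what upgrades the inequality to the stated ``maximum,'' so it is worth keeping rather than leaving parenthetical. Since the paper only uses the upper bound (in Proposition \ref{Prop:Hall}), your argument is more than adequate for the role the theorem plays here.
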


\begin{proposition}{\label{Prop:Hall}}
For $m \geq 4$,
$$
\overline{CAN}(2,m,2) \leq {{m-1}\choose{{\lfloor \frac{m}{2}
\rfloor}-1}}.
$$
\end{proposition}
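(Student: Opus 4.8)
The plan is to combine the weight-regularization corollary of this section with the Erd\H{o}s--Ko--Rado theorem. Let $C$ be a $2$-covering array of size $m$ attaining the maximal degree $n=\overline{CAN}(2,m,2)$. By the preliminary remark at the beginning of Section~3, after replacing each column of weight exceeding $\lfloor \frac{m}{2}\rfloor$ by its complement we may assume $wt(c^i)\leq \lfloor \frac{m}{2}\rfloor$ for every $i\in[n]$; this changes neither the size nor the degree. Now apply Corollary~\ref{Cor:Hall02}: it produces a $2$-covering array $C'$ of the same size $m$ and the same degree $n$ all of whose columns have weight exactly $\lfloor \frac{m}{2}\rfloor$.

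The next step is to read off an intersecting family from $C'$. Because $C'$ is a $2$-covering array, for any $i\neq j$ the two columns $c'^i,c'^j$ must, in those two coordinate positions, realize all four patterns $(1,1),(1,0),(0,1),(0,0)$. Realizing $(1,1)$ says $\mathrm{supp}(c'^i)\cap\mathrm{supp}(c'^j)\neq\emptyset$, while realizing $(1,0)$ says $\mathrm{supp}(c'^i)\cap\mathrm{supp}(\overline{c'^j})\neq\emptyset$, hence in particular $c'^i\neq c'^j$. Therefore $\{\mathrm{supp}(c'^i):i\in[n]\}$ is a family of $n$ distinct $\lfloor \frac{m}{2}\rfloor$-subsets of $[m]$, each pair of which intersects. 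Since $m\geq 2\lfloor \frac{m}{2}\rfloor$, Theorem~\ref{Thm:Erd} applied with $r=\lfloor \frac{m}{2}\rfloor$ bounds the number of such sets by $\binom{m-1}{\lfloor m/2\rfloor-1}$, so $n\leq \binom{m-1}{\lfloor m/2\rfloor-1}$, which is the assertion.

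The only point that needs care is the reduction: one should check that Corollary~\ref{Cor:Hall02} genuinely preserves both the size $m$ and the degree $n$ while forcing every column weight to the \emph{same} value $\lfloor \frac{m}{2}\rfloor$, so that Erd\H{o}s--Ko--Rado applies with a single uniform $r$; the distinctness of the resulting columns, which EKR also requires, is not an extra hypothesis but falls out of the covering property as explained above. Apart from this, the argument is a direct chaining of Corollary~\ref{Cor:Hall02} and Theorem~\ref{Thm:Erd}, and together with Proposition~\ref{Prop:standard} it pins down $\overline{CAN}(2,m,2)$ exactly.
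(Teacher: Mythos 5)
Your proof is correct and takes essentially the same route as the paper: reduce, via the complementation remark and Corollary~\ref{Cor:Hall02}, to a covering array all of whose columns have weight exactly $\lfloor \frac{m}{2}\rfloor$, and then apply the Erd\"{o}s--Ko--Rado theorem (Theorem~\ref{Thm:Erd}). You merely make explicit the step the paper leaves implicit, namely that the covering property forces the supports to be pairwise intersecting and pairwise distinct.
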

\begin{proof}
Let $C$ be a $2$-covering array of size $m$ and degree $n$. From Corollary \ref{Cor:Hall02}, we can assume $wt(c^{i})=\lfloor \frac{m}{2} \rfloor$ for all $i \in [n]$. It follows from Theorem \ref{Thm:Erd}.
\end{proof}

From Propositions \ref{Prop:standard} and {\ref{Prop:Hall}}, we have

\begin{theorem}$($Katona 1973, Kleitman and Spencer 1973$)$\label{Thm:Sperner}\\
For $m \geq 4$,
$$
\overline{CAN}(2,m,2) = {{m-1}\choose{{\lfloor \frac{m}{2}
\rfloor}-1}}.
$$
\end{theorem}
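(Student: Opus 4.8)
The plan is to obtain the equality by sandwiching $\overline{CAN}(2,m,2)$ between the two matching bounds already established in this section. No new construction or estimate is needed; the theorem is a formal consequence of Propositions~\ref{Prop:standard} and~\ref{Prop:Hall}.

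First I would invoke Proposition~\ref{Prop:standard}, which exhibits the standard maximal binary $2$-covering array of size $m$ and degree $\binom{m-1}{\lfloor m/2\rfloor-1}$, giving the lower bound $\overline{CAN}(2,m,2) \geq \binom{m-1}{\lfloor m/2\rfloor-1}$ for $m \geq 4$. Then I would invoke Proposition~\ref{Prop:Hall} for the reverse inequality $\overline{CAN}(2,m,2) \leq \binom{m-1}{\lfloor m/2\rfloor-1}$. Recall that the proof of Proposition~\ref{Prop:Hall} itself runs through Corollary~\ref{Cor:Hall02} (to reduce, up to equivalence and up to enlarging supports, to the case where every column has weight exactly $\lfloor m/2\rfloor$) and then through the Erd\H{o}s--Ko--Rado Theorem~\ref{Thm:Erd} with $r = \lfloor m/2\rfloor$ and $m \geq 2r$, which caps the number of pairwise-intersecting $r$-subsets of $[m]$ by $\binom{m-1}{r-1}$. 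Combining the two inequalities yields the asserted identity.

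Since both directions are already in hand, there is essentially no obstacle; the only point to be careful about is bookkeeping on the floor/ceiling functions, namely that $\lfloor m/2\rfloor \le m - \lfloor m/2\rfloor$ so the hypothesis $m \ge 2r$ of the Erd\H{o}s--Ko--Rado theorem is met, and that the same binomial coefficient $\binom{m-1}{\lfloor m/2\rfloor-1}$ appears on both sides (using $\binom{m-1}{\lfloor m/2\rfloor-1} = \binom{m-1}{\lceil m/2\rceil-1}$ when needed). I would also note in passing that this recovers the result of Katona and of Kleitman and Spencer, which is the content of the theorem's attribution.
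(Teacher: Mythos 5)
Your proposal is correct and matches the paper exactly: the theorem is stated there as an immediate consequence of Proposition~\ref{Prop:standard} (the lower bound from the standard maximal array) and Proposition~\ref{Prop:Hall} (the upper bound via Corollary~\ref{Cor:Hall02} and the Erd\H{o}s--Ko--Rado theorem). Your extra bookkeeping on the hypothesis $m \ge 2\lfloor m/2\rfloor$ is a reasonable check but introduces nothing beyond what the paper already uses.
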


Hilton and Milner \cite{Hil} gave a upper bound to the degree of $2$-covering arrays of size $m$ under some condition.

\begin{theorem}$($Hilton and Milner 1967$)$\label{Thm:Hilton}\\
Let $2 \leq k \leq {\lfloor \frac{m}{2} \rfloor}$ and $C$ be a $2$-covering array of size $m$ and degree $n$ with $wt(c^{i}) \leq k$ for all $i \in [n]$ and
${\bigcap_{1 \leq i \leq n}}{ \mbox{supp}(c^{i}) }={\emptyset}.$ Then
\begin{equation}\label{eq:hil1}
n \leq {1+{{m-1}\choose{k-1}}-{{m-k-1}\choose{k-1}}}.
\end{equation}
There is strict inequality in $(\ref{eq:hil1})$ if $wt(c^{i}) < k$ for some $i \in [n]$.
\end{theorem}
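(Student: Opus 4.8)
The statement is the Hilton--Milner theorem phrased for covering arrays, so the plan is to carry out that translation and then reconstruct (or simply cite) Hilton--Milner. First I would pass to the support family. In a $2$-covering array no two columns coincide (otherwise one of the four patterns on that pair of columns is missed), and for $i\neq j$ the patterns $(1,1)$, $(1,0)$, $(0,1)$ force $\supp(c^i)\cap\supp(c^j)\neq\emptyset$, $\supp(c^i)\not\subseteq\supp(c^j)$ and $\supp(c^j)\not\subseteq\supp(c^i)$. Hence $\mathcal{F}=\{\,\supp(c^i)\mid i\in[n]\,\}$ is a family of $n$ distinct, pairwise intersecting subsets of $[m]$, each of size at most $k\le\lfloor\frac{m}{2}\rfloor$, which is an antichain and has $\bigcap\mathcal{F}=\emptyset$ by hypothesis; equivalently $\mathcal{F}$ is an intersecting family that is not a star. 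The goal becomes $n\le 1+\binom{m-1}{k-1}-\binom{m-k-1}{k-1}$, with strict inequality once some $\supp(c^i)$ has size $<k$.

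Second, I would reduce to the $k$-uniform case by an enlargement argument in the spirit of Lemma~\ref{Lem:Hall}. As long as $\mathcal{F}$ contains a member $F$ with $|F|<k$, replace $F$ by $F\cup\{y\}$ for a new element $y\notin F$; enlarging a set never breaks the intersecting property, so the point requiring care is to keep $\bigcap\mathcal{F}=\emptyset$, which amounts to choosing $y$ outside $K:=\bigcap_{G\in\mathcal{F},\,G\neq F}G$ (and keeping the members distinct). One checks such a $y$ always exists because $|K|\le k\le\lfloor\frac{m}{2}\rfloor$ leaves room, exactly the way the slack in the Hall condition is used in Lemma~\ref{Lem:Hall}. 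Iterating, with $\sum_i\bigl(k-wt(c^i)\bigr)$ dropping at each step, terminates at a $k$-uniform intersecting family of size $n$ that is still not a star.

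Third, I would apply the uniform Hilton--Milner inequality: an intersecting family of $k$-subsets of $[m]$ with $m\ge 2k$ that is not a star has at most $1+\binom{m-1}{k-1}-\binom{m-k-1}{k-1}$ members. I would prove this by compression: left-compress $\mathcal{F}$, which preserves its size, uniformity, the intersecting property, and (with the usual extra bookkeeping) the property of not being a star; a left-compressed non-star intersecting family necessarily contains the set $B=\{2,\dots,k+1\}$, so every member meets $B$. Splitting into the members containing $1$ and those avoiding $1$, the first class has at most $\binom{m-1}{k-1}-\binom{m-k-1}{k-1}$ members (the $k$-sets through $1$ meeting $B$), while bounding the second class --- which in the extremal configuration $\{B\}\cup\{F:1\in F,\ F\cap B\neq\emptyset\}$ reduces to $B$ alone --- is the classical heart of the Hilton--Milner argument. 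Adding the two bounds yields the inequality for $n$.

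Finally, the strict inequality when some $\supp(c^i)$ has size $<k$ is the corresponding refinement: then at least one enlargement was performed in Step~2, so equality would force the resulting $k$-uniform family to be the (unique) Hilton--Milner extremal configuration, and one checks that undoing a single enlargement of that configuration creates a containment, contradicting that the original $\mathcal{F}$ is an antichain; alternatively one quotes the strict form of Hilton--Milner directly. The main obstacle is the classical core of Step~3 --- the bound on the members of the compressed family that avoid the distinguished element --- with the empty-intersection-preserving enlargement of Step~2 a secondary technical point; in practice one simply cites \cite{Hil}, the assertion being precisely the Hilton--Milner theorem.
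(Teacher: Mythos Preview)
The paper does not prove Theorem~\ref{Thm:Hilton} at all: it is stated with attribution to Hilton and Milner and cited from~\cite{Hil}, and the paper only uses it (via Corollary~\ref{Cor:hil}) as a black box. So there is no ``paper's own proof'' to compare against; your final remark that ``in practice one simply cites~\cite{Hil}'' is precisely what the paper does.

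That said, your reconstruction is the standard outline and is essentially sound. The translation in Step~1 is exactly the bridge the paper relies on implicitly: from the $2$-covering property one gets that the supports form a pairwise intersecting antichain of distinct sets of size at most $k$, with empty common intersection by hypothesis. Step~3 is the modern compression proof of the $k$-uniform Hilton--Milner bound; the one genuinely delicate point you gloss over is that left-compression can destroy the ``not a star'' property, and the usual fix (compress with respect to all pairs $(1,j)$ except one, then argue directly) deserves a word. In Step~2, enlarging one set at a time while preserving both $\bigcap\mathcal{F}=\emptyset$ and distinctness is slightly more fragile than you indicate: after the first enlargement the family need not remain an antichain, so a later $F\cup\{y\}$ could collide with a previously enlarged member, and the crude count $m-|F|-|K|\ge m-2k+1\ge 1$ may leave no room to dodge. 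The clean way around this---which you allude to---is to imitate Lemma~\ref{Lem:Hall} and use Hall's theorem to raise all minimum-weight members simultaneously to distinct $(s{+}1)$-sets; then the intersecting property and empty intersection are preserved for the same counting reasons, and distinctness comes for free from the matching. With those two standard repairs your sketch goes through, and the strict-inequality clause is then immediate from the uniqueness of the Hilton--Milner extremal family (any de-enlargement of a member of that family produces a set contained in another member, contradicting that the original supports form an antichain).
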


Put $k = \lfloor \frac{m}{2} \rfloor$ in Theorem \ref{Thm:Hilton}, we have
\begin{cor}\label{Cor:hil}
Let $C$ be a $2$-covering array of size $m$ and degree $n$ with $wt(c^i) \leq \lfloor \frac{m}{2} \rfloor$ for all $i \in [n]$ and ${\bigcap_{1 \leq i \leq n}}{ \mbox{supp}(c^{i}) }={\emptyset}.$ Then
\begin{equation}\label{eq:hil2}
n \leq \left\{
\begin{array}{ll}
{{m-1}\choose{ \lfloor \frac{m}{2} \rfloor - 1}}, & \textrm{if} \ \ m \ \textrm{is even} ;\\
{{m-1}\choose{\lfloor \frac{m}{2} \rfloor - 1}}- \lfloor \frac{m}{2} \rfloor + 1, & \textrm{if} \ \ m \ \textrm{is odd}.
\end{array} \right.
\end{equation}
There is strict inequality in $(\ref{eq:hil2})$ if $wt(c^{i}) < \lfloor \frac{m}{2} \rfloor$ for some $i \in [n]$.
\end{cor}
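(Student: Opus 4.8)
The plan is to derive Corollary~\ref{Cor:hil} as an immediate specialization of the Hilton--Milner bound in Theorem~\ref{Thm:Hilton}, taking $k=\lfloor\frac{m}{2}\rfloor$. First I would check that the hypotheses of the corollary are precisely those of Theorem~\ref{Thm:Hilton} under this choice of $k$: the weight condition $wt(c^{i})\le\lfloor\frac{m}{2}\rfloor$ is $wt(c^{i})\le k$, the empty-intersection hypothesis $\bigcap_{i}\mbox{supp}(c^{i})=\emptyset$ is identical, and the admissibility requirement $2\le k\le\lfloor\frac{m}{2}\rfloor$ holds because $k=\lfloor\frac{m}{2}\rfloor$ and (tacitly) $m\ge4$. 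Theorem~\ref{Thm:Hilton} then yields
$$
n\le 1+\binom{m-1}{k-1}-\binom{m-k-1}{k-1},
$$
with strict inequality whenever $wt(c^{i})<k$ for some $i$.

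Next I would simplify the subtracted term $\binom{m-k-1}{k-1}$ according to the parity of $m$. If $m$ is even, say $m=2k$, then $m-k-1=k-1$, so $\binom{m-k-1}{k-1}=\binom{k-1}{k-1}=1$ and the bound reduces to $n\le\binom{m-1}{k-1}=\binom{m-1}{\lfloor m/2\rfloor-1}$. If $m$ is odd, say $m=2k+1$, then $m-k-1=k$, so $\binom{m-k-1}{k-1}=\binom{k}{k-1}=k=\lfloor\frac{m}{2}\rfloor$ and the bound becomes $n\le\binom{m-1}{k-1}-\lfloor\frac{m}{2}\rfloor+1$. These are exactly the two branches of (\ref{eq:hil2}).

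Finally, the strict-inequality clause is inherited verbatim: if $wt(c^{i})<\lfloor\frac{m}{2}\rfloor=k$ for some $i$, then (\ref{eq:hil1}) is strict, hence so is the specialized bound (\ref{eq:hil2}) in either parity. I do not anticipate any genuine obstacle here; the only points requiring care are the elementary evaluation of $\binom{m-k-1}{k-1}$ in the two cases $m=2k$ and $m=2k+1$, and confirming that $k=\lfloor\frac{m}{2}\rfloor$ lies in the admissible range $2\le k\le\lfloor\frac{m}{2}\rfloor$ demanded by Theorem~\ref{Thm:Hilton}, which is where the standing assumption $m\ge4$ enters.
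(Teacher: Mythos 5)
Your proposal is correct and is exactly the paper's argument: the paper derives Corollary~\ref{Cor:hil} by the single line ``Put $k=\lfloor\frac{m}{2}\rfloor$ in Theorem~\ref{Thm:Hilton}'', and your parity computation of $\binom{m-k-1}{k-1}$ (equal to $1$ when $m=2k$ and to $\lfloor\frac{m}{2}\rfloor$ when $m=2k+1$) correctly fills in the arithmetic the paper leaves implicit. The strict-inequality clause is indeed inherited verbatim, so there is nothing to add.
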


We now state the main result of this section.


\begin{theorem}{\label{Thm:equi}}
Let $C$ be a $2$-covering array of size $m$ and degree $n$. If $m \geq 4$ and $n>{{{m-1}\choose{\lfloor \frac{m}{2}\rfloor-1}}+m-3{\lfloor \frac{m}{2}\rfloor}}$, then $C$ is equivalent to
$C'$, where $C'$ is made from deleting columns
of standard maximal $2$-covering of size $m$. Moreover, for each column $c^i$ of $C$
\begin{equation}\label{eq:hil2}
wt(c^i) = \left\{
\begin{array}{ll}
{\lfloor \frac{m}{2}\rfloor} \ \ \ \ \ \ \ \ \ \ \ \ \ \ \ \ \ \ \textrm{if m is even },\\
{\lfloor \frac{m}{2}\rfloor} \ \mbox{or} \ {\lfloor \frac{m}{2}\rfloor} + 1 \ \ \ \textrm{if m is odd }.
\end{array} \right.
\end{equation}
\end{theorem}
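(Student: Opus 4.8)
The plan is to reduce to the case where all column weights are at most $h:=\lfloor\frac m2\rfloor$ and then, for each parity of $m$, to show at once that every column has weight exactly $h$ and that all column supports contain a common coordinate. By complementing the columns of weight $>h$, fix a $2$-covering array $\widehat C$ equivalent to $C$, of size $m$ and degree $n$, all of whose columns have weight $\le h$. Since a $2$-covering array has pairwise distinct columns, it suffices to show that, after a permutation of rows and (when $m$ is even) complementing some columns, $\widehat C$ becomes the standard maximal $2$-covering array of size $m$ with some columns deleted; transitivity of equivalence then gives $C\sim C'$, and tracking which columns of $C$ were complemented to form $\widehat C$ produces the claimed column weights of $C$. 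When $m$ is odd I would first observe that the supports of the columns of $\widehat C$ have a common element: otherwise Corollary \ref{Cor:hil} would give $n\le\binom{m-1}{h-1}-h+1=\binom{m-1}{h-1}+m-3h$, contradicting the hypothesis. (Throughout we use $m\ge4$, which in particular makes $(h-1)$-subsets nonempty.)

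The crux is to rule out a column of $\widehat C$ of weight $<h$, which I would do by contradiction using Corollary \ref{Cor:Hall01}. If $\widehat C$ had such a column, say the $j$th, Corollary \ref{Cor:Hall01} would furnish a $2$-covering array $D$ of size $m$ and degree $n$ whose $j$th column has weight $h-1$, whose other columns have weight $h$, and whose column supports contain those of the corresponding columns of $\widehat C$; write $T$ for the support of the $j$th column of $D$ and $S_i$ for the support of the $i$th column ($i\ne j$). Suppose first $m=2h$. A check of the four $2\times2$ patterns shows that the only conditions not automatic from the cardinalities $|S_i|=h$, $|T|=h-1$, $|[m]|=2h$ are $S_i\cap S_{i'}\ne\emptyset$ for $i\ne i'$ together with $S_i\cap T\ne\emptyset$ and $T\not\subseteq S_i$ for each $i$. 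Call an $h$-subset $S\subseteq[m]$ \emph{admissible} if $S\cap T\ne\emptyset$ and $T\not\subseteq S$; a short count shows the admissible sets are closed under complementation and form $\binom{m-1}{h-1}-h-1$ complementary pairs, and as $\{S_i\}$ is pairwise intersecting it uses at most one set from each pair, so $n-1\le\binom{m-1}{h-1}-h-1$, i.e. $n\le\binom{m-1}{h-1}+m-3h$, contradicting the hypothesis. Suppose now $m=2h+1$. Since the column supports of $D$ contain those of $\widehat C$, they too have a common element, so after a row permutation every column of $D$ has a $1$ in the first row; deleting that row turns each $S_i$ into a distinct $(h-1)$-subset $S_i^*$ of an $(m-1)$-set and $T$ into an $(h-2)$-subset $T^*$, and since $|T|<|S_i|$ and both supports contain the common coordinate, the only nontrivial pattern constraint from the $j$th column becomes $T^*\not\subseteq S_i^*$. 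As only $\binom{h+2}{1}=h+2$ of the $(h-1)$-subsets of an $(m-1)$-set contain $T^*$, we get $n-1\le\binom{m-1}{h-1}-(h+2)$, i.e. $n\le\binom{m-1}{h-1}-h-1$, again contradicting the hypothesis.

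Thus every column of $\widehat C$ has weight exactly $h$. If $m$ is odd this completes the structural claim: every column of $\widehat C$ contains the common coordinate, so deleting that row leaves distinct $(h-1)$-subsets of an $(m-1)$-set, a subfamily of the columns of the standard maximal array, whence $\widehat C$ is that array with some columns deleted. If $m$ is even, the columns of $\widehat C$ form a pairwise intersecting family of distinct $h$-subsets of $[2h]$; fixing a coordinate $k$ and replacing each column $S_i$ with $k\notin S_i$ by its complement produces a family of distinct $h$-subsets each containing $k$, hence (any pairwise intersecting family of distinct $h$-subsets of $[2h]$ is a $2$-covering array, all four patterns being forced by $|S_i|=|S_{i'}|=h$ and $S_i\cap S_{i'}\ne\emptyset$) an equivalent $2$-covering array all of whose columns contain $k$, and moving $k$ to the first row exhibits it as the standard maximal array with some columns deleted. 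Finally, the ``moreover'' part is bookkeeping: a column of $C$ that got complemented in forming $\widehat C$ had weight $>h$ in $C$ and weight $h$ in $\widehat C$, hence weight $m-h$ in $C$, which equals $h$ if $m$ is even (so in fact no column was complemented) and $h+1$ if $m$ is odd, while the other columns keep weight $h$. I expect the main obstacle to be the even case: maximum intersecting families of $(m/2)$-subsets of an $m$-set are far from unique, so there is no uniqueness theorem to invoke, and one must instead exploit complement-closedness of the admissible sets for the count and the ``rotation by $k$'' to recover a common coordinate; on the odd side the subtlety is to fix the common coordinate before normalizing a low-weight column via Corollary \ref{Cor:Hall01}, since a direct LYM/Sperner bound on the antichain of supports is not sharp enough.
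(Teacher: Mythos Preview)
Your argument is correct and, for the odd case, tracks the paper's proof closely: both fix the common coordinate via Corollary~\ref{Cor:hil}, normalize a hypothetical low-weight column with Corollary~\ref{Cor:Hall01}, and then count the remaining weight-$h$ columns against the forbidden supersets of the $(h-1)$-support, arriving at the same bound $n-1\le\binom{m-1}{h-1}-(h+2)$.

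The even case is where you genuinely diverge. After invoking Corollary~\ref{Cor:Hall01}, the paper performs a second normalization: it complements the weight-$h$ columns so that every support contains a fixed coordinate (possible since $m=2h$ makes complements also weight $h$), and then reuses the odd-case count verbatim. You instead observe directly that the ``admissible'' $h$-subsets of $[2h]$ (those meeting $T$ but not containing it) are closed under complementation, split into $\binom{m-1}{h-1}-h-1$ complementary pairs, and a pairwise-intersecting family can meet each pair at most once; this yields the identical inequality $n-1\le\binom{m-1}{h-1}-h-1$ without the extra normalization. Your route is a bit cleaner and makes the complement symmetry do the work, while the paper's has the virtue of reducing both parities to a single counting argument. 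You are also more explicit than the paper on two points it leaves implicit: the final structural step in the even case (complementing columns not through a chosen coordinate $k$ to exhibit the array as a subfamily of the standard maximal one) and the ``moreover'' bookkeeping showing that in the even case no column of $C$ was actually complemented.
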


\begin{proof}
By the definition of equivalence of covering arrays, $C$ is equivalent to a $2$-covering array $C_1$,
where $wt(c_1^i) \leq {\lfloor \frac{m}{2}\rfloor}$ for any column $c_1^i$ of $C_1$. Hence we may assume that $wt(c^{i}) \leq {\lfloor \frac{m}{2}\rfloor}$ for each column $c^i$ of $C$.\\
Let $m \geq 5$ be odd. Since $n>{{{m-1}\choose{{\lfloor \frac{m}{2}\rfloor}-1}}+m-3{\lfloor \frac{m}{2}\rfloor}}$, $\bigcap_{1 \leq i \leq n}{\mbox{supp}(c^{i})}\neq{\emptyset}$ by Corollary $\ref{Cor:hil}$. We may assume that $1 \in \bigcap_{1
\leq i \leq n}{\mbox{supp}(c^{i})}$. It is enough to show that $wt(c^{i})={\lfloor \frac{m}{2}\rfloor}$ for each column $c^i$ of $C$.
Suppose there is a column $c^i$ of $C$ such that $wt(c^{i})<{\lfloor \frac{m}{2}\rfloor}$. Without loss of generality, we can assume $wt(c^{n}) <{\lfloor \frac{m}{2}\rfloor} $.
By Corollary $\ref{Cor:Hall01}$, there is a $2$-covering array $C'$ such that $wt(c'^{i}) = {\lfloor \frac{m}{2}\rfloor}$ for $i \neq n$ and $wt(c'^{n}) = {\lfloor \frac{m}{2}\rfloor}-1$. Let $D$ be the $m \times (n-1)$ submatrix of $C'$ obtained from $C'$ by deleting the last
column. Since $\mbox{supp}(c'^{n}) \nsubseteq \mbox{supp}(c'^{i})$ for any $i \neq n$ and $D$ is a $2$-covering array whose first row is all 1's vector, the number of columns of $D$ is at most ${{{m-1}\choose{{\lfloor \frac{m}{2}\rfloor}-1}}-(m-{\lfloor \frac{m}{2}\rfloor}+1)} = {{{m-1}\choose{{\lfloor \frac{m}{2}\rfloor}-1}}+m-3{\lfloor \frac{m}{2}\rfloor}}-3$. However, the number of columns of $D$ is $n-1$ which is greater than or equal to ${{{m-1}\choose{{\lfloor \frac{m}{2}\rfloor}-1}}+m-3{\lfloor \frac{m}{2}\rfloor}}$. It is a contradiction.\\
Let $m \geq 4 $ be even. We claim that $wt(c^i) = {\lfloor \frac{m}{2}\rfloor}$ for each column $c^i$ of $C$. If there is a column $c$ of $C$ with $wt(c) \neq {\lfloor \frac{m}{2}\rfloor}$, then by the same argument as one in the odd case, we may assume that $wt(c^i) = {\lfloor \frac{m}{2}\rfloor}$ and $wt(c^n) < {\lfloor \frac{m}{2}\rfloor}$ for $i \neq n$. By Corollary $\ref{Cor:Hall01}$, there is a $2$-covering array $C'$ such that $wt(c'^{i}) = {\lfloor \frac{m}{2}\rfloor}$ for $i \neq n$, $wt(c'^{n}) = {\lfloor \frac{m}{2}\rfloor}-1$, and $\mbox{supp}(c^j) \subseteq \mbox{supp}(c'^j)$ for all $j$. By the definition of equivalence of covering arrays, we may also assume that $1 \in \mbox{supp}(c'^n)$.
Since $wt(c'^{i}) = {\lfloor \frac{m}{2}\rfloor}$ for $i \neq n$ and $m$ is even, $wt(\overline{c'^i}) = {\lfloor \frac{m}{2}\rfloor}$ for $i \neq n$. After inversions of suitable columns of $C'$, we can get an $m \times n$ $2$-covering array $C''$ such that $wt(c''^{i}) = {\lfloor \frac{m}{2}\rfloor}$ for $i \neq n$, $wt(c''^{n}) = {\lfloor \frac{m}{2}\rfloor}-1$, and $1 \in \bigcap_{1
\leq i \leq n}{\mbox{supp}(c''^{i})}$. By the same argument as one when $m$ is odd, we can also get a contradiction.
\end{proof}

\begin{cor}{\label{Cor:max2ca1}}
Every maximal $2$-covering array of size $m$ is equivalent to the
standard maximal $2$-covering array of size $m$. Thus, maximal $2$-covering arrays of size $m$ are unique.
\end{cor}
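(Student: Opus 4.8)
The plan is to obtain this as an immediate consequence of Theorem \ref{Thm:equi} together with Theorem \ref{Thm:Sperner}, so the argument is essentially bookkeeping once those are in hand. First, recall that a maximal $2$-covering array $C$ of size $m$ is by definition one of degree $n=\overline{CAN}(2,m,2)$, which by Theorem \ref{Thm:Sperner} equals ${{m-1}\choose{\lfloor \frac{m}{2}\rfloor-1}}$. To be allowed to apply Theorem \ref{Thm:equi} to $C$ I need the hypothesis $n>{{{m-1}\choose{\lfloor \frac{m}{2}\rfloor-1}}+m-3{\lfloor \frac{m}{2}\rfloor}}$; since $n$ is exactly the binomial coefficient on the right, this is equivalent to the purely numerical statement $3\lfloor \frac{m}{2}\rfloor>m$. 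A one-line check settles it for every $m\geq4$: writing $m=2k$ it reads $3k>2k$, and writing $m=2k+1$ it reads $3k>2k+1$, i.e.\ $k\geq2$, which is precisely $m\geq5$ (the case $m=4$ being covered by the even subcase). So the hypothesis of Theorem \ref{Thm:equi} is satisfied automatically at the extremal degree, and no separate treatment of small $m$ is required.

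With the hypothesis verified, Theorem \ref{Thm:equi} tells us that $C$ is equivalent to a matrix $C'$ obtained by deleting some columns from the standard maximal $2$-covering array $S$ of size $m$. But $S$ has exactly ${{m-1}\choose{\lfloor \frac{m}{2}\rfloor-1}}$ columns, while $C'$, being equivalent to $C$, has degree $n={{m-1}\choose{\lfloor \frac{m}{2}\rfloor-1}}$ as well. Hence the collection of deleted columns is empty, so $C'$ is obtained from $S$ by at most a permutation of columns, which is one of the allowed equivalence operations. Therefore $C$ is equivalent to $S$.

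For the final uniqueness assertion I would then invoke transitivity and symmetry of the equivalence relation: any two maximal $2$-covering arrays of size $m$ are each equivalent to $S$, hence equivalent to one another, so maximal $2$-covering arrays of size $m$ are unique up to equivalence. I do not expect any real obstacle here — the substantive content lives entirely in Theorem \ref{Thm:equi}; the only point that deserves a moment's care is confirming that the strict inequality appearing in its hypothesis is forced at $n={{m-1}\choose{\lfloor \frac{m}{2}\rfloor-1}}$ for all $m\geq4$, which is the short computation above.
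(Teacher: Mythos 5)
Your proposal is correct and follows exactly the route the paper intends: the corollary is an immediate consequence of Theorem \ref{Thm:equi}, and your verification that the hypothesis $n>{{m-1}\choose{\lfloor \frac{m}{2}\rfloor-1}}+m-3\lfloor \frac{m}{2}\rfloor$ reduces to $3\lfloor \frac{m}{2}\rfloor>m$, which holds for all $m\geq 4$, is the only checking needed. The observation that no columns can be deleted at the extremal degree, plus transitivity of equivalence, completes the argument just as the paper implies.
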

\begin{cor}{\label{Cor:max2ca2}}
If $m\geq6$ and $n={{m-1}\choose{\lfloor \frac{m}{2} \rfloor}-1}-1$,
then every $2$-covering array $C$ of size $m$ and degree $n$ is equivalent to a $2$-covering array $C'$ of size $m$ and degree $n$, where $C'$ is made from deleting a column of
the standard maximal binary $2$-covering array of size $m$. Thus, $2$-covering arrays of size $m \geq 6$ and degree $n={{m-1}\choose{\lfloor \frac{m}{2} \rfloor}-1}-1$ are unique.
\end{cor}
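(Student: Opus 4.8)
The plan is to obtain this as a quick consequence of Theorem~\ref{Thm:equi}, together with a short symmetry argument showing that it does not matter which column one deletes from the standard maximal array. First I would check that the hypothesis $n={{m-1}\choose{\lfloor \frac{m}{2}\rfloor-1}}-1$ satisfies the numerical condition needed to invoke Theorem~\ref{Thm:equi}, namely $n>{{m-1}\choose{\lfloor \frac{m}{2}\rfloor-1}}+m-3\lfloor \frac{m}{2}\rfloor$. After cancelling the binomial coefficient this is just $3\lfloor \frac{m}{2}\rfloor>m+1$, which is an easy case check (split into $m$ even and $m$ odd) and holds for all $m\geq 6$. Granting this, Theorem~\ref{Thm:equi} tells us that $C$ is equivalent to a $2$-covering array $C'$ obtained from the standard maximal binary $2$-covering array $S$ of size $m$ by deleting exactly ${{m-1}\choose{\lfloor \frac{m}{2}\rfloor-1}}-n=1$ column.

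Next I would prove that $S\setminus\{c\}$ and $S\setminus\{c'\}$ are equivalent for any two columns $c,c'$ of $S$, which finishes the proof. Recall that the columns of $S$, viewed via their supports, are exactly the $\lfloor \frac{m}{2}\rfloor$-subsets of $[m]$ containing $1$ (the coordinate in which every column is $1$, by construction of $S$). A permutation of the rows $2,\dots,m$ (an operation of type (a)) fixes the all-ones row, hence sends each column $d$ of $S$ to another column of $S$, which I denote $\pi(d)$, and the induced action on this family of subsets is transitive. Choosing such a permutation $\pi$ with $\pi(c)=c'$, the array obtained from $S\setminus\{c\}$ by permuting its rows according to $\pi$ has column set $\{\pi(d):d\text{ a column of }S,\ d\neq c\}=(\text{columns of }S)\setminus\{c'\}$; a suitable permutation of the columns (type (b)) then turns it into $S\setminus\{c'\}$. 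Hence every one-column deletion of $S$ is equivalent to a fixed one, so $C$ is too.

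I do not expect a real obstacle; the argument is essentially bookkeeping on top of Theorem~\ref{Thm:equi}. The two places needing a little care are the numerical inequality (which is the reason a lower bound on $m$ must be imposed) and the transitivity step --- specifically, checking that $\pi$ restricts to a bijection of the whole column set of $S$ and carries $c$ to $c'$, so that deleting $c$ and then applying $\pi$ yields, up to reordering columns, the array $S$ with $c'$ deleted.
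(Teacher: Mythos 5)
Your proposal is correct and follows exactly the route the paper intends: the corollary is stated as an immediate consequence of Theorem~\ref{Thm:equi}, and you have correctly supplied the two omitted details, namely the verification that $3\lfloor \frac{m}{2}\rfloor>m+1$ for $m\geq 6$ (so the hypothesis of Theorem~\ref{Thm:equi} holds) and the transitivity of the row-permutation action (fixing the all-ones first row) on the columns of the standard maximal array, which shows all one-column deletions are equivalent.
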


\begin{remark}
When $m$ is odd,  Corollary \ref{Cor:max2ca1} and \ref{Cor:max2ca2} are also shown in \cite{Oh}.
\end{remark}

Using Corollary \ref{Cor:max2ca1} and \ref{Cor:max2ca2}, and Proposition $\ref{Prop:rowdistance}$, we can classify the number of nonequivalent $2$-covering arrays satisfying $CAN(2,n,2)=6$.

$$
\begin{array}{ccccccccccccccc}
n & \vline & 6 & 7 & 8 & 9 & 10\\
\hline
CA(6;2,n,2) & \vline & 4 & 3 & 1 & 1 & 1
\end{array}
$$
$$
\textrm{Table } 1: \textrm{The number of covering arrays }CA(6;2,n,2).
$$

\section{Lower bounds of some binary $3$-covering arrays}

In this section, we will give a new lower bound of size $m$ for a binary $3$-covering array of degree $n$.
Roux \cite{Rou} gave two useful bounds of $CAN(t,n,q)$. We will improve the lower bound of $CAN(t,n,q)$ given by Roux when $t = 3$ and $q = 2$.

We introduce the Roux's bound without proof. See Theorem 6 in \cite{Rou} for a proof.

\begin{theorem}{\label{Thm:Roux}}
For any positive integers $t,n$ and $q$,
\begin{eqnarray*}
CAN(t+1,n+1,q) &\geq& q CAN(t,n,q),\nonumber\\
CAN(3,2n,2) &\leq& CAN(3,n,2) + CAN(2,n,2).
\end{eqnarray*}
\end{theorem}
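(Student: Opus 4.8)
The statement has two parts of different character --- a lower bound on $CAN(t+1,n+1,q)$ and an upper bound (a construction) for $CAN(3,2n,2)$ --- so I would treat them separately. For the first, my plan is a short counting argument with residual matrices. Let $C$ be an optimal $(t+1)$-covering array over $B_q$ of degree $n+1$, so $C$ has exactly $CAN(t+1,n+1,q)$ rows. The entries of the last column $c^{n+1}$ partition the rows of $C$ into $q$ classes, one per value $v\in B_q$; the class corresponding to $v$, with the column $c^{n+1}$ deleted, is precisely $Res(C;c^{n+1}=v)$, which by Proposition \ref{Prop:residual} is a $t$-covering array over $B_q$ of degree $n$ and hence has at least $CAN(t,n,q)$ rows. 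Summing over the $q$ classes gives $CAN(t+1,n+1,q)\ge q\,CAN(t,n,q)$; there is nothing subtle here.

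For the second inequality my plan is an explicit doubling construction. Let $A$ be an optimal binary $3$-covering array of degree $n$, with columns $A_1,\dots,A_n$ and $a=CAN(3,n,2)$ rows, and let $B$ be an optimal binary $2$-covering array of degree $n$, with columns $B_1,\dots,B_n$ and $b=CAN(2,n,2)$ rows. I would form the $(a+b)\times 2n$ binary matrix
\[ D=\begin{pmatrix}A & A\\ B & \overline{B}\end{pmatrix}, \]
so that for $i\in[n]$ the $i$-th column of $D$ is $A_i$ stacked over $B_i$ and the $(n+i)$-th column is $A_i$ stacked over $\overline{B_i}$; call these columns $d^i$ and $d^{i'}$. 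Since $D$ has $2n$ columns and $a+b=CAN(3,n,2)+CAN(2,n,2)$ rows, it suffices to verify that $D$ is a $3$-covering array, and then $CAN(3,2n,2)\le a+b$ follows.

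To verify it I would take three distinct columns of $D$ and split into cases by the number of distinct underlying indices of $[n]$ they use; since each index labels only the two columns $d^i,d^{i'}$, this number is $2$ or $3$. If the three columns use three distinct indices $i,j,k$, then whichever of $d^{\bullet},d^{\bullet'}$ is chosen at each index, the top block of $D$ on these columns is a copy of columns $i,j,k$ of $A$, so all eight binary triples occur there since $A$ has strength $3$. If the three columns use only two indices, then after reordering them they are $d^i,d^j,d^{i'}$ with $i\ne j$. The rows of the top block on these columns have the form $(\alpha,\beta,\alpha)$ as $(\alpha,\beta)$ runs over the rows of $A$ in columns $i,j$; since $A$ is in particular a $2$-covering array, all four pairs $(\alpha,\beta)$ occur, so the top block realizes exactly the four triples whose first and third entries agree. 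The rows of the bottom block have the form $(\gamma,\delta,\overline{\gamma})$ as $(\gamma,\delta)$ runs over the rows of $B$ in columns $i,j$; since $B$ is a $2$-covering array, all four pairs occur, so the bottom block realizes exactly the four triples whose first and third entries disagree. Together these exhaust all eight triples, so the three columns are covered; the remaining two-index configurations differ only in which coordinate slot carries the repeated index and go through the same way.

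I expect this last case to be the only real obstacle: everything else is a direct appeal to definitions, but here one must see that the repeated index forces two of the three coordinates equal within the top copy $A$, while the complement makes exactly those two coordinates unequal within the bottom copy $\overline{B}$, so that the $2$-covering properties of $A$ and of $B$ split the eight target triples into complementary halves of size four. It is also worth recording in passing that strength $3$ of $A$ is used only in the three-distinct-index case (strength $2$ suffices otherwise) and that $n\ge 3$ is implicitly assumed so that $A$ makes sense. Beyond these observations the argument is routine.
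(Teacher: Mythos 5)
Your proposal is correct and complete. Note that the paper itself gives no proof of this theorem: it is quoted from Roux's thesis (``See Theorem 6 in \cite{Rou} for a proof''), so there is no internal argument to compare against. Both halves of your argument are the standard ones. For the lower bound, partitioning the rows of an optimal $(t+1)$-covering array of degree $n+1$ according to the $q$ values in one fixed column and observing that each class is a residual $t$-covering array of degree $n$ (Proposition \ref{Prop:residual}) is exactly the right counting argument. For the upper bound, the doubled array $D=\left(\begin{smallmatrix}A&A\\ B&\overline{B}\end{smallmatrix}\right)$ is the classical Roux construction, and your case analysis is the correct one: the only nontrivial case is when the three chosen columns involve a repeated index of $[n]$, where the top block $A$ supplies precisely the four triples in which the two coordinates over the repeated index agree, while the bottom block $(B\mid\overline{B})$ supplies precisely the four in which they disagree, using only strength $2$ of $A$ and of $B$; strength $3$ of $A$ is needed only when the three indices are distinct. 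Your remarks that the remaining two-index configurations are handled identically after permuting coordinate slots, and that $n\ge 3$ is implicitly required for $A$ to exist, are both accurate. I see no gap.
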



To improve the lower bound $CAN(3,n,2)$, we need some lemmas.

\begin{lemma}{\label{Lem:dist}}
Let $C$ be a $2m \times (n+1)$ binary $3$-covering array. If ${{{m-1}\choose{\lfloor \frac{m}{2}
\rfloor-1}}+m-3\lfloor \frac{m}{2}
\rfloor} < n \leq
{{m-1}\choose{\lfloor \frac{m}{2}
\rfloor-1}}$ and $m \geq 5$, then $wt(c^i) = m$ for each column $c^i$ of $C$. Moreover, $d(c^i,c^j)=2{\lfloor \frac{m}{2}
\rfloor}$ or $2{\lceil \frac{m}{2} \rceil}$ for any distinct columns $c^i$ and $c^j$ of $C$.
\end{lemma}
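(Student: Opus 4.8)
The plan is to exploit the two \emph{halves} of a $3$-covering array that arise when we fix the value in a single column. Let $C$ be a $2m\times(n+1)$ binary $3$-covering array and fix any column, say $c^{n+1}$. Consider the two residual matrices $C_0=Res(C;c^{n+1}=0)$ and $C_1=Res(C;c^{n+1}=1)$; by Proposition~\ref{Prop:residual} each is a binary $2$-covering array of degree $n$, and moreover since $C$ is a $3$-covering array, for any two columns the induced restriction must realise all four patterns \emph{within each half}. In particular each $C_\varepsilon$ has size $\geq \overline{CAN}(2,\cdot,\cdot)^{-1}$ forcing, by Theorem~\ref{Thm:Sperner} and the hypothesis $n>\binom{m-1}{\lfloor m/2\rfloor-1}+m-3\lfloor m/2\rfloor$, that each $C_\varepsilon$ has exactly $m$ rows; combined with $2m$ rows total this pins down $|C_0|=|C_1|=m$. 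Then Theorem~\ref{Thm:equi} applies to each half: $C_0$ (resp. $C_1$) is, up to the equivalence operations, obtained by deleting columns from the standard maximal $2$-covering array of size $m$, and consequently every column of $C_0$ has weight $\lfloor m/2\rfloor$ (if $m$ even) or $\lfloor m/2\rfloor$ or $\lfloor m/2\rfloor+1$ (if $m$ odd), and similarly for $C_1$.

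Next I would assemble the weight information back in $C$. A column $c^i$ of $C$ ($i\leq n$) splits as $c^i=(a^i\,|\,b^i)$ where $a^i$ is the restriction to the rows with $c^{n+1}=1$ and $b^i$ to the rows with $c^{n+1}=0$ (after permuting rows); so $wt(c^i)=wt(a^i)+wt(b^i)$. Here I must be slightly careful: Theorem~\ref{Thm:equi} only determines the weights of the columns of each $C_\varepsilon$ \emph{after} possibly complementing columns, so what we really know is that in each half each column has weight in $\{\lfloor m/2\rfloor,\lceil m/2\rceil\}$ \emph{or} its complementary value $\{\lfloor m/2\rfloor,\lceil m/2\rceil\}$ --- but since $\lfloor m/2\rfloor+\lceil m/2\rceil=m$, the set of possible weights of a column of $C_\varepsilon$ is exactly $\{\lfloor m/2\rfloor,\lceil m/2\rceil\}$ regardless of complementation when $m$ is odd, and is $\{m/2\}$ when $m$ is even. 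Hence $wt(c^i)\in\{2\lfloor m/2\rfloor,\,\lfloor m/2\rfloor+\lceil m/2\rceil,\,2\lceil m/2\rceil\}=\{2\lfloor m/2\rfloor,\,m,\,2\lceil m/2\rceil\}$ a priori. To force $wt(c^i)=m$ exactly, I would invoke Proposition~\ref{Prop:weight} applied to the $3$-covering array $C$ of size $2m$: it gives $CAN(2,n,2)\le wt(c^i)\le 2m-CAN(2,n,2)$, and the hypothesis on $n$ together with Theorem~\ref{Thm:Sperner} makes $CAN(2,n,2)$ large enough --- specifically $CAN(2,n,2)>\lceil m/2\rceil\cdot 2$ is too weak, so instead I would argue directly: if $wt(c^i)=2\lceil m/2\rceil>m$ (so $m$ odd), then $wt(a^i)=wt(b^i)=\lceil m/2\rceil$; but then the complements $\overline{a^i},\overline{b^i}$ have weight $\lfloor m/2\rfloor$, and one checks that replacing these columns and re-running the Hilton--Milner / Theorem~\ref{Thm:equi} count on the half forces a column count strictly below $n$ --- contradiction. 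The symmetric argument rules out $wt(c^i)=2\lfloor m/2\rfloor<m$. So $wt(c^i)=m$ for all $i\leq n$, and applying the same reasoning with a different fixed column (or noting $c^{n+1}$ itself has $m$ ones in each half only if... ) handles $c^{n+1}$, giving $wt(c^i)=m$ for \emph{every} column.

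For the distance claim, fix distinct columns $c^i,c^j$ of $C$. Consider the residual matrices obtained by fixing $c^i$: $C^{(i)}_1=Res(C;c^i=1)$ and $C^{(i)}_0=Res(C;c^i=0)$, each an $m\times n$ $2$-covering array by the weight-$m$ fact just proved, hence each governed by Theorem~\ref{Thm:equi}. The column of $C^{(i)}_1$ coming from $c^j$ has weight in $\{\lfloor m/2\rfloor,\lceil m/2\rceil\}$, i.e. $|supp(c^i)\cap supp(c^j)|\in\{\lfloor m/2\rfloor,\lceil m/2\rceil\}$. Likewise, fixing $c^j=1$ shows $|supp(c^i)\cap supp(c^j)|\in\{\lfloor m/2\rfloor,\lceil m/2\rceil\}$ (consistent), and fixing $c^i=0$ controls $|supp(\overline{c^i})\cap supp(c^j)|=wt(c^j)-|supp(c^i)\cap supp(c^j)|=m-|supp(c^i)\cap supp(c^j)|$, which lands in $\{\lfloor m/2\rfloor,\lceil m/2\rceil\}$ automatically. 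Writing $p=|supp(c^i)\cap supp(c^j)|$, since $wt(c^i)=wt(c^j)=m$ out of $2m$ rows we get $d(c^i,c^j)=2(m-p)$, and with $p\in\{\lfloor m/2\rfloor,\lceil m/2\rceil\}$ this is $2\lceil m/2\rceil$ or $2\lfloor m/2\rfloor$, as claimed. The main obstacle I anticipate is the careful bookkeeping in the second paragraph: pinning down that each residual half really has exactly $m$ rows (not fewer, using the degree lower bound, and not more, using $2m$ total) and that the complementation freedom in Theorem~\ref{Thm:equi} cannot be used to smuggle in a column of the "wrong" weight $2\lceil m/2\rceil$ or $2\lfloor m/2\rfloor$; this is where one genuinely needs the strict-inequality refinement in Corollary~\ref{Cor:hil}/Theorem~\ref{Thm:Hilton} together with the precise hypothesis $n>\binom{m-1}{\lfloor m/2\rfloor-1}+m-3\lfloor m/2\rfloor$.
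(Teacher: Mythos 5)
Your third paragraph (the distance claim) is correct and is exactly the paper's argument: once every column has weight $m$, the column of $Res(C;c^i=1)$ induced by $c^j$ has weight $|\mathrm{supp}(c^i)\cap\mathrm{supp}(c^j)|\in\{\lfloor m/2\rfloor,\lceil m/2\rceil\}$ by Theorem~\ref{Thm:equi}, and $d(c^i,c^j)=2m-2|\mathrm{supp}(c^i)\cap\mathrm{supp}(c^j)|$. The problem is in how you establish $wt(c^i)=m$. You prove it only for the one fixed column $c^{n+1}$ (correctly in outline: if $wt(c^{n+1})=k\neq m$ then one of the two residuals is a $2$-covering array of degree $n$ with $k'\leq m-1$ rows, so $n\leq\binom{k'-1}{\lfloor k'/2\rfloor-1}\leq\binom{m-2}{\lfloor(m-1)/2\rfloor-1}\leq\binom{m-1}{\lfloor m/2\rfloor-1}+m-3\lfloor m/2\rfloor$, contradicting the hypothesis --- note you do need to actually verify this last inequality for $m\geq 5$, which you never state). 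But for the remaining columns $c^i$, $i\leq n$, your argument breaks down. You correctly observe that Theorem~\ref{Thm:equi} only controls weights up to complementation, so for odd $m$ each half-column $a^i,b^i$ may legitimately have weight $\lceil m/2\rceil$, leaving $wt(c^i)\in\{2\lfloor m/2\rfloor,m,2\lceil m/2\rceil\}$. Your proposed resolution --- ``replacing these columns and re-running the Hilton--Milner / Theorem~\ref{Thm:equi} count on the half forces a column count strictly below $n$'' --- does not work: Theorem~\ref{Thm:equi} explicitly allows an $m\times n$ $2$-covering array (for odd $m$, $n$ in this range) to have columns of weight $\lceil m/2\rceil$, so having $wt(a^i)=wt(b^i)=\lceil m/2\rceil$ produces no contradiction from either half alone, and no ``count strictly below $n$'' materialises. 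This step is a genuine gap, not just missing bookkeeping.

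The repair is to abandon the two-halves bookkeeping entirely and apply your own first-paragraph argument to \emph{each} column individually, which is what the paper does: if $wt(c^i)=k\neq m$, complement $c^i$ if necessary so $k<m$; then $Res(C;c^i=1)$ is a $k\times n$ $2$-covering array by Proposition~\ref{Prop:residual}, whence by Theorem~\ref{Thm:Sperner}
\[
n\;\leq\;\binom{k-1}{\lfloor k/2\rfloor-1}\;\leq\;\binom{m-2}{\lfloor (m-1)/2\rfloor-1}\;\leq\;\binom{m-1}{\lfloor m/2\rfloor-1}+m-3\Bigl\lfloor \frac m2\Bigr\rfloor,
\]
contradicting the hypothesis on $n$. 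This gives $wt(c^i)=m$ for every column in one stroke, after which your distance argument goes through verbatim.
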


\begin{proof}
Let $C$ be a $2m \times (n+1)$ binary $3$-covering array where $m$ and $n$ are satisfying the assumption.
We claim that $wt(c^i) = m$ for any column $c^i$ of $C$.\\
Suppose that there is a column, say $c^1$, of $C$ whose weight is not equal to $m$.
By the definition of equivalence, we may assume that $wt(c^1) = k < m$. Then $Res(C; c^1 = 1)$ is a $k \times n$ binary $2$-covering array.
Since $\overline{CAN}(2,k,2) = {{k-1} \choose {\lfloor \frac{k}{2} \rfloor} - 1}$ and $k < m$, we have
$$n \leq {{k-1} \choose {\lfloor \frac{k}{2} \rfloor - 1}} \leq {{m-2} \choose {\lfloor \frac{m-1}{2} \rfloor - 1}}.$$
After a direct computation, it can be easily shown that
$${{m-2} \choose {\lfloor \frac{m-1}{2} \rfloor - 1}} \leq {{{m-1}\choose{\lfloor \frac{m}{2}
\rfloor-1}}+m-3\lfloor \frac{m}{2}
\rfloor}  \ \mbox{if} \ \ m \geq 5.$$
It is a contradiction to the condition of $m$ and $n$. Therefore, $wt(c^i) = m$ for any column $c^i$ of $C$. For each $i$, $Res(C; c^i = 1)$ is an $m \times n$ binary $2$-covering array with ${{{m-1}\choose{\lfloor \frac{m}{2}
\rfloor-1}}+m-3\lfloor \frac{m}{2} \rfloor} < n \leq {{m-1}\choose{\lfloor \frac{m}{2} \rfloor-1}}$. By Theorem \ref{Thm:equi}, each column of $Res(C; c^i = 1)$ has
weight $\lfloor \frac{m}{2} \rfloor$ or $\lceil \frac{m}{2} \rceil$. Since $wt(c^i) = m$ for each column $c^i$ of $C$, $d(c^i, c^j) = 2 \lfloor \frac{m}{2} \rfloor$ or $2 \lceil \frac{m}{2} \rceil$
for any distinct columns $c^i$ and $c^j$ of $C$.
\end{proof}

After a direct computation, we have the following lemma.

\begin{lemma}{\label{Lem:ineq1}}
The followings are hold.
\begin{enumerate}[$(a)$]
\item If $l \geq 4$, then ${{2l-1}\choose{l-1}} > 5l$,
\item If $l \geq 5$, then ${{2l}\choose{l-1}} \geq 4l^2$.
\end{enumerate}
\end{lemma}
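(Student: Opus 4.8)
The plan is to prove both inequalities by induction on $l$, using the fact that consecutive binomial coefficients of this shape grow by a factor comfortably larger than the growth of the corresponding right-hand sides, so that the induction step reduces to an elementary polynomial inequality.

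For part $(a)$, set $a_l = \binom{2l-1}{l-1}$. First I would compute the ratio
$$\frac{a_{l+1}}{a_l}=\frac{\binom{2l+1}{l}}{\binom{2l-1}{l-1}}=\frac{(2l+1)(2l)}{(l+1)l}=\frac{2(2l+1)}{l+1},$$
and observe that $\frac{2(2l+1)}{l+1}=4-\frac{2}{l+1}\geq 3$ for all $l\geq 1$, so $a_{l+1}\geq 3a_l$. The base case $l=4$ gives $a_4=\binom{7}{3}=35>20=5\cdot 4$. For the induction step, if $a_l>5l$ then $a_{l+1}\geq 3a_l>15l\geq 5(l+1)$, the last inequality holding for $l\geq 1$. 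This closes $(a)$.

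For part $(b)$, set $b_l=\binom{2l}{l-1}$ and compute
$$\frac{b_{l+1}}{b_l}=\frac{\binom{2l+2}{l}}{\binom{2l}{l-1}}=\frac{(2l+2)(2l+1)}{l(l+2)}.$$
The base case $l=5$ gives $b_5=\binom{10}{4}=210\geq 100=4\cdot 5^2$. For the induction step, assuming $b_l\geq 4l^2$, it suffices to verify $4l^2\cdot\frac{(2l+2)(2l+1)}{l(l+2)}\geq 4(l+1)^2$; using $2l+2=2(l+1)$ and cancelling, this reduces to $2l(2l+1)\geq (l+1)(l+2)$, i.e. $3l^2-l-2=(3l+2)(l-1)\geq 0$, which holds for all $l\geq 1$. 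Hence $b_{l+1}\geq 4(l+1)^2$, completing $(b)$.

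The only "obstacle" is purely computational: correctly simplifying the two ratios of binomial coefficients and checking the resulting elementary inequalities; there is no conceptual content. Alternatively one could substitute the identities $\binom{2l-1}{l-1}=\tfrac12\binom{2l}{l}$ and $\binom{2l}{l-1}=\tfrac{l}{l+1}\binom{2l}{l}$ together with a crude lower bound like $\binom{2l}{l}\geq \tfrac{4^l}{2l}$, but the smallest admissible values of $l$ would still need to be checked by hand, so the direct induction above is the cleanest route.
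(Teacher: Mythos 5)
Your proof is correct: both ratio computations simplify as you claim, the base cases check out, and the induction steps reduce to the elementary inequalities $10l\geq 5$ and $(3l+2)(l-1)\geq 0$, which hold in the stated ranges. The paper itself offers no proof (it dismisses the lemma with ``after a direct computation''), so your induction simply supplies the elementary verification the authors omitted.
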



\begin{lemma}{\label{Lem:ineq4}}
If $l \geq 4$ and ${{2l}\choose{l-1}}-l+2 \leq n \leq
{{2l}\choose{l-1}}$, then ${\frac{n}{2} - \frac{\sqrt{n}}{2}}
> {{2l-1}\choose{l-2}}$.
\end{lemma}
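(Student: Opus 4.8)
The plan is to reduce the statement to a single inequality at the left endpoint of the allowed range for $n$, and then estimate crudely. First I would note that $g(x)=\tfrac{x}{2}-\tfrac{\sqrt{x}}{2}$ is strictly increasing for $x\geq 1$ (its derivative $\tfrac12-\tfrac1{4\sqrt x}$ is positive there), so it suffices to prove the inequality for the smallest admissible value $n=\binom{2l}{l-1}-l+2$. Next I would record the elementary identity
$$\binom{2l-1}{l-2}=\frac{l-1}{2l}\binom{2l}{l-1},$$
which follows from Pascal's rule $\binom{2l}{l-1}=\binom{2l-1}{l-1}+\binom{2l-1}{l-2}$ together with the ratio $\binom{2l-1}{l-1}=\tfrac{l+1}{l-1}\binom{2l-1}{l-2}$ of consecutive binomial coefficients.

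Writing $N=\binom{2l}{l-1}$ and substituting these into the inequality to be proved, then multiplying by $2$ and using $\tfrac{(l-1)N}{l}=N-\tfrac{N}{l}$, the claim becomes
$$\frac{N}{l}+2-l>\sqrt{N-l+2}.$$
Both sides are positive (the right side visibly; the left because $N/l$ exceeds $l$ for $l\geq 4$, as I will check), so I may square. Using $\tfrac{N}{l}+2-l>\tfrac{N}{l}-l>0$ gives the lower bound
$$\left(\frac{N}{l}+2-l\right)^{2}>\left(\frac{N}{l}-l\right)^{2}=\frac{N^{2}}{l^{2}}-2N+l^{2},$$
so it remains to show $\tfrac{N^{2}}{l^{2}}-2N+l^{2}\geq N-l+2$, i.e. essentially $\tfrac{N^{2}}{l^{2}}\geq 3N$.

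For $l\geq 5$, Lemma \ref{Lem:ineq1}$(b)$ gives $N=\binom{2l}{l-1}\geq 4l^{2}$, hence $N^{2}/l^{2}\geq 4N$, and therefore $\tfrac{N^{2}}{l^{2}}-2N+l^{2}\geq 2N+l^{2}>N>N-l+2$, which closes that case. The single remaining case $l=4$, where $N=\binom{8}{3}=56$, I would dispatch by direct substitution into $\tfrac{N}{l}+2-l>\sqrt{N-l+2}$, namely $12>\sqrt{54}$. I do not anticipate a genuine obstacle: the only points needing care are handling the left endpoint $n=\binom{2l}{l-1}-l+2$ correctly and keeping the binomial identity clean, after which the final estimate is deliberately wasteful and robust.
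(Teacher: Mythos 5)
Your proof is correct and follows essentially the same route as the paper's: both reduce to the left endpoint $n=\binom{2l}{l-1}-l+2$ via monotonicity of $x\mapsto \tfrac{x}{2}-\tfrac{\sqrt{x}}{2}$, both rest on the identity $\binom{2l-1}{l-2}=\tfrac{l-1}{2l}\binom{2l}{l-1}$, both invoke Lemma \ref{Lem:ineq1}$(b)$ for $l\geq 5$, and both dispatch $l=4$ by direct computation. The only difference is algebraic packaging: you rearrange to $\tfrac{N}{l}+2-l>\sqrt{N-l+2}$ and square, while the paper bounds $\sqrt{N-l+2}\leq\sqrt{N}$ and factors out $\tfrac12\sqrt{N}$.
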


\begin{proof}
When $l=4$ and $54 \leq n \leq 56$, it holds.
Let $f(x)={\frac{1}{2}x - \frac{1}{2}\sqrt{x}}$. Since
$f'(x)={\frac{1}{2} - \frac{1}{4\sqrt{x}}} > 0$ for any $x >
\frac{1}{4}$, it is enough to show that $f(x) >
{{2l-1}\choose{l-2}}$
when $l \geq 5$ and $x={{2l}\choose{l-1}}-l+2$. By Lemma ${\ref{Lem:ineq1}}$ $(b)$,
\begin{eqnarray}
f(x)-{{2l-1}\choose{l-2}}&=&{\frac{x}{2} - \frac{\sqrt{x}}{2}} - {{2l-1}\choose{l-2}}\nonumber\\
&=&{\frac{1}{2}}\left\{\left ({{2l}\choose{l-1}}-l+2\right ) -
\sqrt{{{2l}\choose{l-1}}-l+2}\right\} - {{2l-1}\choose{l-2}}\nonumber\\
&>&\left (\frac{1}{2}{{2l}\choose{l-1}} - {{2l-1}\choose{l-2}} -
\frac{l-2}{2} \right ) - \frac{1}{2}\sqrt{{{2l}\choose{l-1}}}\nonumber\\
&=&\left (\frac{1}{2l}{{2l}\choose{l-1}} - \frac{1}{2}\sqrt{{{2l}\choose{l-1}}}\right ) -
\frac{l-2}{2} \nonumber\\
&=& \frac{1}{2} \sqrt{{{2l}\choose{l-1}}} \left (\frac{1}{l} \sqrt{{{2l}\choose{l-1}}} -1 \right ) - \frac{l-2}{2}\nonumber\\
&\geq& l - \frac{l-2}{2} = \frac{l+2}{2} > 0 .
\end{eqnarray}
\end{proof}

Nurmela \cite{Nur} found a $15 \times 12$ binary $3$-covering array by tabu search and Colbourn et al. \cite{Col} proved $CAN(3,12,2) = 15$ by a computer search. Hence we deduce that $CAN(3,15,2) \geq 15$ and $CAN(3,16,2) \geq 15$. We will give a combinatorial proof of $CAN(3,15,2) \geq 15$ and $CAN(3,16,2) \geq 15$.

\begin{lemma}{\label{Lem:15by16}}
The covering array number of strength $3$ and degree $16$ over $B_2$ is greater than or equal to $15$, i.e. $CAN(3,16,2) \geq 15$.
\end{lemma}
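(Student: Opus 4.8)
The plan is to show there is no $CA(14;3,16,2)$; since $CAN(3,16,2)\ge 14$ already follows from Roux's bound (Theorem \ref{Thm:Roux}) together with $CAN(2,15,2)=7$ (Theorem \ref{Thm:Sperner} gives $\overline{CAN}(2,7,2)=\binom{6}{2}=15>10=\overline{CAN}(2,6,2)$), and since any $CA(m;3,16,2)$ with $m\le 14$ pads up to a $CA(14;3,16,2)$ by repeating a row, this yields $CAN(3,16,2)\ge 15$. So I would assume for contradiction that $C$ is a $14\times 16$ binary $3$-covering array and invoke Lemma \ref{Lem:dist} with $m=7$, $n=15$ (its hypotheses $\binom{6}{2}+7-9=13<15\le 15=\binom{6}{2}$ and $m\ge 5$ hold): every column $c^i$ then has weight $7$, and for distinct $i,j$ one has $a_{ij}:=|\mbox{supp}(c^i)\cap\mbox{supp}(c^j)|=\frac12\big(14-d(c^i,c^j)\big)\in\{3,4\}$.

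Next I would extract structure from the two residuals on the first column. After permuting rows, assume $\mbox{supp}(c^1)=\{1,\dots,7\}$, so the other rows form $\{8,\dots,14\}$. By Proposition \ref{Prop:residual}, $R:=Res(C;c^1=1)$ (on rows $1,\dots,7$) and $R':=Res(C;c^1=0)$ (on rows $8,\dots,14$) are $7\times 15$ binary $2$-covering arrays, hence maximal (Theorem \ref{Thm:Sperner}), hence equivalent to the standard maximal $2$-covering array of size $7$ (Corollary \ref{Cor:max2ca1}). Because an equivalence changes column weights only by complementations and the columns of $R$ have weights $a_{1j}\in\{3,4\}$ while the standard array has all column weights $3$, the equivalence for $R$ complements precisely the columns $c^j$ with $a_{1j}=4$; absorbing the leftover row permutation, this identifies for each $j\ne 1$ a $2$-subset $P_j\subseteq\{2,\dots,7\}$ with $\mbox{supp}(c^j)\cap\{1,\dots,7\}$ equal to $\{1\}\cup P_j$ if $a_{1j}=3$ and to $\{2,\dots,7\}\setminus P_j$ if $a_{1j}=4$, and $j\mapsto P_j$ a bijection onto the set of $2$-subsets of $\{2,\dots,7\}$. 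Running the same argument on $R'$ — where now the weight-$4$ columns are exactly those with $a_{1j}=3$ — gives for each $j\ne 1$ a $2$-subset $P_j'\subseteq\{9,\dots,14\}$ with $\mbox{supp}(c^j)\cap\{8,\dots,14\}$ equal to $\{9,\dots,14\}\setminus P_j'$ if $a_{1j}=3$ and to $\{8\}\cup P_j'$ if $a_{1j}=4$, and $j\mapsto P_j'$ a bijection onto the set of $2$-subsets of $\{9,\dots,14\}$.

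Then I would compute $a_{jj'}$ for distinct $j,j'\ne 1$ directly from the support formulas above, in the three cases for the pair $(a_{1j},a_{1j'})\in\{3,4\}^2$. Each is a short inclusion--exclusion; using that the two maps $j\mapsto P_j$ and $j\mapsto P_j'$ are injective (so $|P_j\cap P_{j'}|,|P_j'\cap P_{j'}'|\in\{0,1\}$), one gets $a_{jj'}=3+[P_j\cap P_{j'}\ne\emptyset]+[P_j'\cap P_{j'}'\ne\emptyset]$ when $a_{1j}=a_{1j'}$ and $a_{jj'}=4-[P_j\cap P_{j'}\ne\emptyset]-[P_j'\cap P_{j'}'\ne\emptyset]$ when $a_{1j}\ne a_{1j'}$, where $[\,\cdot\,]$ is $1$ if true and $0$ if false. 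In all three cases the constraint $a_{jj'}\in\{3,4\}$ is equivalent to the single condition: $P_j\cap P_{j'}$ and $P_j'\cap P_{j'}'$ are not both nonempty. Composing the two bijections, $\Phi:=(j\mapsto P_j')\circ(j\mapsto P_j)^{-1}$ is then a bijection of the $15$-element set of $2$-subsets of a $6$-set to itself that sends any two intersecting $2$-subsets to two disjoint $2$-subsets --- that is, $\Phi$ maps every edge of the line graph $L(K_6)$ to a non-edge of $L(K_6)$.

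Finally, this is impossible by an edge count, the clean last step. The graph $L(K_6)$ has $15$ vertices and is $8$-regular, hence has $60$ edges, whereas its complement has only $\binom{15}{2}-60=45$ edges; since $\Phi$ is a bijection it would carry the $60$ edges of $L(K_6)$ to $60$ distinct pairs, all of which are non-edges of $L(K_6)$, contradicting $45<60$. Hence no $CA(14;3,16,2)$ exists and $CAN(3,16,2)\ge 15$. The one genuinely delicate point is the middle step: keeping straight which columns of $R$ and of $R'$ get complemented when reducing to standard form (the roles of weight-$3$ and weight-$4$ columns swap between $R$ and $R'$); once that is done, the three computations of $a_{jj'}$ collapse to the same condition and the $60>45$ count finishes everything.
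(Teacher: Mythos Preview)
Your argument is correct and reaches the contradiction by a genuinely different route from the paper's, though the two proofs share the same opening: both invoke Lemma~\ref{Lem:dist} with $m=7$ to get $wt(c^i)=7$ and $d(c^i,c^j)\in\{6,8\}$, and both recognize the two $7\times 15$ residuals on $c^1$ as maximal $2$-covering arrays via Corollary~\ref{Cor:max2ca1}.

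From there the paper normalizes more aggressively: after suitable row permutations and column complementations it arranges that $Res(C;c^1=1)$ and $\overline{Res(C;c^1=0)}$ are literally the standard array of size~$7$ (so in particular every $a_{1j}$ equals $3$). It then computes the global distance sum $\sum_{2\le i<j\le 16}d(c^i,c^j)$ row by row: the two all-ones rows contribute $0$, and each of the remaining twelve rows carries exactly five $1$'s and ten $0$'s among columns $2,\dots,16$, giving $12\cdot 5\cdot 10=600$. Since every pairwise distance is at least $6$, the sum must be at least $6\binom{15}{2}=630$, and $600<630$ is the contradiction.

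You instead keep both residuals in their general equivalent form, allowing columns of weight $3$ or $4$, encode each column $c^j$ by the pair $(P_j,P_j')$ of $2$-subsets, and reduce the constraint $a_{jj'}\in\{3,4\}$ uniformly across all three weight cases to the single rule ``intersecting $P$'s force disjoint $P'$'s''. The contradiction is then the graph-theoretic impossibility of a bijection on $E(K_6)$ sending the $60$ edges of $L(K_6)$ injectively into the $45$ edges of its complement. This is a little longer to set up, but it is more robust (no need for the final normalization making all $a_{1j}=3$) and it exposes the obstruction as a clean structural fact about $L(K_6)$. The paper's distance-sum method, once the normalization is in place, is shorter and is the template that scales to general odd $m$ in Theorem~\ref{Thm:Roux_all}.
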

\begin{proof}
It is enough to show that there is no $14 \times 16$ binary $3$-covering array. Let $C$ be a $14 \times 16$ binary $3$-covering array.
It follows from \ref{Lem:dist} that $wt(c^i) = 7$ for each column $c^i$ of $C$. Without loss of generality, we may assume that the first row of $C$ is all $1$'s vector.
Then $Res(C;c^1=1)$ and $Res(C;c^1=0)$ are $7 \times 15$ binary $2$-covering arrays. Hence $Res(C;c^1=1)$ is the standard maximal binary $2$-covering
array of size $7$ by Theorem ${\ref{Thm:equi}}$. So the weight of any column of $Res(C;c^1=1)$ is $3$. Since the weight of any column of $C$ is $7$, the weight of any column of $Res(C;c^1=0)$ is $4$. Hence the weight of any column of $\overline{Res(C;c^1=0)}$ is $3$.
By Theorem ${\ref{Thm:equi}}$, $\overline{Res(C;c^1=0)}$ is also the standard maximal binary $2$-covering array of size $7$. Note that the first rows of $Res(C;c^1=1)$ and $\overline{Res(C;c^1=0)}$ are all $1$'s vectors. For each row of $Res(C;c^1=1)$ and $\overline{Res(C;c^1=0)}$ except the first rows of each array, there are five $1$'s and ten $0$'s. Hence $\sum_{2 \leq i < j \leq 16}{d(c^i,c^j)} = 2\cdot6\cdot5\cdot10=600$. However, since $d(c^i,c^j)=6$ or $8$ for any $i \neq j$ by Lemma
${\ref{Lem:dist}}$, we have
$$
630=6\cdot{{15}\choose{2}}\leq \sum_{2\leq i < j \leq
16}{d(c^i,c^j)} \leq 8\cdot {{15}\choose{2}}=840.
$$
It's a contradiction.
\end{proof}

By the same argument as one in Lemma \ref{Lem:15by16}, we have

\begin{lemma}{\label{Lem:15by15}}
The covering array number of strength $3$ and degree $15$ over $B_2$ is greater than or equal to $15$, i.e. $CAN(3,15,2) \geq 15$.
\end{lemma}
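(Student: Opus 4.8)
The plan is to imitate the proof of Lemma \ref{Lem:15by16}, but with the two residual arrays of degree $14$ rather than $15$. Suppose, for contradiction, that $C$ is a $14\times 15$ binary $3$-covering array. First I would apply Lemma \ref{Lem:dist} with $m=7$: here $C$ is a $2m\times(n+1)=14\times 15$ array with $n=14$, and indeed $\binom{6}{2}+7-3\cdot 3=13<14\le\binom{6}{2}=15$ while $m=7\ge 5$, so every column of $C$ has weight $7$ and $d(c^i,c^j)\in\{6,8\}$ for all distinct $i,j$. In particular $wt(c^1)=7$, so by Proposition \ref{Prop:residual} both $Res(C;c^1=1)$ and $Res(C;c^1=0)$ are $7\times 14$ binary $2$-covering arrays, and the weight of column $j$ in $Res(C;c^1=1)$ is $|c^1\cap c^j|=7-d(c^1,c^j)/2\in\{3,4\}$ for each $j\ne 1$.

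Next I would normalize $C$ by equivalences. Complementing, throughout $C$, every column $j$ whose weight in $Res(C;c^1=1)$ equals $4$ keeps $C$ a $3$-covering array with all column weights $7$ and all pairwise distances in $\{6,8\}$ (it only replaces some distances $d$ by $14-d$, and $\{6,8\}$ is preserved by $d\mapsto 14-d$), while now every column of $Res(C;c^1=1)$ has weight $3$. Since $Res(C;c^1=1)$ is then a binary $2$-covering array of size $7$ and degree $14=\binom{6}{2}-1$, Corollary \ref{Cor:max2ca2} forces it to be equivalent to the array $D'$ obtained by deleting one column from the standard maximal $2$-covering array of size $7$; as all columns of both arrays have weight $3$ (a complementation would produce a column of weight $4$), the equivalence uses only row and column permutations, and $D'$ has row-weight multiset $\{14,5,5,5,5,4,4\}$ with a unique all-$1$'s row. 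Hence the corresponding row of $C$ — a $c^1=1$ row whose every entry equals $1$ — is an all-$1$'s row of $C$, which I permute to position $1$. Because each column of $C$ has weight $3$ on its seven $c^1=1$ rows, it has weight $7-3=4$ on its seven $c^1=0$ rows, so every column of $Res(C;c^1=0)$ has weight $4$; the same uniqueness argument (now forcing all $14$ columns to be complemented when matching with $D'$) identifies $Res(C;c^1=0)$ with $\overline{D'}$ up to permutations, with row-weight multiset $\{0,10,10,9,9,9,9\}$.

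Finally I would double-count $\sum_{2\le i<j\le 15}d(c^i,c^j)$. Row by row this equals $\sum_r b_r(14-b_r)$, where $b_r$ is the number of $1$'s in row $r$ among columns $2,\ldots,15$: row $1$ contributes $0$; the remaining six $c^1=1$ rows have $\{b_r\}=\{5,5,5,5,4,4\}$, contributing $4\cdot 45+2\cdot 40=260$; the seven $c^1=0$ rows have $\{b_r\}=\{0,10,10,9,9,9,9\}$, contributing $2\cdot 40+4\cdot 45=260$; so the sum is $520$. On the other hand $d(c^i,c^j)\ge 6$ gives $\sum_{2\le i<j\le 15}d(c^i,c^j)\ge 6\binom{14}{2}=546>520$, a contradiction, and $CAN(3,15,2)\ge 15$ follows.

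The step I would watch most carefully is the identification of the two residuals with $D'$ and $\overline{D'}$. In Lemma \ref{Lem:15by16} the residuals attain the maximal degree $\binom{6}{2}$ and are rigid by the uniqueness of maximal $2$-covering arrays; here they fall one short of maximal, so one must instead invoke the uniqueness of $2$-covering arrays of size $7$ and degree $\binom{6}{2}-1$ (Corollary \ref{Cor:max2ca2}) and verify that the column weights — all $3$ in one residual, all $4$ in the other after the normalization — rule out any column complementation in the matching with the standard form, which is exactly what pins down the two row-weight multisets used in the count. Everything after that is the same bookkeeping as in Lemma \ref{Lem:15by16}.
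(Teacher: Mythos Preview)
Your argument is correct and is precisely the adaptation of Lemma \ref{Lem:15by16} that the paper intends (the paper itself gives no details beyond ``by the same argument''). You correctly identified that the residuals are now $7\times 14$ rather than $7\times 15$, so Corollary \ref{Cor:max2ca2} (or Theorem \ref{Thm:equi}) replaces the maximality used in Lemma \ref{Lem:15by16}; your observation that the weight-$3$ condition forbids any complementation in the equivalence, pinning down the row-weight multisets $\{14,5,5,5,5,4,4\}$ and $\{0,9,9,9,9,10,10\}$, is exactly what is needed, and the count $520<546$ is correct. One cosmetic point: the second residual is $\overline{D''}$ for a possibly different deleted column than in $D'$, but since any one-column deletion of the standard array has the same row-weight multiset this does not affect the computation.
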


We now state the main results of this section, which improve the lower bound of Roux.

\begin{theorem}{\label{Thm:Roux_all}}
If $m \geq 7$ is odd and
${{{m-1}\choose{\lfloor \frac{m}{2} \rfloor-1}}+m-3\lfloor \frac{m}{2} \rfloor} < n \leq {{m-1}\choose{\lfloor \frac{m}{2} \rfloor-1}}$, then\\
$CAN(3,n+1,2) \geq 2CAN(2,n,2)+1$.
\end{theorem}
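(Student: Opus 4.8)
The plan is first to turn the statement into a non‑existence claim. Since $m$ is odd, $\lfloor m/2\rfloor=(m-1)/2$, and by Theorem~\ref{Thm:Sperner} one has $\overline{CAN}(2,m-1,2)=\binom{m-2}{(m-3)/2}$ and $\overline{CAN}(2,m,2)=\binom{m-1}{(m-3)/2}$. Using the inequality $\binom{m-2}{(m-3)/2}\le\binom{m-1}{(m-3)/2}+m-3\lfloor m/2\rfloor$ established inside the proof of Lemma~\ref{Lem:dist}, the hypothesis on $n$ gives $\overline{CAN}(2,m-1,2)<n\le\overline{CAN}(2,m,2)$, and since $\overline{CAN}(2,\cdot,2)$ is nondecreasing this forces $CAN(2,n,2)=m$. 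So I must show $CAN(3,n+1,2)\ge 2m+1$. The first Roux bound (Theorem~\ref{Thm:Roux}) already gives $CAN(3,n+1,2)\ge 2\,CAN(2,n,2)=2m$, so it is enough to prove that no $CA(2m;3,n+1,2)$ exists. Assume $C$ is one. Since $m\ge 7>5$ and $n$ lies in the prescribed window, Lemma~\ref{Lem:dist} applies and tells me $wt(c^i)=m$ for every column of $C$, and $d(c^i,c^j)\in\{2\lfloor m/2\rfloor,2\lceil m/2\rceil\}=\{m-1,m+1\}$ for all $i\ne j$.

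\textbf{The main step (a Gram‑matrix bound).} Next I would translate these two facts into linear algebra. Put $v_i=c^i-\tfrac12\mathbf 1\in\{\pm\tfrac12\}^{2m}$. From $wt(c^i)=m$ the coordinate sum of each $v_i$ is $0$, so the $n+1$ vectors $v_i$ all lie in the hyperplane $\{x:\mathbf 1\cdot x=0\}$, of dimension $2m-1$. A short computation gives $\langle v_i,v_i\rangle=m/2$ and $\langle v_i,v_j\rangle=\tfrac12(m-d(c^i,c^j))\in\{\tfrac12,-\tfrac12\}$ for $i\ne j$, so the Gram matrix $G=(\langle v_i,v_j\rangle)_{i,j}$ is a positive semidefinite $(n+1)\times(n+1)$ matrix of rank at most $2m-1$ with $\operatorname{tr}G=\tfrac m2(n+1)$ and $\operatorname{tr}(G^2)=\sum_{i,j}G_{ij}^2=\tfrac14(n+1)(m^2+n)$. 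Writing $\gamma_1,\dots,\gamma_s$ with $s\le 2m-1$ for the positive eigenvalues of $G$ and applying Cauchy--Schwarz in the form $(\sum\gamma_i)^2\le s\sum\gamma_i^2$, I get
\[
\Bigl(\tfrac m2(n+1)\Bigr)^2\le (2m-1)\cdot\tfrac14(n+1)(m^2+n),
\]
which after cancelling $\tfrac14(n+1)$ and rearranging simplifies to $n(m-1)\le 2m^2$, i.e. $n\le\dfrac{2m^2}{m-1}$.

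\textbf{Deriving the contradiction.} Then I would check that the degree window overshoots this bound whenever $m\ge 9$. Writing $m=2\ell+1$ with $\ell\ge 4$, the hypothesis gives $n>\binom{2\ell}{\ell-1}-(\ell-1)$, while $\binom{2\ell}{\ell-1}=\tfrac{2\ell}{\ell+1}\binom{2\ell-1}{\ell-1}>\tfrac{2\ell}{\ell+1}\cdot 5\ell$ by Lemma~\ref{Lem:ineq1}(a); a one‑line estimate then gives $\tfrac{10\ell^2}{\ell+1}-(\ell-1)>\tfrac{2m^2}{m-1}$ for $\ell\ge 4$. This contradicts the Gram bound, so no $CA(2m;3,n+1,2)$ can exist, and hence $CAN(3,n+1,2)\ge 2m+1=2\,CAN(2,n,2)+1$ for odd $m\ge 9$. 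The lone remaining case is $m=7$: there $n\in\{14,15\}$ and $\tfrac{2m^2}{m-1}=\tfrac{49}{3}$ is larger than either value of $n$, so the Gram bound says nothing — but that case is exactly Lemmas~\ref{Lem:15by15} and \ref{Lem:15by16}, already proved.

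\textbf{Where the difficulty lies, and an alternative.} The hard part is not the spectral inequality but the arithmetic that closes the argument: confirming that the $n$‑window genuinely clears $\tfrac{2m^2}{m-1}$ for \emph{every} odd $m\ge 9$ (this is what Lemma~\ref{Lem:ineq1} is built for), together with the explicit treatment of the threshold $m=7$. An alternative, closer to the method of Lemmas~\ref{Lem:15by16}--\ref{Lem:15by15}, would be: normalize the first row of $C$ to $\mathbf 1$ (legitimate because every column has weight $m$ in a $2m$‑row array and column inversions preserve both this property and the distance set $\{m-1,m+1\}$), use Theorem~\ref{Thm:equi} to describe $Res(C;c^1=1)$ and $\overline{Res(C;c^1=0)}$, up to equivalence, as the standard maximal $2$‑covering array of size $m$ with at most $\binom{m-1}{(m-3)/2}-n$ columns removed, and then double‑count $\sum_{i<j}d(c^i,c^j)=\sum_{\text{rows}}w_r(n+1-w_r)$; the resulting inequality reduces to $\tfrac n2-\tfrac{\sqrt n}{2}>\binom{2\ell-1}{\ell-2}$, which is Lemma~\ref{Lem:ineq4}. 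On that route the real obstacle would be controlling the row weights of the two residuals, since equivalence of $2$‑covering arrays does not preserve row weights: one has to argue directly that in this degree window only a bounded number of columns can have been deleted and track precisely how their removal lowers each row's weight.
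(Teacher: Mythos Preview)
Your argument is correct and takes a genuinely different route from the paper's. Both proofs start with Lemma~\ref{Lem:dist} to obtain $wt(c^i)=m$ and $d(c^i,c^j)\in\{m-1,m+1\}$, and both treat $m=7$ via Lemmas~\ref{Lem:15by15}--\ref{Lem:15by16}. From there the paper follows exactly the ``alternative'' you sketch: it normalizes $C$ so that $Res(C;c^1=1)$ and $\overline{Res(C;c^1=0)}$ are each obtained by deleting columns from the standard maximal $2$-covering array of size $m$ (Theorem~\ref{Thm:equi}), reads off the row-weight bound $w_r\le\binom{m-2}{\lfloor m/2\rfloor-2}$ for the non-trivial rows, and double-counts $\sum_{2\le i<j}d(c^i,c^j)=\sum_r w_r(n-w_r)$; Lemma~\ref{Lem:ineq4} is precisely what pushes this below the trivial lower bound $(m-1)\binom{n}{2}$. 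Your Gram-matrix argument bypasses Theorem~\ref{Thm:equi} and Lemma~\ref{Lem:ineq4} entirely: using only the weight and distance data plus the rank drop from $v_i\in\mathbf 1^\perp$, you get the clean inequality $n\le 2m^2/(m-1)$, and Lemma~\ref{Lem:ineq1}(a) alone shows the hypothesis violates it for $m\ge 9$. Your method is more robust and the arithmetic is lighter; the paper's approach, by contrast, makes explicit how the structure of the maximal $2$-covering array constrains the row-weight distribution. Your closing worry about the double-counting route is slightly misdiagnosed: the point is not to track deletions, but that since every column of $C$ has weight exactly $m$, column complementations of $C$ that normalize the column weights of $Res(C;c^1=1)$ to $\lfloor m/2\rfloor$ automatically normalize those of $\overline{Res(C;c^1=0)}$ as well, so (after independent row permutations within the two halves) both residuals become subarrays of the standard maximal array simultaneously.
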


\begin{proof}
When $m=7$, it is done by Lemma
${\ref{Lem:15by16}}$ and Lemma ${\ref{Lem:15by15}}$. We
assume that $m \geq 9$ is odd. We note that there is an $m \times n$ binary $2$-covering array by the conditions of $m$ and $n$. Suppose that $C$ is an $2m \times (n + 1)$ binary $3$-covering array. By Proposition \ref{Prop:weight}, $wt(c^i) = m$ for $1 \leq i \leq n$. Hence $Res(C;c^1=1)$ and $Res(C;c^1=0)$ both are $m \times n$ $2$-covering arrays. By
Theorem ${\ref{Thm:equi}}$, we can also assume that $Res(C;c^1=1)$ and
$\overline{Res(C;c^1=0)}$ are made from deleting columns of the standard
binary $2$-covering array of size $m$. For each row, except the first row, of the standard binary $2$-covering array of size $m$, there are
${m-2}\choose{\lfloor \frac{m}{2} \rfloor-2}$ $1$'s and ${m-2}\choose{\lfloor \frac{m}{2} \rfloor-1}$ $0$'s.
Since $Res(C;c^1=1)$ and $\overline{Res(C;c^1=0)}$ are obtained from the standard binary $2$-covering array by deleting some columns,  there are at most
${m-2}\choose{\lfloor \frac{m}{2} \rfloor-2}$ $1$'s in each row of $Res(C;c^1=1)$ and $\overline{Res(C;c^1=0)}$ except the first rows of each array.
Hence
$$
\sum_{2\leq i < j \leq n+1}{d(c^i,c^j)} \leq
{2(m-1){{m-2}\choose{\lfloor \frac{m}{2} \rfloor-2}}\left (n-{{m-2}\choose{\lfloor \frac{m}{2} \rfloor-2}}\right )}
$$
By Lemma ${\ref{Lem:dist}}$,
$$
(m-1){{n}\choose{2}} \leq \sum_{2\leq i < j \leq n+1}{d(c^i,c^j)}
\leq (m+1){{n}\choose{2}}.
$$
By Lemma ${\ref{Lem:ineq4}}$,
\begin{eqnarray}
\sum_{2\leq i < j \leq n+1}{d(c^i,c^j)} &\leq&
{2(m-1){{m-2}\choose{\lfloor \frac{m}{2} \rfloor-2}}\left (n-{{m-2}\choose{\lfloor \frac{m}{2} \rfloor-2}}\right )}\nonumber\\
&=&2(m-1)\left\{\left(\frac{n}{2}\right)^2 - \left(\frac{n}{2}-{{m-2}\choose{\lfloor \frac{m}{2} \rfloor-2}}\right)^2\right\}\nonumber\\
&<&2(m-1)\left ( \left(\frac{n}{2}\right)^2-\left(\frac{\sqrt{n}}{2}\right)^2\right )
=(m-1){{n}\choose{2}}\nonumber\\
&\leq& \sum_{2\leq i < j \leq n+1}{d(c^i,c^j)}.\nonumber
\end{eqnarray}
It's a contradiction. Thus, $CAN(3,n+1,2) \geq 2CAN(2,n,2)+1$.
\end{proof}

\begin{theorem}{\label{Thm:Roux_even}}
If $m\geq 8$ is even and ${{m-1}\choose{\frac{m}{2}-1}}-\frac{m}{2} < n \leq
{{m-1}\choose{\frac{m}{2}-1}}$, then\\
$CAN(3,n+1,2)\geq 2 CAN(2,n,2)+2$.
\end{theorem}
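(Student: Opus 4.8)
First observe that $CAN(2,n,2)=m$ on the stated range: an $m\times n$ $2$-covering array is obtained by deleting columns from the standard maximal one (since $n\le\binom{m-1}{m/2-1}$), while $n>\binom{m-1}{m/2-1}-m/2>\binom{m-2}{m/2-1}=\overline{CAN}(2,m-1,2)$ rules out $m-1$ rows. Thus $2CAN(2,n,2)+2=2m+2$, and by the first inequality of Theorem~\ref{Thm:Roux} it suffices to show that no $2m\times(n+1)$ and no $(2m+1)\times(n+1)$ binary $3$-covering array exists. The overall strategy is to imitate the proof of Theorem~\ref{Thm:Roux_all}, excluding each of these two sizes in turn.

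Suppose $C$ is a $2m\times(n+1)$ $3$-covering array. Since $m$ is even the hypotheses of Lemma~\ref{Lem:dist} hold, so every column of $C$ has weight $m$ and $d(c^i,c^j)=2\lfloor m/2\rfloor=m$ for all $i\ne j$; hence $\sum_{2\le i<j\le n+1}d(c^i,c^j)=m\binom n2$. Normalizing $c^1=(1,\dots,1,0,\dots,0)^{T}$ with $m$ ones, each $c^i$ ($i\ge2$) carries $m/2$ ones in each half, so $R_1:=Res(C;c^1=1)$ and $R_0:=Res(C;c^1=0)$ are $m\times n$ $2$-covering arrays all of whose columns have weight $m/2$, and by Theorem~\ref{Thm:equi} both $R_1$ and $\overline{R_0}$ are equivalent to deletions of the standard maximal $2$-covering array of size $m$. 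One then wants to arrange, using only equivalences of $C$, that $R_1$ and $\overline{R_0}$ are literally such deletions; then, computing $\sum_{2\le i<j\le n+1}d(c^i,c^j)$ row by row, the two all-ones rows (of $R_1$ and of $\overline{R_0}$) contribute $0$, while each of the remaining $m-1$ rows of $R_1$ has at most $M:=\binom{m-2}{m/2-2}$ ones and each of the remaining $m-1$ rows of $R_0$ has at most $M$ zeros, so each contributes at most $M(n-M)\le(n/2)^2$. This yields $m\binom n2\le 2(m-1)M(n-M)\le(m-1)n^2/2$, contradicting $m\binom n2=mn(n-1)/2>(m-1)n^2/2$; the last inequality holds because $n>m$ (indeed $n>\binom{m-1}{m/2-1}-m/2>m$ for $m\ge8$).

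For the size-$(2m+1)$ case, by Proposition~\ref{Prop:weight} every column has weight $m$ or $m+1$, so after replacing the weight-$(m+1)$ columns by their complements we may assume all weights equal $m$. For any column $c^i$ the residual $Res(C;c^i=1)$ is an $m\times n$ $2$-covering array in the range of Theorem~\ref{Thm:equi}; since each of its columns must have weight $m/2$ (by the ``moreover'' clause, whether or not it first has to be inverted to have weight $\le m/2$), we obtain $|\supp(c^i)\cap\supp(c^j)|=m/2$ for all $i\ne j$, hence $d(c^i,c^j)=m$ and $\sum_{1\le i<j\le n+1}d(c^i,c^j)=m\binom{n+1}2$. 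Running the analogous row-by-row double count over the $2m+1$ rows, again using Theorem~\ref{Thm:equi} to control the number of $1$s in each row of the relevant residual $2$-covering arrays, forces $m\binom{n+1}2$ strictly below itself, a contradiction.

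The delicate step, in both cases, is ``$R_1$ and $\overline{R_0}$ (and their analogues) may be taken to be deletions of the standard array''. For odd $m$ this comes for free: because $m>2\lfloor m/2\rfloor$, Corollary~\ref{Cor:hil} forces an intersecting family of $\lfloor m/2\rfloor$-subsets of $[m]$ of the relevant size to be a star, and the column inversions that turn $R_1$ into a sub-star turn $\overline{R_0}$ into one as well. For even $m$ the Erd\"{o}s--Ko--Rado extremal families of $(m/2)$-subsets of $[m]$ are \emph{not} unique, so a residual $2$-covering array need not be a deletion of the standard, and inverting columns to make $R_1$ a star need not make $\overline{R_0}$ a star. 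Overcoming this is the technical heart of the proof: one must use the strength-$3$ hypothesis in full --- equivalently, that for every three columns the families $\{\supp(c^i)\}$ and $\{\supp(\overline{c^i})\}$ of $m$-subsets of $[2m]$ are $3$-wise intersecting and no member of either contains the intersection of two of the others --- to exclude the non-star residuals and to make the count go through uniformly, in particular for the base case $m=8$.
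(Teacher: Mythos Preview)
Your proposal is incomplete, and you yourself flag the gap. The ``delicate step'' --- arranging that both $R_1$ and $\overline{R_0}$ (and their odd-size analogues) are literal deletions of the standard maximal $2$-covering array --- is never carried out, and for even $m$ it is genuinely problematic: Theorem~\ref{Thm:equi} only gives equivalence \emph{allowing column inversions}, and since inverting a weight-$m/2$ column of an $m$-row array again yields a weight-$m/2$ column, you cannot conclude that the columns of $R_1$ form a star after row permutations alone. Your final paragraph diagnoses this correctly but then punts (``one must use the strength-$3$ hypothesis in full \ldots\ to exclude the non-star residuals''), which is not a proof. Without this, neither the $2m$ nor the $2m+1$ double count goes through.

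The paper sidesteps the difficulty entirely with a different and much shorter argument. First, it suffices to rule out size $2m+1$: any $2m\times(n+1)$ covering array extends to a $(2m+1)\times(n+1)$ one by appending an arbitrary row, so your separate treatment of the $2m$ case is unnecessary. For a putative $(2m+1)\times(n+1)$ array $C$, one observes (as you do) that after normalization all column weights equal $m$ and all pairwise column distances equal $m$. Replacing $0$'s by $-1$'s yields a $(2m+1)\times(n+1)$ matrix $B$ with $B^{T}B=2mI+J$, which is an invertible $(n+1)\times(n+1)$ matrix; hence
\[
n+1=\operatorname{rank}(B^{T}B)\le\operatorname{rank}(B)\le 2m+1.
\]
But Lemma~\ref{Lem:ineq1}(a) (with $m=2l$) gives $n>\binom{m-1}{m/2-1}-m/2>2m$, a contradiction. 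No double counting, no control of row profiles in residuals, and no analysis of non-star intersecting families is needed.
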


\begin{proof}
It is enough to show that there is no $(2m+1)\times(n+1)$ binary $3$-covering
array. Let $C$ be a $(2m+1)\times(n+1)$ binary $3$-covering array. From Proposition ${\ref{Prop:weight}}$, $wt(c^i)=m$ or $m+1$ for
$1 \leq i \leq n + 1$. By taking complement of the columns with weight $m+1$, we may assume that
$wt(c^i)=m$ for any $i$. By Lemma ${\ref{Lem:dist}}$,
$d(c^i,c^j)=m$ for any pair $i,j$. Let $B$ be the
$(2m+1)\times(n+1)$ matrix obtained from replacing $0$'s by $-1$'s and $A$ be the $(n+1) \times (n+1)$ matrix ${B^T}B$. Then, $A=2m I + J$, where $I$ and $J$ are
$(n+1)\times(n+1)$ identity and all $1$'s matrix, respectively. The rank of $A$ is $(n+1)$. By Lemma ${\ref{Lem:ineq1}}$,
\begin{eqnarray*}
n + 1 = rank(A) \leq rank(B) \leq 2m + 1 < n + 1.
\end{eqnarray*}
It is a contradiction. Therefore, $CAN(3,n+1,2)\geq 2 CAN(2,n,2)+2$.
\end{proof}

By Theorems ${\ref{Thm:Roux}}$, ${\ref{Thm:Roux_all}}$, and ${\ref{Thm:Roux_even}}$, we have

\begin{cor}
If $m\geq 7$, $t\geq 3$ and
${{m-1}\choose{\lfloor \frac{m}{2} \rfloor-1}}+m-3\lfloor \frac{m}{2} \rfloor < n \leq {{m-1}\choose{\lfloor \frac{m}{2} \rfloor-1}}$, then
$$
CAN(t,n+t-2,2)\geq\left\{
\begin{array}{ll}
2^{t-3}(2m+1), & \textrm{if $m$ is odd}\\
2^{t-2}(m+1), & \textrm{if $m$ is even}.
\end{array} \right.
$$
\end{cor}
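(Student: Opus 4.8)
The plan is to derive this corollary directly from Theorems \ref{Thm:Roux}, \ref{Thm:Roux_all}, and \ref{Thm:Roux_even}. First I would pin down the exact value of $CAN(2,n,2)$ under the hypothesis, then establish the base case $t=3$, and finally bootstrap to all $t\ge 3$ by the first Roux inequality.

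\emph{Step 1: $CAN(2,n,2)=m$ under the hypothesis.} The upper bound $n\le{{m-1}\choose{\lfloor\frac m2\rfloor-1}}=\overline{CAN}(2,m,2)$ (Theorem \ref{Thm:Sperner}) gives $CAN(2,n,2)\le m$. For the reverse inequality, recall that inside the proof of Lemma \ref{Lem:dist} it was checked that ${{m-2}\choose{\lfloor\frac{m-1}2\rfloor-1}}\le{{m-1}\choose{\lfloor\frac m2\rfloor-1}}+m-3\lfloor\frac m2\rfloor$ for $m\ge5$; combining this with the lower hypothesis on $n$ gives $n>\overline{CAN}(2,m-1,2)$, so no $(m-1)\times n$ binary $2$-covering array exists and $CAN(2,n,2)\ge m$. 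Hence $CAN(2,n,2)=m$.

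\emph{Step 2: the base case $t=3$.} If $m\ge 7$ is odd, the hypothesis on $n$ is exactly that of Theorem \ref{Thm:Roux_all}, so $CAN(3,n+1,2)\ge 2\,CAN(2,n,2)+1=2m+1=2^{0}(2m+1)$. If $m\ge 7$ is even, then $m\ge 8$ and the hypothesis becomes ${{m-1}\choose{\frac m2-1}}-\frac m2<n\le{{m-1}\choose{\frac m2-1}}$, which is exactly the hypothesis of Theorem \ref{Thm:Roux_even}; hence $CAN(3,n+1,2)\ge 2\,CAN(2,n,2)+2=2m+2=2^{1}(m+1)$.

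\emph{Step 3: induction on $t$.} The first inequality of Theorem \ref{Thm:Roux} with $q=2$ reads $CAN(t+1,N+1,2)\ge 2\,CAN(t,N,2)$; taking $N=n+t-2$ and iterating from Step 2 yields $CAN(t,n+t-2,2)\ge 2^{t-3}(2m+1)$ for $m$ odd and $CAN(t,n+t-2,2)\ge 2^{t-2}(m+1)$ for $m$ even, for every $t\ge 3$. I do not expect a genuine obstacle here; the argument is an assembly of already proved facts. The only points that need care are (i) verifying that the hypothesis genuinely forces the \emph{equality} $CAN(2,n,2)=m$ rather than merely a one-sided bound, which relies on the auxiliary inequality from the proof of Lemma \ref{Lem:dist}, and (ii) keeping the additive constant ($+1$ versus $+2$) and the exponent of $2$ correctly synchronized with the base case $t=3$ as the induction on $t$ runs.
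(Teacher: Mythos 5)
Your proof is correct and follows exactly the route the paper intends: the paper's "proof" of this corollary is just the one-line citation of Theorems \ref{Thm:Roux}, \ref{Thm:Roux_all}, and \ref{Thm:Roux_even}, and your three steps (identifying $CAN(2,n,2)=m$ via the auxiliary inequality from Lemma \ref{Lem:dist}, invoking the two strength-$3$ theorems for the base case, and iterating the first Roux inequality) are precisely the details being elided.
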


\section{Uniqueness of some optimal binary covering arrays}

In this section, we will show that for given $n$ and small $t$ ($t = 3, 4$), some binary optimal $t$-covering arrays of degree $n$ are unique. For a large $t = n - 2$, Johnson and Entringer \cite{Joh} constructed an infinite family of optimal binary $t$-covering arrays, and proved that such optimal covering arrays are unique. We will briefly introduce the result of Johnson and Entringer.

Let $Q_n$ be the graph whose vertices are the binary $n$-tuples
$v=(v_1,\ldots,v_n)$, two of which are adjacent if and only if they
differ in exactly one coordinate. For
$u=(u_1,\ldots,u_n)$ and $v=(v_1,\ldots,v_n)$, define
$w := u + v$ by $w=(w_1,\ldots,w_n)$, where $w_i \equiv u_i + v_i (mod \emph{ }2)$
for $1 \leq i \leq n$. We set $\lvert v \rvert = \Sigma_{i=1}^{n}{v_i}$. For $\emptyset \neq S \subseteq V(Q_n), c \in V(Q_n)$, define
$S+c$ by $S+c=\{s+c | s\in S\}$. The subgraph of $Q_n$ induced
by $S$ is denoted by $<S>$. And let $C_4$ be a $4$-cycle. The following is proved by Johnson and Entringer \cite{Joh}.

\begin{theorem}{\label{Thm:4cycle}}
Let $V_{n}^{j}=\{v \in V(Q_n) \bigm | \lvert v \rvert \equiv j (mod \
3)\}$ and set $S_n = V_{n}^{r_n} \cup V_{n}^{r_n-1}$, where $r_n$
is chosen from $\{0,1,2\}$ so that $n \equiv 2r_n$ or $2r_n-1 (mod \
6)$. Then for $n\leq 1 $,
\begin{enumerate}[a)]

\item If $S\subseteq V(Q_n)$ and $\lvert S \rvert > \lceil 2^{n+1}/3
\rceil$, then $<S>$ contains a $C_4$.

\item For all $c \in V(Q_n)$, $\lvert S_n+c \rvert = \lceil
2^{n+1}/3 \rceil$, and $<S_{n}+c>$ contains no $C_4$.

\item If $S \subseteq V(Q_n)$, $\lvert S \rvert = \lceil 2^{n+1}/3
\rceil$, and $<S>$ contains no $C_4$, then $S=S_n+c$ for some $c\in V(Q_n)$.

\end{enumerate}
\end{theorem}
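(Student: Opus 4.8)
I would first recast the theorem so that it becomes a statement about \emph{squares}. In $Q_n$ every $4$-cycle spans a $2$-dimensional subcube: if $v_0v_1v_2v_3v_0$ is a $4$-cycle with $v_1=v_0+e_i$ and $v_2=v_1+e_j$ (where $e_i$ is the $i$th standard basis vector and $i\ne j$), then going around the cycle must bring $e_i+e_j$ back to $0$ by two further unit-vector steps, which forces the last two flips to repeat $e_j$ and $e_i$. Hence ``$\langle S\rangle$ contains a $C_4$'' means exactly ``$S$ contains all four corners of some $2$-subcube'' (this is also the reformulation that ties the theorem to $CAN(n-2,n,2)=\lfloor 2^n/3\rfloor$, via complementation of the row set of a covering array). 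Part (b) is then immediate: the four corners of a $2$-subcube have weights $k,k+1,k+1,k+2$, whose residues mod $3$ run over all of $\{0,1,2\}$, so exactly one corner lies in $V_n^{r_n+1}$, which is disjoint from $S_n=V_n^{r_n}\cup V_n^{r_n-1}$; thus $\langle S_n\rangle$ is $C_4$-free, and so is $\langle S_n+c\rangle$ for every $c$, since $v\mapsto v+c$ is an automorphism of $Q_n$ preserving cardinality. For the size, a routine roots-of-unity evaluation shows $\{|V_n^0|,|V_n^1|,|V_n^2|\}=\{\lfloor 2^n/3\rfloor,\lfloor 2^n/3\rfloor,\lceil 2^n/3\rceil\}$ for $n$ even and $\{\lfloor 2^n/3\rfloor,\lceil 2^n/3\rceil,\lceil 2^n/3\rceil\}$ for $n$ odd, and the prescribed choice of $r_n$ is precisely the one for which $V_n^{r_n+1}$ is a smallest class; hence $|S_n+c|=2^n-\lfloor 2^n/3\rfloor=\lceil 2^{n+1}/3\rceil$, finishing (b). (Checking that the displayed mod-$6$ description of $r_n$ really selects a smallest class is a short computation.)

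For (a) and (c) I would argue by simultaneous induction on $n$, writing $f(n)=\lceil 2^{n+1}/3\rceil$ for the conjectured maximum. The base cases $n\le 3$ are checked by hand: $Q_2$ is itself a $C_4$, and a $C_4$-free subset of $Q_3$ must omit a transversal of the six square faces, whose minimum size is $2$ (an antipodal pair), so $f(3)=6$ and the extremal sets are precisely the $Q_3$-minus-an-antipodal-pair sets, i.e.\ the translates of $S_3$. For the inductive step, split $Q_n$ along one coordinate into two copies $Q_{n-1}^{0},Q_{n-1}^{1}$ of $Q_{n-1}$ and write $S=S_0\sqcup S_1$ with $S_0,S_1\subseteq V(Q_{n-1})$. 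Two observations drive everything: (i) $\langle S_0\rangle$ and $\langle S_1\rangle$ are $C_4$-free in $Q_{n-1}$; (ii) $S_0\cap S_1$ is an independent set of $Q_{n-1}$, because a $2$-subcube that uses the split coordinate projects to an edge both of whose endpoints lie in $S_0$ and in $S_1$.

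From (i) and induction, $|S|=|S_0|+|S_1|\le 2f(n-1)$. When $n$ is odd, $2f(n-1)=f(n)$ and (a) follows at once. When $n$ is even, $2f(n-1)=f(n)+1$, so it remains to rule out $|S_0|=|S_1|=f(n-1)$; in that case the inductive form of (c) gives $S_0=S_{n-1}+c_0$ and $S_1=S_{n-1}+c_1$. I would then prove a lemma: the intersection of any two translates of $S_{n-1}$ contains an edge of $Q_{n-1}$. Since $S_{n-1}$ is the complement of a single residue class $V_{n-1}^{a}$ mod $3$, this amounts to exhibiting an edge $\{u,u+e_i\}$ of $Q_{n-1}$ whose four weights $|u|,|u+e_i|,|u+c|,|u+e_i+c|$ (with $c=c_0+c_1$) all avoid the residue $a$, which one arranges by prescribing the residues of $|u|$ and $|u+c|$ and then flipping a coordinate $i$ with $c_i=1$ — a short case analysis on those residues, with the handful of smallest instances done directly. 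This contradicts (ii), so $|S|\le f(n)$, which proves (a).

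For (c), suppose $|S|=f(n)$. When $n$ is odd, $|S_0|=|S_1|=f(n-1)$ is forced, so $S_0,S_1$ are translates of $S_{n-1}$ by (c), and (ii) forces these two translates to ``line up'' (one shows the intersection of two translates of $S_{n-1}$ is independent only when they differ by the all-ones vector), giving $S=S_n+c$ for some $c$. When $n$ is even, $f(n)=2f(n-1)-1$, so one half is extremal (a translate of $S_{n-1}$) and the other is exactly one vertex short of extremal; combining the rigidity of the extremal half with (ii) should again force $S=S_n+c$, but this needs a classification of the $C_4$-free subsets of $Q_{n-1}$ of size $f(n-1)-1$. I expect this ``one vertex short of extremal'' stability statement — carried along as a third clause of the induction and proved by an elementary but lengthy analysis of weight residues mod $3$ — to be the main obstacle, together with making the edge-exhibiting lemma above uniform in $n$ rather than only for small $n$; the rest (base cases, the mod-$6$ normalization of $r_n$, and the observation that translations of $Q_n$ correspond to the column-inversion equivalence of covering arrays) is routine bookkeeping.
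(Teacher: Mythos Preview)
The paper does not give its own proof of this theorem: it is quoted verbatim from Johnson and Entringer \cite{Joh} and used as a black box to obtain the corollary $CAN(n-2,n,2)=\lfloor 2^n/3\rfloor$. There is therefore no in-paper argument to compare your proposal against.

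On its own merits, your treatment of part (b) is complete and correct (the four corners of any $2$-subcube have weights $k,k{+}1,k{+}1,k{+}2$, so every residue class mod $3$ is hit and $S_n$ misses one corner; the size count is a routine roots-of-unity calculation). Your inductive skeleton for (a) and (c) --- split along one coordinate, use that each half is $C_4$-free in $Q_{n-1}$ and that $S_0\cap S_1$ is independent --- is exactly the natural approach and is also the one Johnson and Entringer take. But you have correctly identified, and not resolved, the two places where real work is needed: the edge-in-the-intersection lemma for two translates of $S_{n-1}$ (your ``short case analysis'' hides a nontrivial argument, since a translate of $S_{n-1}$ by a general $c$ is \emph{not} a union of weight-residue classes), and the ``one vertex short of extremal'' stability statement for (c) in the even-$n$ step. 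In the original paper these are handled by a careful, lengthy induction that simultaneously tracks near-extremal configurations; as it stands your proposal is a sound outline rather than a proof, with the substantive difficulties deferred to precisely those two lemmas.
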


A $t$-covering array of degree $n$ can be thought as a subgraph $G$ of
$n$-cube $Q_n$ such that every $(n-t)$-subcube contains a vertex of
$G$. Hence the following is an immediate consequence of Theorem $\ref{Thm:4cycle}$.

\begin{cor}
For $n \geq 4$, $CAN(n-2,n,2)=\lfloor 2^{n}/3 \rfloor$, and every
$\lfloor 2^{n}/3 \rfloor \times n$ covering array of strength
$(n-2)$ is equivalent to the matrix whose rows from the set
$V_{n}^{r_n+1}$ in Theorem $\ref{Thm:4cycle}$.
\end{cor}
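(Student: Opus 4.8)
The plan is to pass through the dictionary between covering arrays and vertex subsets of the cube that is recalled just before the statement. An $m\times n$ matrix $C$ over $B_2$ is an $(n-2)$-covering array if and only if the set $G\subseteq V(Q_n)$ of its rows meets every $2$-subcube of $Q_n$; here $|G|\le m$, with equality exactly when the rows of $C$ are pairwise distinct, and the covering condition depends only on $G$ (not on multiplicities). Since the only $4$-cycles of $Q_n$ are its $2$-dimensional faces --- going around a $4$-cycle one flips coordinates in the pattern $a,b,a,b$ for some $a\ne b$, so the cycle lies in a single $2$-face --- the requirement ``$G$ meets every $2$-subcube'' is equivalent to ``$\langle S\rangle$ contains no $C_4$'', where $S=V(Q_n)\setminus G$. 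I would record this reformulation first, since it turns the whole corollary into a bookkeeping application of Theorem \ref{Thm:4cycle}.

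For the value of $CAN(n-2,n,2)$: if $C$ is an $(n-2)$-covering array of size $m$, then $\langle S\rangle$ is $C_4$-free with $|S|=2^n-|G|\ge 2^n-m$, so part $(a)$ of Theorem \ref{Thm:4cycle} gives $|S|\le\lceil 2^{n+1}/3\rceil$ and hence $m\ge|G|\ge 2^n-\lceil 2^{n+1}/3\rceil=\lfloor 2^n/3\rfloor$ (the last identity because $3\nmid 2^n$). Conversely, part $(b)$ applied to $S_n$ shows that $G_0:=V(Q_n)\setminus S_n=V_n^{r_n+1}$ meets every $2$-subcube and has exactly $\lfloor 2^n/3\rfloor$ elements, so the matrix whose rows are the vectors in $V_n^{r_n+1}$ is an $(n-2)$-covering array of that size. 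Together these give $CAN(n-2,n,2)=\lfloor 2^n/3\rfloor$.

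For uniqueness, let $C$ be optimal, so $m=\lfloor 2^n/3\rfloor$. Then the chain $m\ge|G|\ge\lfloor 2^n/3\rfloor$ from the previous paragraph is an equality, whence $|G|=m$ (the rows are distinct), $|S|=\lceil 2^{n+1}/3\rceil$, and $\langle S\rangle$ is $C_4$-free. By part $(c)$ of Theorem \ref{Thm:4cycle}, $S=S_n+c$ for some $c\in V(Q_n)$, so $G=V(Q_n)\setminus(S_n+c)=(V(Q_n)\setminus S_n)+c=V_n^{r_n+1}+c$. Finally I would observe that translating every row by a fixed vector $c$ is precisely the inversion of the values in the columns indexed by $\supp(c)$, an equivalence operation of type $(c)$; hence $C$ is equivalent to the matrix whose row set is $V_n^{r_n+1}$.

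I do not expect a genuine obstacle here, since Theorem \ref{Thm:4cycle} does all the work; the only points needing care are (i) the identification of the $2$-subcubes of $Q_n$ with its $C_4$'s, (ii) the elementary floor/ceiling identity $2^n-\lceil 2^{n+1}/3\rceil=\lfloor 2^n/3\rfloor$, and (iii) the remark that an optimal array necessarily has distinct rows, so that its row multiset really is a vertex subset of $Q_n$ to which Theorem \ref{Thm:4cycle} applies.
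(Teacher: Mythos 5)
Your proposal is correct and follows exactly the route the paper intends: it makes precise the one-line dictionary (rows of an $(n-2)$-covering array $=$ a vertex set meeting every $2$-subcube, i.e.\ a set whose complement induces no $C_4$) and then reads off the bound from parts $(a)$--$(b)$ and the uniqueness from part $(c)$ of Theorem \ref{Thm:4cycle}, with translation by $c$ realized as column inversions. The paper states this corollary as an ``immediate consequence'' without writing out these details, so your write-up is simply the complete version of the same argument.
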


Now we will show that $10 \times 5, 12 \times 11$ binary $3$-covering, $24 \times 12$ binary
$4$-covering arrays are unique. From Theorem \ref{Thm:Sperner}, it is easy to show that $CAN(2,4,2) = 5$. After a simple computation, we can easily get
\begin{lemma}{\label{Lem:5by4}}
Every $5 \times 4$ binary $2$-covering array is equivalent to
$$
\small{\left(
\begin{array}{cccc} 0 & 0 & 0 & 0\\
0 & 1 & 1 & 1\\
1 & 0 & 1 & 1\\
1 & 1 & 0 & 1\\
1 & 1 & 1 & 0 \end{array} \right)}
$$
\end{lemma}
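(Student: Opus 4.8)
The plan is to recognize a $5\times 4$ binary $2$-covering array as a \emph{maximal} $2$-covering array of size $5$ and then quote the uniqueness already established in Section 3. By Theorem \ref{Thm:Sperner}, $\overline{CAN}(2,5,2)={4\choose\lfloor 5/2\rfloor-1}={4\choose 1}=4$, so a $5\times 4$ binary $2$-covering array attains the maximal possible degree for size $5$; that is, it is a maximal $2$-covering array of size $5$. Hence, by Corollary \ref{Cor:max2ca1}, it is equivalent to the standard maximal binary $2$-covering array of size $5$, namely the matrix
\[
M=\begin{pmatrix}
1&1&1&1\\
1&0&0&0\\
0&1&0&0\\
0&0&1&0\\
0&0&0&1
\end{pmatrix},
\]
whose first row is all $1$'s and whose remaining $4\times 4$ block lists the four weight-$1$ vectors of length $4$.

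It then remains to check that $M$ is equivalent to the matrix displayed in the statement. Applying operation $(c)$ — inverting the values of \emph{every} column of $M$ by the nontrivial permutation of $B_2$ — replaces each entry of $M$ by its complement, and the result is exactly the matrix
\[
\begin{pmatrix}
0&0&0&0\\
0&1&1&1\\
1&0&1&1\\
1&1&0&1\\
1&1&1&0
\end{pmatrix}.
\]
So the given array is equivalent to $M$, hence to the displayed matrix. This last step is a routine entrywise verification; the only substance of the proof is the observation that a $5\times 4$ array has maximal degree, which lets Corollary \ref{Cor:max2ca1} do the work, so there is no genuine obstacle here.

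If one prefers to avoid quoting Corollary \ref{Cor:max2ca1}, the lemma can also be proved from scratch: by Proposition \ref{Prop:weight} every column has weight $2$ or $3$, so after inverting some columns we may assume all four columns have weight $2$; the $2$-covering property is then equivalent (the other three pattern conditions are automatic for distinct weight-$2$ subsets of $[5]$ since $2+2<5$) to the four supports being pairwise intersecting, and because $5>2\cdot 2$ the Erd\"{o}s--Ko--Rado theorem (Theorem \ref{Thm:Erd}) together with its uniqueness part forces these four $2$-subsets to be a star through a single row. After permuting rows and columns this yields $M$, and one more column inversion gives the displayed matrix. Either route is short; the reduction via Corollary \ref{Cor:max2ca1} is the cleanest.
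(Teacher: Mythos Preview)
Your argument is correct. The paper itself gives essentially no proof of this lemma: it only remarks ``After a simple computation, we can easily get'' the result, treating it as a direct case-check. Your primary route is genuinely different and tidier: you observe that $\overline{CAN}(2,5,2)=\binom{4}{1}=4$, so any $5\times 4$ binary $2$-covering array is maximal, and then Corollary~\ref{Cor:max2ca1} (which applies since $m=5\ge 4$ and $n=4>\binom{4}{1}+5-3\cdot 2=3$) immediately gives equivalence to the standard array; complementing all columns finishes. This buys you a proof that uses the structural machinery of Section~3 rather than an ad hoc enumeration.

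One small remark on your alternative route: the version of Erd\H{o}s--Ko--Rado stated in the paper (Theorem~\ref{Thm:Erd}) gives only the bound, not the uniqueness of the extremal family, so strictly speaking you are importing a fact from outside the paper when you say ``together with its uniqueness part.'' For $m=5$, $r=2$ this uniqueness is of course trivial to verify by hand (four pairwise intersecting $2$-subsets of $[5]$ must share a common point), so there is no real gap; but your first route via Corollary~\ref{Cor:max2ca1} stays entirely within the paper's stated results and is therefore the cleaner choice.
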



Now we will show that $10 \times 5$ binary $3$-covering arrays are unique.

\begin{theorem}
Every $10 \times 5$ binary $3$-covering array is equivalent to
$$
\small{\left(
\begin{array}{ccccc} 1 & 0 & 0 & 0 & 0\\
1 & 0 & 1 & 1 & 1\\
1 & 1 & 0 & 1 & 1\\
1 & 1 & 1 & 0 & 1\\
1 & 1 & 1 & 1 & 0\\
0 & 1 & 0 & 0 & 0\\
0 & 0 & 1 & 0 & 0\\
0 & 0 & 0 & 1 & 0\\
0 & 0 & 0 & 0 & 1\\
0 & 1 & 1 & 1 & 1 \end{array}\right)}
$$
\end{theorem}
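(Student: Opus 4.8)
The plan is to use the residual matrix technique together with the uniqueness results for binary $2$-covering arrays already established in Section 3, exactly as in the pattern of Lemma~\ref{Lem:15by16}. Let $C$ be a $10 \times 5$ binary $3$-covering array. By Proposition~\ref{Prop:weight} applied with $t=3$, $q=2$, and the fact that $CAN(2,4,2)=5$, every column $c^i$ of $C$ satisfies $5 \leq wt(c^i) \leq 5$, so $wt(c^i)=5$ for all $i$. After permuting rows we may assume the first row of $C$ is the all-$1$'s vector (some column has a $1$ in some row; permute that column to be $c^1$ — actually we only need: some row is all $1$'s, which follows since for any fixed column the five rows with a $1$ in it form a residual $2$-covering array, and by reindexing rows we place these first). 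Then $Res(C; c^1 = 1)$ and $Res(C; c^1 = 0)$ are both $5 \times 4$ binary $2$-covering arrays, and by Lemma~\ref{Lem:5by4} each is equivalent to the displayed $5\times 4$ matrix, in which every column has weight $3$ or $4$; more precisely, reading that matrix, three columns have weight $3$ and the all-$0$ row forces a specific structure.

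Next I would pin down the two residual blocks simultaneously. Since $wt(c^i)=5$ in the full $10\times 5$ array, the weight of column $i$ in $Res(C;c^1=0)$ is $5 - (\text{weight in } Res(C;c^1=1))$. Using Lemma~\ref{Lem:5by4}, up to the equivalence operations allowed on $C$ (row permutations within the "top" block and within the "bottom" block, column permutations applied to both blocks at once, and column inversions applied to both blocks at once), we may normalise $Res(C;c^1=1)$ to be exactly the matrix in Lemma~\ref{Lem:5by4}. This forces the column weights in $Res(C;c^1=1)$ to be $(0,3,3,3,3)$ — wait, more carefully, the matrix in Lemma~\ref{Lem:5by4} has column weights $(3,3,3,2)$; so after fixing it, the corresponding column weights in $Res(C;c^1=0)$ are $(2,2,2,3)$, hence $\overline{Res(C;c^1=0)}$ has column weights $(3,3,3,2)$ and, being a $5\times 4$ binary $2$-covering array, is again equivalent (by Lemma~\ref{Lem:5by4}) to the same matrix. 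The remaining freedom is a choice of how the rows of the bottom block are ordered relative to the fixed top block; this is a finite check. I would then verify that, to satisfy the $3$-covering condition on triples of columns that straddle column $c^1$ (i.e. using $c^1$ and two of $c^2,\dots,c^5$), the only consistent alignment is the one giving the displayed $10\times 5$ matrix, and read off the answer.

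The main obstacle is the bookkeeping in the last step: once both residual blocks are normalised to the Lemma~\ref{Lem:5by4} form, there is still a residual ambiguity coming from automorphisms of that $5\times 4$ matrix and from the relative row-ordering of the bottom block against the top block, and one must check that the global $3$-covering requirement (all $2^3=8$ patterns in every triple of columns, including the triples not involving $c^1$) eliminates every alignment except one. I expect this to reduce to a small finite case analysis — essentially verifying that the rows of the bottom block must realise the "missing" patterns not already covered by the top block — and I would organise it by noting that $\overline{Res(C;c^1=0)}$ and $Res(C;c^1=1)$ together must, for each pair $i<j$ in $\{2,3,4,5\}$, supply all four patterns in coordinates $(i,j)$ restricted to rows with a $0$ in $c^1$ and all four in rows with a $1$; combined with the weight constraints this rigidly determines $C$ up to equivalence, and a direct comparison with the displayed matrix finishes the proof.
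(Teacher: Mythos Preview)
Your approach is exactly the paper's: use Proposition~\ref{Prop:weight} with $CAN(2,4,2)=5$ to force $wt(c^i)=5$, normalise $c^1=(1^50^5)^T$, and then apply Lemma~\ref{Lem:5by4} to the two $5\times 4$ residual $2$-covering arrays. There are, however, two slips worth fixing.

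First, a miscount: every column of the matrix in Lemma~\ref{Lem:5by4} has weight exactly $3$, not $(3,3,3,2)$. Consequently, once you complement columns of $C$ so that each column of $Res(C;c^1=1)$ has weight $3$ (this is possible because in any $5\times 4$ binary $2$-covering array every column has weight $2$ or $3$), the weight constraint $wt(c^i)=5$ forces every column of $Res(C;c^1=0)$ to have weight $2$. This is precisely the paper's move, and it removes the asymmetry you were worried about.

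Second, the ``finite case analysis'' you anticipate at the end is in fact empty. With all column weights equal to $3$, Lemma~\ref{Lem:5by4} determines $Res(C;c^1=1)$ up to row and column permutations only (no complementation is needed, since that would change a weight from $3$ to $2$); use those to put the top block in standard form. The bottom block then has all column weights $2$, and the same uniqueness argument shows its columns, viewed as $2$-subsets of the bottom five rows, all share a common row $r$; the remaining four rows carry a single $1$ each, one per column. A permutation of the bottom rows sends $r$ to the last row and realises any assignment of those single $1$'s, so no alignment check against the $3$-covering condition is required: every alignment is already equivalent to the displayed matrix. (Your attempt to normalise the \emph{first row} to all $1$'s is unnecessary and a bit tangled; the clean normalisation is on $c^1$, not on a row.)
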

\begin{proof}
It is known in \cite{Slo} that $CAN(3,5,2) = 10$. Let $C$ be a $10 \times 5$ binary $3$-covering array. Since $CAN(2,4,2) = 5$, it follows from Proposition $\ref{Prop:weight}$ that $wt(c^i)=5$ for each $i$. Without loss of generality, we may assume that $c^{1}= (1^50^5)^T$, where $1^50^5$ means $(1,1,1,1,1,0,0,0,0,0)$. Then, $Res(C;c^{1}=1)$ and $Res(C;c^{1}=0)$ are $5 \times 4$ binary $2$-covering arrays. Since $wt(c^i)=5$ for each $i$, by taking complement of columns of $C$ if necessary, we may assume that every column of $Res(C;c^1=1)$ has weight $3$ and every column of $Res(C;c^1=0)$ has weight $2$. The result follows from Lemma \ref{Lem:5by4}.
\end{proof}

Sloane \cite{Slo} constructed a $12 \times 11$ binary $3$-covering array by using Hadamard matrix as follows: Let $H_{12}$ be a normalized Hadamard matrix of order $12$. It is clear that the $12 \times 11$ matrix $C$ which is obtained from $H_{12}$ by deleting the first column of $H_{12}$ and replacing $-1$'s by $0$'s is a binary $3$-covering array. We now prove that this is essentially unique way to obtain a $12 \times 11$ binary $3$-covering array.

Before starting, we introduce three $6 \times 10$ binary $2$-covering arrays and a $6 \times 4$ binary $2$-covering array.

\begin{eqnarray*}\label{Equation:A}
A \ ={\small \left( \begin{array}{cccccccccc}
1 & 1 & 1 & 1 & 1 & 1 & 1 & 1 & 1 & 1 \\
1 & 1 & 1 & 1 & 0 & 0 & 0 & 0 & 0 & 0  \\
1 & 0 & 0 & 0 & 1 & 1 & 1 & 0 & 0 & 0  \\
0 & 1 & 0 & 0 & 1 & 0 & 0 & 1 & 1 & 0  \\
0 & 0 & 1 & 0 & 0 & 1 & 0 & 1 & 0 & 1  \\
0 & 0 & 0 & 1 & 0 & 0 & 1 & 0 & 1 & 1  \end{array} \right)}, \
B_1={\small \left( \begin{array}{cccccccccc}
1 & 1 & 0 & 0 & 0 & 0 & 1 & 1 & 0 & 1  \\
1 & 0 & 1 & 0 & 1 & 0 & 0 & 0 & 1 & 1  \\
1 & 0 & 0 & 1 & 0 & 1 & 0 & 1 & 1 & 0  \\
0 & 1 & 1 & 0 & 0 & 1 & 1 & 0 & 1 & 0  \\
0 & 1 & 0 & 1 & 1 & 1 & 0 & 0 & 0 & 1  \\
0 & 0 & 1 & 1 & 1 & 0 & 1 & 1 & 0 & 0  \end{array} \right)},
\end{eqnarray*}
\begin{eqnarray}
B_2 ={\small \left( \begin{array}{cccccccccc}
1 & 1 & 0 & 0 & 0 & 1 & 0 & 0 & 1 & 1  \\
1 & 0 & 1 & 0 & 0 & 0 & 1 & 1 & 1 & 0  \\
1 & 0 & 0 & 1 & 1 & 0 & 0 & 1 & 0 & 1  \\
0 & 1 & 1 & 0 & 1 & 0 & 1 & 0 & 0 & 1  \\
0 & 1 & 0 & 1 & 0 & 1 & 1 & 1 & 0 & 0  \\
0 & 0 & 1 & 1 & 1 & 1 & 0 & 0 & 1 & 0  \end{array} \right)}, \
D \ ={\small \left( \begin{array}{cccc}
1 & 1 & 0 & 0  \\
1 & 0 & 1 & 0  \\
1 & 0 & 0 & 1  \\
0 & 1 & 1 & 0  \\
0 & 1 & 0 & 1  \\
0 & 0 & 1 & 1  \end{array} \right)}.
\end{eqnarray}

By Theorem \ref{Thm:Sperner} and Corollary \ref{Cor:max2ca1}, we note that the three $6 \times 10$ binary $2$-covering arrays are equivalent.

\begin{theorem}{\label{Thm:12by11}}
There is a unique $12 \times 11$ binary $3$-covering array up
to equivalence.
\end{theorem}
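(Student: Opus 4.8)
The plan is to exploit the structure forced by the earlier results exactly as in the $10\times 5$ case, but one level up. Since $CAN(3,11,2)=12$ (from the Hadamard construction of Sloane together with a matching lower bound), any $12\times 11$ binary $3$-covering array $C$ is optimal. First I would apply Proposition \ref{Prop:weight} with $t=3$, $n=11$, $q=2$: it gives $2\cdot CAN(2,10,2)\le wt(c^i)\le 12-CAN(2,10,2)$, and since $CAN(2,10,2)=5$ by Theorem \ref{Thm:Sperner} (as $\binom{9}{4}=126$ degrees are available in size $6$, while degree $10$ needs size $6$), we get $wt(c^i)=6$ for every column. Replacing columns by complements where needed, normalize so that $c^1=(1^6 0^6)^T$. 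Then $Res(C;c^1=1)$ and $Res(C;c^1=0)$ are both $6\times 10$ binary $2$-covering arrays; since every column of $C$ has weight $6$, each column of $Res(C;c^1=1)$ has some weight $a$ and the corresponding column of $Res(C;c^1=0)$ has weight $6-a$, with $a$ between $1$ and $5$ (weights $0$ or $6$ would violate the $2$-covering property together with $c^1$). By Theorem \ref{Thm:Sperner} and Corollary \ref{Cor:max2ca1}, both $6\times 10$ arrays are equivalent to the standard maximal $2$-covering array of size $6$, whose columns all have weight $3$; hence in fact $a=3$ for every column, i.e. every column of $Res(C;c^1=1)$ has weight $3$.

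Next I would fix coordinates so that $Res(C;c^1=1)$ is literally the matrix $A$ displayed above (this is legitimate: $A$ is the standard maximal $6\times 10$ array in a convenient presentation, and we may permute the $10$ columns and the bottom $6$ rows and invert columns to reach it — though column inversions are constrained by the requirement that weights stay $3$, which is exactly why the three representatives $A,B_1,B_2$ were introduced). With the top block fixed as $A$, the bottom block $Res(C;c^1=0)$ is a $6\times 10$ binary $2$-covering array all of whose columns have weight $3$, and whose $j$-th column is disjoint from... no: it need only satisfy, for each pair of columns $j,k$, that the $12\times 2$ submatrix on columns $j,k$ hits all four patterns $00,01,10,11$ in \emph{every} choice of the remaining $9$ columns being pinned — that is, $C$ restricted to any three columns $c^1,c^j,c^k$ covers all $8$ binary triples. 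Since the $c^1=1$ rows already give the four patterns on $(c^j,c^k)$ coming from $A$, the real constraint is that the $c^1=0$ block, i.e. the bottom $6\times 10$ array $B$, must supply, for every pair $j<k$, the complementary information needed so that columns $j,k,\ell$ of $C$ cover all $8$ triples for every third column $\ell$. Unwinding this, $B$ must be a $6\times 10$ binary $2$-covering array such that the $12\times 10$ array $\binom{A}{B}$ is a $3$-covering array. I would show, by a counting/parity argument on the $6\times 10$ possibilities and the observation that $B$ is forced to be equivalent to $A$ (all weight-$3$ columns, maximal), that up to the residual symmetries $B$ must be one of exactly the three arrays $A$, $B_1$, $B_2$ — or, after accounting for all symmetries, that all resulting $C$ are equivalent to each other and to Sloane's Hadamard array.

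The main obstacle will be the middle step: showing that once $Res(C;c^1=1)=A$ is fixed, the bottom block $B$ is determined up to equivalence. This is a finite but delicate case analysis — the point of listing $A$, $B_1$, $B_2$, and $D$ is presumably that the allowable completions split into a small number of cases governed by how the weight-$3$ structure of $B$ must interlock with $A$ to kill all bad triples, and $D$ handles the degenerate sub-pattern. I would organize this by: (i) using Corollary \ref{Cor:max2ca1} to write $B=P\,\sigma(A)$ for a permutation $P$ of rows and a column permutation $\sigma$ (no column inversions, since all weights are $3$ on both sides); (ii) translating ``$\binom{A}{B}$ is $3$-covering'' into a system of non-containment and intersection conditions between columns of $A$ and columns of $B$; (iii) reducing the number of admissible $(\,P,\sigma)$ to the three representatives by direct verification, handling the overlap structure via $D$; and (iv) checking that each representative gives an array equivalent to the Hadamard one by exhibiting an explicit equivalence or by invoking Proposition \ref{Prop:rowdistance} to rule out inequivalence. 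The routine verification that each displayed array is genuinely a $3$-covering array I would relegate to a remark or leave to the reader.
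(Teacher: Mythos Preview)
Your overall architecture is right --- normalize weights, fix the top residual to the standard maximal $6\times 10$ array $A$, and then pin down the bottom residual --- and that is exactly how the paper proceeds. But there are two problems, one small and one substantive.

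First, a slip: $CAN(2,10,2)=6$, not $5$ (size $5$ only reaches degree $\binom{4}{1}=4$), and in Proposition~\ref{Prop:weight} the lower factor is $q-1=1$, not $2$. With the correct value you do get $6\le wt(c^i)\le 6$, so the conclusion survives.

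The substantive gap is that you never invoke the pairwise distance constraint $d(c^i,c^j)=6$ for \emph{all} $i\neq j$, which is Lemma~\ref{Lem:dist} (applied with $m=6$, $n=10$). That lemma is obtained by looking at $Res(C;c^i=1)$ for \emph{every} column $c^i$, not just $c^1$, and applying Theorem~\ref{Thm:equi} each time; it is precisely this global constraint that collapses the case analysis. The paper uses $wt(c^i)=6$ together with $d(c^i,c^j)=6$ to force the first four columns of $Res(C;c^1=0)$ to be row-equivalent to the block $D$, and then the same two constraints (plus the $3$-covering property) leave only $B_1$ or $B_2$ as completions; an explicit sequence of row/column swaps shows those two give equivalent arrays. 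Your plan instead tries to run the analysis from the bare condition ``$\binom{A}{B}$ is $3$-covering'' via intersection conditions on $(P,\sigma)$; without the distance rigidity this search space is far larger than you suggest, and in particular $A$ itself does \emph{not} occur as a valid bottom block (only $B_1$ and $B_2$ do), so your list of candidate representatives is already off. If you add the single line ``by Lemma~\ref{Lem:dist}, $d(c^i,c^j)=6$ for all $i\neq j$'' and use it, your sketch becomes the paper's proof; alternatively, as the paper remarks, one can bypass the case analysis entirely by observing that the $\pm1$ version of $C$ with an all-ones column adjoined is a Hadamard matrix of order $12$, which is unique.
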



\begin{proof}

Since $CAN(2,10,2) = 6$, $CAN(3,11,2) \geq 12$ by Theorem \ref{Thm:Roux}. Hence $CAN(3,11,2) = 12$. Let $C$ be a $12 \times 11$ binary $3$-covering array. By Proposition \ref{Prop:weight} and Lemma \ref{Lem:dist}, $wt(c^i) = 6$ and $d(c^i, c^j) = 6$ for $1 \leq i \neq j \leq 11$. By the definition of equivalence, we may assume that the first column $c^1$ of $C$ is $c^1 = (1^6 0^6)^T $. Then $Res(C;c^1 = 1)$ and $Res(C;c^1 = 0)$ both are $6 \times 10$ binary $2$-covering arrays. By Corollary \ref{Cor:max2ca1}, we may assume that $Res(C;c^1 = 1)$ is the standard maximal binary $2$-covering array, thus $Res(C;c^1 = 1) = A$, where $A$ is given in Equation (\ref{Equation:A}).
Hence $C$ is of the form;

\begin{eqnarray}\label{Equation:C}
C = {\small \left(
\begin{array}{c|c}

\textbf{1} & Res(C;c^1 = 1) = A        \\
 \hline
\textbf{0} &  \ Res(C;c^1 = 0)

\end{array} \right)},
\end{eqnarray}
where $\textbf{1}$ and $\textbf{0}$ are all $1$'s and all $0$'s column vectors of length $6$, respectively.

Since $wt(c^i) = 6$ and $d(c^i, c^j) = 6$ for $1 \leq i \neq j \leq 11$, the first four columns of $Res(C;c^1 = 0)$ is row equivalent to $D$, which is given in Equation (\ref{Equation:A}).

Using $wt(c^i) = 6$, $d(c^i,c^j)=6$, and the definition of a binary $3$-covering array, it can be easily shown  that
$Res(C;c^1 = 0)$ is row equivalent to $B_1$ or $B_2$, where $B_1$ and $B_2$ are given in Equation (\ref{Equation:A}). Let $C_1$ and $C_2$ be the $3$-covering matrices by putting $Res(C;c^1=0)= B_1$ and $Res(C;c^1=0)= B_2$ in Equation (\ref{Equation:C}), respectively.
Then, it is enough to show that $C_1$ and $C_2$ are equivalent. We can transform $C_1$ into $C_2$ by the following  series of operations:\\
(1) permutation of $8$th row and $9$th row.\\
(2) permutation of $10$th row and $11$th row.\\
(3) permutation of $5$th column and $6$th column.\\
(4) permutation of $5$th row and $6$th row.\\
(5) permutation of $8$th column and $9$th column.\\
(6) permutation of $10$th column and $11$th column.

\end{proof}

By a similar method to the proof in Theorem \ref{Thm:12by11} and using \textrm{Table 1}, we can classify the number of non-equivalent covering arrays satisfying $CAN(3,n,2)=12$ for $6 \leq n \leq 11$:

$$
\begin{array}{cccccccccccccccc}
n & \vline & 6 & 7 & 8 & 9 & 10 & 11\\
\hline
CA(12;3,n,2) & \vline & 9 & 2 & 2 & 1 & 1 & 1
\end{array}
$$
$$
\textrm{Table } 3: \textrm{The number of non-equivalent covering arrays }CA(12;3,n,2).
$$

Colbourn et al. \cite{Col} have already obtained Table 3 by a computer search.

\begin{remark}
We will give a simple proof of Theorem \ref{Thm:12by11} by using the uniqueness of Hadamard matrix of order $12$: Let $C$ be a $12 \times 11$ binary $3$-covering array. By Proposition \ref{Prop:weight} and Lemma \ref{Lem:dist}, $wt(c^i) = 6$ and $d(c^i, c^j) = 6$ for $1 \leq i \neq j \leq 11$. Let $B$ be the $12 \times 12$ matrix obtained from $C$ by adding all $1$ column and replacing $0$'s by $-1$'s. Then $B$ is a Hadamard matrix of order $12$. Hence, Theorem \ref{Thm:12by11} follows from the uniqueness of Hadamard matrix of order $12$.

Although this method is simpler than the proof in Theorem \ref{Thm:12by11} in this case,
we generally use the method in the proof of Theorem \ref{Thm:12by11} when we study the structures of covering arrays.
\end{remark}

\begin{theorem}{\label{Thm:24by12}}
There is a unique $24 \times 12$ binary $4$-covering array up to
unique.
\end{theorem}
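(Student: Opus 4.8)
The plan is to follow the pattern of the proof of Theorem \ref{Thm:12by11}: peel off the first column, use the already established uniqueness of the $12\times 11$ binary $3$-covering array for the top and bottom residuals, and then show that the strength-$4$ condition pins down the bottom residual. First I would record the parameters. By the first inequality of Theorem \ref{Thm:Roux} together with Theorem \ref{Thm:12by11}, $CAN(4,12,2)\geq 2\,CAN(3,11,2)=24$; on the other hand a $24\times 12$ binary $4$-covering array exists (if $M$ denotes the standard $12\times 11$ binary $3$-covering array obtained from $H_{12}$, one checks that $\left(\begin{smallmatrix}\textbf{1} & M\\ \textbf{0} & \overline M\end{smallmatrix}\right)$ has strength $4$, and such an array is in any case recorded in Colbourn et al. \cite{Col}), so $CAN(4,12,2)=24$.

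Now let $C$ be any $24\times 12$ binary $4$-covering array. Since $CAN(3,11,2)=12$, Proposition \ref{Prop:weight} forces $wt(c^i)=12$ for every column $c^i$. For $i\neq j$ the residual $Res(C;c^i=1)$ is a $12\times 11$ binary $3$-covering array by Proposition \ref{Prop:residual}, hence equivalent to the standard one by Theorem \ref{Thm:12by11}; as in the proof of that theorem every column of the standard array has weight $6$, so the column of $Res(C;c^i=1)$ induced by $c^j$ has weight $6$, and together with $wt(c^j)=12$ this yields $d(c^i,c^j)=12$ for all $i\neq j$. After a permutation of rows and columns I may assume $c^1=(1^{12}0^{12})^T$, so that $Res(C;c^1=1)$ and $Res(C;c^1=0)$ are both $12\times 11$ binary $3$-covering arrays, hence both equivalent to the standard one by Theorem \ref{Thm:12by11}. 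Using column permutations, column inversions (each of which keeps a column weight equal to $12$), and a permutation of the first $12$ rows, I may further arrange that
$$
C=\left(\begin{array}{c|c}\textbf{1} & M\\ \hline \textbf{0} & N\end{array}\right),
$$
where $M$ is the standard $12\times 11$ binary $3$-covering array and $N$ is some $12\times 11$ binary $3$-covering array; the only equivalences still at my disposal are permutations of the last $12$ rows and the column permutations and inversions that stabilize $M$.

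Since $Res(C;c^1=1)=M$ and $Res(C;c^1=0)=N$ are $3$-covering, the strength-$4$ conditions on $4$-subsets of columns containing $c^1$ are automatic, so everything reduces to the conditions on $4$-subsets $\{i,j,k,l\}\subseteq\{2,\ldots,12\}$: the patterns realized by columns $i,j,k,l$ of $M$ together with those realized by columns $i,j,k,l$ of $N$ must exhaust $\{0,1\}^4$. I would first constrain $N$ column by column, using that for each $i\geq 2$ and $b\in\{0,1\}$ the residual $Res(C;c^1=0,c^i=b)$ is a $6\times 10$ binary $2$-covering array and hence, by Corollary \ref{Cor:max2ca1}, the standard maximal one; then, tracking which of the $16$ patterns each $4$-subset of columns of $M$ (respectively $N$) realizes — exactly as $Res(C;c^1=0)$ was determined in the proof of Theorem \ref{Thm:12by11} — I would show that $N$ must agree with $\overline M$ up to the remaining symmetries, and finally that the finitely many resulting candidates are pairwise equivalent via explicit sequences of row and column permutations and inversions.

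I expect the last reduction to be the main obstacle: showing that, once $M$ has been put in standard form, the collection of all $4$-subset conditions leaves no freedom for $N$ beyond permuting the bottom rows and applying the automorphisms of $M$. The structural fact that should make the bookkeeping tractable is that for any four columns of the standard $M$ each of the $16$ sign patterns occurs $0$, $1$, or $2$ times, and no complementary pair of patterns is simultaneously absent; this makes $N=\overline M$ a valid completion and, pushed a little further, the only one up to the residual symmetries. In practice this step is likely either an organized case analysis over the weight-$6$ columns of $N$ constrained by the standard $6\times 10$ residuals, or a short exhaustive verification of the type carried out by Colbourn et al. \cite{Col}.
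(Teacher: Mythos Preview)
Your setup through the decomposition $C=\left(\begin{smallmatrix}\textbf{1}&M\\ \textbf{0}&N\end{smallmatrix}\right)$ matches the paper, but you stop exactly where the work is, and you say so yourself: pinning down $N$ from the $4$-subset conditions is left as an unspecified case analysis or a computer check. That is a genuine gap. The observation that each $Res(C;c^1=0,c^i=b)$ is a standard $6\times 10$ array only tells you the multiset of rows of $N$ restricted to $10$ columns at a time, up to an unknown row permutation; it does not by itself link $N$ to $M$, and the automorphism group of $M$ (essentially that of $H_{12}$) is large enough that ``$N=\overline M$ up to residual symmetries'' is not obvious from these constraints alone.

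The idea you are missing, and which the paper uses, is to peel off a \emph{second} column and apply Theorem~\ref{Thm:12by11} again to a residual that straddles the top and bottom halves. After normalizing $c^1$ and $c^2$ and fixing $Res(C;c^1=1,c^2=1)=A$, $Res(C;c^1=1,c^2=0)=B_1$, the paper observes that $Res(C;c^2=1)$ is also a $12\times 11$ binary $3$-covering array; uniqueness then forces $Res(C;c^1=0,c^2=1)\in\{B_1,B_2\}$ \emph{explicitly}, with no free row permutation left. The first $6\times 4$ block of $Res(C;c^1=0,c^2=0)$ is then determined by the $4$-covering condition on columns $3,4,5,6$, the candidate $B_1$ is killed by a single distance computation ($d(c^3,c^7)=14$), and in the surviving case $Res(C;c^1=0,c^2=0)=\overline{A}$. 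Using Theorem~\ref{Thm:12by11} on $Res(C;c^2=1)$, not just on $Res(C;c^1=0)$, is what collapses your open-ended bookkeeping to a two-case check.
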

\begin{proof}
Since $CAN(3,11,2) = 12$, $CAN(4,12,2) \geq 24$ by Theorem \ref{Thm:Roux}. We will show that $CAN(4,12,2) = 24$ and $24 \times 12$ binary $4$-covering arrays are uniquely determined. Let $C$
be a $24 \times 12$ binary $4$-covering array. By Proposition $\ref{Prop:weight}$ and Theorem \ref{Thm:12by11}, $wt(c^i)=12$ and $d(c^i, c^j) = 12$ for $1 \leq i \neq j \leq 12$.  By the definition of equivalence, we may assume that the first and second column $c^1$ and $c^2$ of $C$ are $c^1 = (1^{12} 0^{12})^T$ and $c^2 = (1^60^61^60^6)^T$.

Since $Res(C;c^1 = 1)$ is a $12 \times 11$ binary $3$-covering array, we may assume that $Res(C;c^1 = 1, c^2 = 1) = A$ and $Res(C;c^1 = 1, c^2 = 0) = B_1$ by Theorem \ref{Thm:12by11} and Equation (\ref{Equation:C}), where $A$ and $B_1$ are given in Equation (\ref{Equation:A}). Since $Res(C;c^2 = 1)$ is also a $12 \times 11$ binary $3$-covering array and  $Res(C;c^1 = 1, c^2 = 1) = A$, it should be either $Res(C:c^1 = 0, c^2 = 1) = B_1$ or $Res(C:c^1 = 0, c^2 = 1) = B_2$. Hence $C$ is of the form;

\begin{eqnarray}\label{Equation:24by12}
C = {\small \left(
\begin{array}{c|c|c}

\textbf{1} & \textbf{1} & Res(C;c^1 = 1, c^2 = 1) = A       \\
 \hline
\textbf{1} & \textbf{0} & Res(C;c^1 = 1, c^2 = 0) = B_1      \\
 \hline
\textbf{0} & \textbf{1} & Res(C;c^1 = 0, c^2 = 1) = B_1 \ \mbox{or} \ B_2       \\
 \hline
\textbf{0} & \textbf{0} & Res(C;c^1 = 0, c^2 = 0)

\end{array} \right)},
\end{eqnarray}
where $\textbf{1}$ and $\textbf{0}$ are all $1$'s and all $0$'s column vectors of length $6$, respectively.

Let $E$ be the first $6 \times 4$ submatrix of $Res(C;c^1 = 0, c^2 =0)$. Since the submatrix $ (c_{ij})_{1 \leq i \leq 24, 3 \leq j \leq 6}$ of $C$ is also a $4$-covering array and $wt(c^i) = 12$ for any column $c^i$ of $C$, the submatrix $E$ is row equivalent to the first $6 \times 4$ submatrix of $\overline{Res(C;c^1 = 1, c^2 = 1)} = \overline{A}$. Hence we may assume that

\begin{eqnarray*}
E \ ={\small \left( \begin{array}{cccc}
0 & 0 & 0 & 0  \\
0 & 0 & 0 & 0  \\
0 & 1 & 1 & 1  \\
1 & 0 & 1 & 1  \\
1 & 1 & 0 & 1  \\
1 & 1 & 1 & 0  \end{array} \right)}.
\end{eqnarray*}

Let $C_1$ be a $4$-covering array with $Res(C;c^1 =0, c^2 = 1) = B_1$ in Equation (\ref{Equation:24by12}). By using the fact that $C$ is a $4$-covering array and $wt(c^7) =12$, the $5$-th column of $Res(C;c^0, c^2 = 0)$ should be $(1,0,1,0,0,1)^T$. Then $d(c^3, c^7) = 14$, which is a contradiction.

Let $C_2$ be a $4$-covering array with $Res(C;c^1 =0, c^2 = 1) = B_2$ in Equation (\ref{Equation:24by12}). By using the fact that $C$ is a $4$-covering array, $wt(c^i) = 12$, and $d(c^i, c^j) = 12$ for $1 \leq i \neq j \leq 12$, it can be shown that $Res(C; c^1 =0, c^2 = 0)$ should be row equivalent to $\overline{Res(C;c^1 = 1, c^2 = 1)} = \overline{A}$.
Thus, $24 \times 12$ binary $4$-covering arrays are uniquely determined.
\end{proof}

\begin{remark}
Colbourn et al. \cite{Col} have also shown that $24 \times 12$ optimal binary $4$-covering arrays are uniquely determined by a computer search.
\end{remark}

We end this section by proving $CAN(5,13,2) \geq 49$.

\begin{theorem}\label{Thm:48by13}
There is no $48 \times 13$ binary $5$-covering array.
\end{theorem}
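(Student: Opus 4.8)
The plan is to suppose a $48\times 13$ binary $5$-covering array $C$ exists and to derive a contradiction from a Fourier (equivalently, inclusion--exclusion) analysis of the pattern counts on six of its columns. Fix six columns, say the first six, and for $w\in\{0,1\}^6$ let $m_w$ be the number of rows of $C$ whose restriction to these six columns is $w$; thus $\sum_w m_w=48$ and $m_w\ge 0$, and since $C$, being a $5$-covering array, is also a $k$-covering array for each $k\le 5$, every cell determined by at most five columns is nonempty. For any three of the six columns and any pattern, the corresponding residual is a $2$-covering array of degree $13-3=10$ by Proposition \ref{Prop:residual}, hence has at least $CAN(2,10,2)=6$ rows; the eight such cells partition the $48$ rows, so each has exactly $6$. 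On the remaining ten columns each such $6$-row cell forms a $6\times 10$ binary $2$-covering array, and since $10>\binom{5}{2}+6-9=7$, Theorem \ref{Thm:equi} forces every one of its columns to have weight $\lfloor 6/2\rfloor=3$; hence splitting a $6$-row cell by a fourth column gives $3+3$, so every cell determined by four of the six columns has exactly $3$ rows. Each such $3$-row cell is a $3\times 9$ $1$-covering array, so every one of its columns has weight $1$ or $2$; therefore every cell determined by five of the six columns has exactly $1$ or $2$ rows, and every cell determined by all six columns --- in particular each $m_w$ --- has at most $2$ rows.

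Next I would carry out Fourier inversion on $\{0,1\}^6\cong\mathbb F_2^6$. With $\widehat m(\beta)=\frac1{64}\sum_w m_w(-1)^{\langle\beta,w\rangle}$, one has for every $T\subseteq\{1,\dots,6\}$ and $v\in\{0,1\}^T$ the identity $\sum_{w|_T=v}m_w=2^{\,6-|T|}\sum_{\supp\beta\subseteq T}(-1)^{\langle\beta,v\rangle}\widehat m(\beta)$. Applying this with $|T|=4$: every such cell sum is the constant $3$, so all its nonconstant Fourier modes vanish, which forces $\widehat m(\mathbf 0)=3/4$ and $\widehat m(\beta)=0$ for all $\beta$ with $1\le|\supp\beta|\le 4$. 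Applying it with $T$ a $5$-subset $S=\{1,\dots,6\}\setminus\{i\}$: only $\beta=\mathbf 0$ and $\beta=\mathbf 1_S$ survive, giving $\sum_{w|_S=u}m_w=\tfrac32+2(-1)^{\langle\mathbf 1_S,u\rangle}\widehat m(\mathbf 1_S)$; as $u$ varies this takes the two values $\tfrac32\pm 2\widehat m(\mathbf 1_S)$, both of which must lie in $\{1,2\}$, so $h_i:=\widehat m(\mathbf 1_{\{1,\dots,6\}\setminus\{i\}})\in\{-1/4,1/4\}$. The only remaining possibly nonzero coefficient is $\eta:=\widehat m(\mathbf 1_{\{1,\dots,6\}})$, so inversion gives, using $\langle\mathbf 1_{\{1,\dots,6\}\setminus\{i\}},w\rangle=|w|-w_i$,
\[
m_w=\tfrac34+\eta\,(-1)^{|w|}+(-1)^{|w|}\sum_{i=1}^{6}h_i(-1)^{w_i}\qquad\text{for all }w\in\{0,1\}^6 .
\]

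To finish, choose $u\in\{0,1\}^6$ with $u_i=0$ when $h_i=1/4$ and $u_i=1$ when $h_i=-1/4$; then $h_i(-1)^{u_i}=1/4$ for every $i$, so $\sum_i h_i(-1)^{u_i}=3/2$, while for the complement $\bar u$ one has $h_i(-1)^{\bar u_i}=-1/4$ and $\sum_i h_i(-1)^{\bar u_i}=-3/2$. Since $|\bar u|=6-|u|$ gives $(-1)^{|\bar u|}=(-1)^{|u|}$, the displayed formula yields $m_u-m_{\bar u}=(-1)^{|u|}\bigl(\tfrac32-(-\tfrac32)\bigr)=\pm 3$, which contradicts $m_u,m_{\bar u}\in\{0,1,2\}$. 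Hence no $48\times 13$ binary $5$-covering array exists; together with Roux's bound $CAN(5,13,2)\ge 2\,CAN(4,12,2)=48$ (Theorem \ref{Thm:Roux}) this gives $CAN(5,13,2)\ge 49$.

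The substantive part is the first paragraph, especially the equality ``every four-column cell has exactly $3$ rows'': this rests on Theorem \ref{Thm:equi} forcing every column of a $6\times 10$ binary $2$-covering array to have weight $3$, and it is what makes the Fourier support of $m$ thin enough to determine $m$ up to the single constant $\eta$, which the final $\pm 3$ identity then rules out. The Fourier inversion itself is routine bookkeeping; a reader preferring to avoid it can instead iterate the elementary identity $m_w+m_{w\oplus e_i}+m_{w\oplus e_j}+m_{w\oplus e_i\oplus e_j}=3$ that the four-column cell counts provide.
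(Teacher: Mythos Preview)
Your proof is correct and takes a genuinely different route from the paper's.

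The paper argues by iterating the uniqueness theorems of Section~5: since $CAN(4,12,2)=24$, each column of a putative $48\times 13$ array has weight $24$; the residuals $Res(C;c^1=1)$, $Res(C;c^2=1)$, $Res(C;c^3=1)$ are each $24\times 12$ binary $4$-covering arrays, hence by Theorem~\ref{Thm:24by12} equivalent to the unique such array. Normalising these forces the eight blocks $Res(C;c^1=\varepsilon_1,c^2=\varepsilon_2,c^3=\varepsilon_3)$ to be specific $6\times 10$ matrices, and the resulting $Res(C;c^3=1)$ fails to be the unique $24\times 12$ array, a contradiction. So the paper's argument climbs the ladder $CA(6;2,10,2)\to CA(12;3,11,2)\to CA(24;4,12,2)$ of uniqueness results and shows the next rung collapses.

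Your argument bypasses the uniqueness theorems of Section~5 entirely. You only use Theorem~\ref{Thm:equi} (for $m=6$, $n=10$) to force every column of each $6\times 10$ residual to have weight exactly $3$, which pins the $4$-column cell counts at $3$; the Fourier computation then shows the $6$-column pattern counts cannot all lie in $\{0,1,2\}$. This is more self-contained and more elementary: it needs neither Theorem~\ref{Thm:12by11} nor Theorem~\ref{Thm:24by12}, and in fact it never looks at more than six columns at a time beyond the blanket covering property. The paper's approach, on the other hand, yields structural information along the way (the uniqueness of the $24\times 12$ array is of independent interest) and makes explicit why the construction cannot be pushed one step further.
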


\begin{proof}
Since $CAN(4,12,2) = 24$, $CAN(5,13,2) \geq 48$ by Theorem \ref{Thm:Roux}. Let $C$ be a $48 \times
13$ binary $5$-covering array. By Proposition \ref{Prop:weight} and Theorem \ref{Thm:24by12},
$wt(c^i)=24$ and $d(c^i, c^j) = 24$ for $1 \leq i \neq j \leq 13$. Since $Res(C;c^1=1)$ is
a $24 \times 12$ binary $4$-covering array, we may assume that
$Res(C;c^1=1,c^2=1,c^3=1) = A$, $Res(C;c^1=1,c^2=1,c^3=0)=B_1$,
$Res(C;c^1=1,c^2=0,c^3=1)=B_2$, and
$Res(C;c^1=1,c^2=0,c^3=0)=\overline{A}$ by Theorem \ref{Thm:24by12}. Since $Res(C;c^2=1)$ is also a $24 \times 12$ binary $4$-covering array, we may assume that
$Res(C;c^1=0,c^2=1,c^3=1)=B_2$ and
$Res(C;c^1=0,c^2=1,c^3=0)=\overline{A}$. Hence $C$ is of the form;

\begin{eqnarray}\label{Equation:24by12}
C = {\small \left(
\begin{array}{c|c|c|c}

\textbf{1} & \textbf{1} & \textbf{1} & Res(C;c^1 = 1, c^2 = 1, c^3 = 1) = A       \\
 \hline
\textbf{1} & \textbf{1} & \textbf{0} & Res(C;c^1 = 1, c^2 = 1, c^3 = 0) = B_1       \\
 \hline
\textbf{1} & \textbf{0} & \textbf{1} & Res(C;c^1 = 1, c^2 = 0, c^3 = 1) = B_2       \\
 \hline
\textbf{1} & \textbf{0} & \textbf{0} & Res(C;c^1 = 1, c^2 = 0, c^3 = 0) = \overline{A}       \\
 \hline
\textbf{0} & \textbf{1} & \textbf{1} & Res(C;c^1 = 0, c^2 = 1, c^3 = 1) = B_2      \\
 \hline
\textbf{0} & \textbf{1} & \textbf{0} & Res(C;c^1 = 0, c^2 = 1, c^3 = 0) = \overline{A}      \\
 \hline
\textbf{0} & \textbf{0} & \textbf{1} &    \\
\hline
\textbf{0} & \textbf{0} & \textbf{0} &
\end{array} \right)},
\end{eqnarray}
where $\textbf{1}$ and $\textbf{0}$ are all $1$'s and all $0$'s column vectors of length $6$, respectively.

By Theorem \ref{Thm:24by12}, $Res(C;c^3=1)$ can not be a $24 \times 12$ binary $4$-covering array. It is a contradiction.
\end{proof}


\begin{thebibliography}{99}
\bibitem{Bie}
J. Bierbrauer and H. Schellwat, Almost independent and weakly biased arrays : efficient constructions and cryptologic applications, Advances in Crptology, CRYPTO 2000, Lecture notes in Computer Science, 2000, 533--543.

\bibitem{Boy}
J. Boyar, G. Brassard, and R. Peralta, Subquadratic zero-knowledge, Journal of the ACM, 42, 1995, 1169--1193.

\bibitem{Bra}
G. Brassard, C. Cr\'{e}peau, and M. Santha, Oblivious transfers and intersecting codes, IEEE Trans. Inform. Theory, 42, 1996, 1769--1780.

\bibitem{Car}
J. Carter and M. Wegman, Universal classes of hash functions, J. Computer and System Sci., 18, 1979, 143--154.


\bibitem{Coh}
D. M. Cohen, S. R. Dalal, M. L. Fredman, and G. C. Patton, The AETG
system: an approach to testing software based on combinatorial
design, IEEE Trans. Software Engineering, 23, 1997, 437--444.

\bibitem{Coh2}
D. M. Cohen, S. R. Dalal, J. Parelius, and G. C. Patton, The
combinatorial design approach to automatic test generation, IEEE
Software, 13, 1996, 83--88.

\bibitem{Coh3}
G. D. Cohen and G. Z\'{e}mor, Intersecting codes and independent families, IEEE Trans. Inform. Theory, 40, 1994, 1872--1881.

\bibitem{Col}
C. J. Colbourn, G. K\'{e}ri, P. P. Rivas Soriano, and J. -C. Schlage-Puchta, Covering and radius-covering arrays: Constructions and classification, Discrete Math. 158, 2010, 1158--1180.

\bibitem{Gal}
A. Gal, A charaterization of span program size and improved lower bounds for monotone span program, In 13th Symposium of the Theory of Computing, 1998, 429--437.

\bibitem{Hil}
A. J. W. Hilton and E. C. Milner, Some intersction theorems for
systems of finite sets, Quart. J. Math. Oxford(2), 18, 1967,
369--384.

\bibitem{Joh}
K. A. Johnson and R. Entringer, Largest induced subgraphs of the
n-cube that contain no 4-cycles, J. Combin. Theory. Series B. 46,
1989, 346--355.

\bibitem{Kap}
S. Kapralov, The nonexistence of the $(21,11,2,2)$ superimposed codes, Fifth International Workshop on Optimal Codes and Related Topics, 2007, 100--104.

\bibitem{Kat}
G. O. H. Katona, Two applications (for search theory and truth
functions) of Sperner type theorems, Periodica Math. Hung. 3, 1973,
19--26.

\bibitem{Kle}
D. J. Kleitman and J. Spencer, Families of k-independent sets,
Discrete Math. 6, 1973, 255--262.

\bibitem{Kim}
H. K. Kim and V. Lebedev, On optimal superimposed codes, J. Combin.
Des. 12, 2004, 79--91.

\bibitem{Lin}
J. H. van Lint and R. M. Wilson, A course in combinatorics,
Cambridge University Press, Cambridge 2001.

\bibitem{Nao}
M. Naor and O. Reingold, On the construction of pseudo-random permutations : Ludy-Rackoff revisited. STOC, 1997, 189--199.

\bibitem{Nur}
K. J. Nurmela, Upper bounds for covering arrays by tabu search,
Discrete Math. 138, 2004, 143--152.

\bibitem{Oh}
D. Y. Oh, A classification of the structrures of some sperner
families and superimposed codes, Discrete Math. 306, 2006,
1722--1731.

\bibitem{Re}
A. R\'{e}nyi, Foundations of Probability, John Wiley and Sons, Inc., New York 1971.



\bibitem{Rou}
G. Roux, k-$propri\acute{e}t\acute{e}s$ dans des tableaux de n
colonnes ; cas particulier de la k-$surjectivit\acute{e}$ et de la
k-$permutativit\acute{e}$, Ph.D.Dissetation, University of Paris 6,
March 1987.

\bibitem{Slo}
N. J. A. Sloane, Covering arrays and intersecting codes, J. Combin.
Des. 1, 1993, 51--63.


\bibitem{Ste}
B. Stevens, L. Moura and E. Mendelsohn, Lower bounds for transversal covers, Des. Codes Cryptogr. 15, 1999, 279--299.

\bibitem{Ste2}
B. Stevens, Transversal covers and packings, PhD thesis, University of Toronto
, 1998.

\bibitem{Ton}
A.-I. Tong, Y.-G. Wu, and L.-D. Li, Room-temperature phosphorimetry
studies of some addictive drugs following dansyl chloride labelling,
Talanta, 43, 1996, 1429--1436.

\end{thebibliography}
\end{document}